\theoremstyle{plain}
\newtheorem{theo}{Theorem}
\newtheorem*{theo*}{Theorem}
\newtheorem{assume}[theo]{Assumption}
\newtheorem{coro}[theo]{Corollary}
\newtheorem{proposition}[theo]{Proposition}
\newtheorem*{theoremA}{Theorem A}
\newtheorem*{theoremB}{Theorem B}
\newtheorem*{theoremC}{Theorem C}
\newtheorem*{theoremD}{Theorem D}
\newtheorem{lemma}[theo]{Lemma}
\theoremstyle{definition}
\newtheorem{define}[theo]{Definition}
\newtheorem{remark}[theo]{Remark}
\newcommand{\ab}{\text{ab}}
\newcommand{\ep}{\varepsilon}
\newcommand{\IR}{\mathbb{R}}
\newcommand{\IZ}{\mathbb{Z}}
\newcommand{\IC}{\mathbb{C}}
\newcommand{\IN}{\mathbb{N}}
\newcommand{\p}{\partial}
\newcommand{\tr}{\mathrm{tr}}
\newcommand{\Tr}{\mathrm{Tr}}
\newcommand{\pp}{\rho}
\title{Instantons on multi-Taub-NUT Spaces I:\\
 Asymptotic Form and  Index Theorem}
\author[*]{Sergey A. Cherkis}
\author[**]{Andr\'es Larra\'in-Hubach}
\author[$\star$]{Mark Stern}
\affil[*]{\small Department of Mathematics, University of Arizona, Tucson, AZ 85721-0089}
\affil[ ]{\small\tt cherkis@math.arizona.edu}
\affil[**]{\small Department of Mathematics, University of Dayton, Dayton, OH 45469}
\affil[ ]{\small\tt alarrainhubach1@udayton.edu}
\affil[$\star$]{\small Department of Mathematics, Duke University, Durham, NC 27708-0320}
\affil[ ]{\small\tt stern@math.duke.edu}
\begin{document}

\begin{titlepage}

\renewcommand{\thepage}{ }
\date{}  

\maketitle

\abstract{
We study finite action anti-self-dual Yang-Mills connections on the multi-Taub-NUT space.   
 Under a technical assumption of generic asymptotic holonomy, we establish the curvature and the harmonic spinor decay rates and compute the index of the associated Dirac operator.   

This is the first in a series of papers proving the completeness of the bow construction of instantons on multi-Taub-NUT spaces and exploring it in detail. 
}

\end{titlepage}

\tableofcontents

\section{Introduction}
This paper establishes core analytic results needed for proving the completeness of the bow construction \cite{Cherkis:2010bn} of  instantons on Asymptotically Locally Flat (ALF) spaces. In general, for an oriented Riemannian four-manifold $M,$ we call a connection $A$ on a rank $n$ Hermitian bundle $\mathcal{E}\rightarrow M$
  an  {\em instanton} if it has square integrable, anti-self-dual (ASD) curvature $F_A.$  Our focus is on the case when $M$ is a prototypical ALF space: the multi-Taub-NUT space.  The bow construction, just as the AHDM-Nahm construction \cite{Atiyah:1978ri,Nahm1979-2,Nahm:1983sv}, relates instantons on ALF spaces to solutions of a system of nonlinear ordinary differential equations on a collection of line segments and additional linear data on their boundary.  In \cite{Cherkis:2010bn} this data is conveniently organized in terms of a `bow'.  In comparison, the equivariant version of the ADHM construction, studied by Kronheimer and Nakajima \cite{KN}, relates instantons on Asymptotically Locally Euclidean (ALE) spaces to solutions of algebraic equations originating from a quiver.   In both cases, the rank of the bow or quiver system is determined by the dimension of the space of $L^2$ harmonic spinors twisted by an instanton connection on the ALF or ALE space.  In this paper we compute this dimension via an index theorem, as a first step in proving the above correspondence.  Establishing the asymptotic form of the connection is crucial both for the index calculation and for finding the asymptotic form of these harmonic spinors.

In order to establish the asymptotic form of the connection, we  first prove  quadratic decay of  the curvature.  The curvature of the known $U(1)$ instantons on ALF spaces decays quadratically  \cite{Rubak}, so the decay rate we establish is sharp. 
There has been extensive work by numerous authors analyzing the decay rates of the curvature of Yang-Mills and instanton connections. Uhlenbeck \cite{Uhlenbeck1,Uhlenbeck2} showed that a finite energy Yang-Mills connection on $\mathbb{R}^4$ has quartic curvature decay. This result has been generalized in many directions, including \cite{FU}, \cite{IN}, \cite[Sec.4.4.3]{DK}, and \cite{Rade}. In particular, Groisser and Parker \cite{Groisser} extend the quartic decay to  asymptotically flat spaces (as defined in \cite[Sec.1]{Groisser}), and their proof readily extends to ALE spaces.

The decay rate is sensitive to the asymptotic form of the metric, especially to the asymptotic volume growth.   For example, Mochizuki \cite{Mochizuki} studied the case of doubly periodic instantons, proving $|F|=O\left(\frac{1}{r^{1+\epsilon}}\right).$  Here $r$ denotes the distance to the origin in $\mathbb{R}^2$ pulled back to $\IR^2\times S^1\times S^1.$  A case  very closely related to the one we consider here is that of a monopole on $\mathbb{R}^3$, for which  Jaffe and Taubes  \cite[Thm. 10.5]{JT} and Jarvis \cite[Thm. 19]{Jarvis}\footnote{We thank  \'Akos Nagy for pointing out this reference.} proved quadratic decay of the curvature.

Proving quadratic decay for ALF spaces is more delicate than proving quartic decay in the ALE case.  To see this, consider the ordinary differential equation on $[1,\infty)$:
$$\left(-\frac{d^2}{dr^2} + \frac{p}{r^2}\right)y = 0.$$
The solutions of this equation are $c_1r^{\frac{1}{2}+\sqrt{p+\frac{1}{4}}}+ c_2r^{\frac{1}{2}-\sqrt{p+\frac{1}{4}}},$ and the decay rate is determined by the coefficient $p$. The analogous equation for Yang-Mills equations is the Bochner formula
$$\nabla^*\nabla F_A - \epsilon^i\epsilon_j^*{\cal R}_{ij}F_A - \epsilon^i\epsilon_j^*[F_{ij},F_A]=0.$$
Here ${\cal R}$ denotes the Riemann curvature, which decays cubically for ALF spaces, and therefore will not critically affect our estimates. The $\mathrm{ad}(F_{ij})$ term, however, is analogous to the $\frac{p}{r^2}$ term, with $p$ completely unknown. Hence, the unknown magnitude of the curvature, which we wish to bound, appears to play an important role in establishing bounds.  
Terms with faster than quadratic decay are negligible in such analyses, making the ALE case significantly simpler. In particular, the Sobolev and Hardy inequalities for ALE spaces are stronger than for ALF spaces. These inequalities, coupled to a Moser iteration argument, readily imply that instantons on ALE spaces decay faster than $r^{-q}$ for some $q>2$. Once such decay is demonstrated, the ALE decay problem is effectively linearized and readily solved. 

 As proved in \cite{Minerbe10,ChenChen} ALF spaces whose Riemann curvature decays faster than quadratically are asymptotic to the multi-Taub-NUT metric described below. In this paper, we therefore focus our attention on the case of the manifold $M$ a $k$-centered Taub-NUT~(TN$_k$) space \cite{TNUT,Newman:1963yy,HAWKING197781,Gibbons:1979zt}; the general ALF (with faster than quadratic  Riemann curvature decay) case then follows from essentially the same argument. TN$_k$ is a hyperk\"ahler four-manifold with a triholomorphic isometric circle action, which has $k$ fixed points, $\{\nu_1,\ldots,\nu_k\}$.  The quotient of TN$_k$ by this circle action is $\IR^3$, and the quotient map  
$\pi_k: \mathrm{TN}_k\to \IR^3$ defines a principal circle fibration in the complement of the fixed points:
\begin{equation}\label{fiber}S^1\rightarrow \mathrm{TN}_k\setminus\{ \nu_1,\ldots, \nu_k\}\xrightarrow{\pi_k} \mathbb{R}^3\setminus\{\nu_1,\ldots,\nu_k\}.\end{equation}
This  $S^1$ fibration has Hopf number $-1$ over any small sphere centered at any fixed point $\nu_\sigma.$  Over spheres of large radius, this fibration restricts to a circle bundle of Hopf  number $-k$.  
The  circle bundle (\ref{fiber}) admits a connection one-form $\varpi$ with curvature $d\varpi=\pi_k^*(*_{3} dV),$ where $\ast_3$ denotes the Hodge star operator on Euclidean $\IR^3$ and $V$ is the following function on 
$\mathbb{R}^3\setminus\{\nu_1,\ldots,\nu_k\}:$
\begin{align}\label{metricpotential}
V(x)=\ell+\sum_{\sigma=1}^k\frac{1}{2\left|x-\nu_\sigma\right|},\end{align}
here $\ell$ is a real positive constant fixed throughout this paper. 
  The hyperk\"ahler metric on TN$_k$ has the following form found by Gibbons and Hawking \cite{Gibbons:1979zt}: 
\begin{align}\label{metric}
g = Vdx^2+\frac{\varpi^2}{V}.
\end{align}
In local coordinates $(x,\tau)\in U \times S^1$ with $U\subset\mathbb{R}^3\setminus\{\nu_j\}_{j=1}^k$  an open contractible neighborhood  and $\tau\in[0,2\pi)$ a homogeneous coordinate along the circle fiber, we have  
\begin{align}\label{varpi}
\varpi=d\tau+\pi_k^*\omega,
\end{align} with $\omega$ a one-form   
on $U$ satisfying 
\begin{align}\label{tndef}d\omega=*_{3}dV =-*_{3}\frac{kdr}{2r^2}+O(r^{-3}).
\end{align} 
Here $r=|x|.$ 
(For the sake of brevity, in the rest of the paper we shall use $\omega$ to denote $\pi^*_k(\omega)$ as well, thus writing $\varpi=d\tau+\omega,$ just as we used $V$ for $\pi_k^*(V)$ in Eq.~\eqref{metric} and $\nu_\sigma$ for $\pi_k^*(\nu_\sigma)$ in \eqref{fiber}, hoping this will cause no confusion). 
The most important properties of $\mathrm{TN}_k$ that we use  in this paper are that it is a complete, hyperk\"ahler Riemannian manifold (with an ASD Ricci flat metric) with cubically decaying Riemannian curvature and an asymptotic  tri-holomorphic circle action.  
 
Our analysis assumes a generic asymptotic holonomy condition (see Definition~\ref{genconn} on page~\pageref{genconn}):  there exists a ray in the base $\mathbb{R}^3$ such that the holonomy of the instanton connection around every circle fiber over that ray is generic (i.e. belongs to a common compact subset of  the regular $U(n)$ adjoint orbit). In other words, we assume that the  eigenvalues of the holonomy over some ray in the base $\mathbb{R}^3$ are distinct with the distance between distinct eigenvalues bounded below by a positive constant.  
There are known instantons, such as e.g. those found by Etesi and Hausel in \cite{Etesi:2002cc}, that do not satisfy our assumption.  In fact, these latter have asymptotic holonomy approaching identity at infinity.  We limit ourselves here to the generic case and intend to address the general case in the future. 

Under the generic asymptotic holonomy assumption we prove in Section~\ref{Sec:Instantons} the following:
\begin{theoremA} 
Let $M$ be an ALF space and $A$  an instanton connection with generic asymptotic holonomy. Let $o\in M$, and let $F_A$ denote the curvature of $A$. Then there exists $C>0$ so that $|F_A|(p)\leq \frac{C}{\mathrm{dist}(p,o)^{2}}$ for any $p\in M.$
\end{theoremA}
 
Using this quadratic decay property, we prove in Section~\ref{Sec:Asymp} that the asymptotic form of the instanton connection is that of a direct sum of the known $U(1)$ instantons:
\begin{theoremB}
Let $A$ be an instanton on a Hermitian bundle $\mathcal{E}\rightarrow\mathrm{TN}_k$ with generic asymptotic holonomy. 
Then, outside of a compact set, $\mathcal{E}$ splits as a direct sum of line bundles, and the  connection  has the form  
$$A= \, \mathop{\oplus}_a \left(-i (\lambda_a+\frac{m_a}{2 |x|})\frac{\varpi }{V}+\pi_k^* \eta_a\right)+O(|x|^{-2}),$$ 
with $\lambda_a\in\mathbb{R}$, $m_a\in\IZ$, and $\eta_a$ a connection on a line bundle $W(a)$  (defined in Thm.~\ref{asyholo}) over $\IR^n\setminus K$, for some compact set $K$. Moreover $W(a)$ restricted to $S^2$ has  Chern number $m_a$.    
\end{theoremB}
The fact that all $U(1)$ instantons have this form was proved in \cite{HitchinL2} (see also  \cite{HHM}).

\ 

We fix the orientation of TN$_k$ by setting $d\mathrm{Vol} = V d\mathrm{Vol}_{\IR^3}\wedge \varpi $. With respect to this orientation, the K\"ahler forms are  self-dual, and the Riemann curvature is anti-self-dual.
 Under the Clifford action of the volume form the spin bundle $S\rightarrow\mathrm{TN}_k$ splits as $S=S^-\oplus S^+$ with $S^-$ and $S^+$ denoting respectively, the negative and the positive eigenvalue eigen-bundles of the chirality operator  $\gamma^5:=-c(d\mathrm{Vol}).$  (Here $c$ denotes the Clifford action, and this common sign convention ensures that the chirality operator is compatible with the Hodge star action on two-forms: $\gamma^5 c(\eta)=c(*\eta).$)  Due to hyperk\"ahlerity,  the bundle $S^+$ is trivial.
Let $D_A^{\pm}$ denote the restriction of the Dirac operator $D_A$ to sections of $S^\pm\otimes \mathcal{E}.$ 

In Section~\ref{Sec:HarmSpinors} we prove  decay estimates for the $L^2$ harmonic spinors, which will be needed in our subsequent analysis of the bow construction.  

\begin{theoremC}
Let $A$ be an instanton  on TN$_k$ with generic asymptotic holonomy and $\psi\in \mathrm{Ker}(D_A^-)\cap L^2$, then 
$|\psi|$ decays exponentially if the asymptotic holonomy has no invariant vectors, i.e. if for all $a$, $\frac{\lambda_a}{\ell}\notin\mathbb{Z}$; $|\psi|$  decays quadratically otherwise.
\end{theoremC}
With our orientation and chirality conventions, for an instanton $A$ one has $\mathrm{Ker}(D_A^+)\cap L^2 = \{0\}$. Hence, in order to compute the dimension of $\mathrm{Ker}(D_A)\cap L^2$ it suffices to compute the $L^2$ index of $D_A^-.$  We prove the following index theorem in Section~\ref{Sec:Index}: 

\begin{theoremD}
The $L^2-$index of $D_A^-$ is
\begin{align}
\mathrm{ind}_{L^2} D_A^-=\sum_a\left((\{\frac{\lambda_a}{\ell}\}-\frac{1}{2})(m_a-k\lfloor\frac{\lambda_a}{\ell}\rfloor)-\frac{k}{2}\{\frac{\lambda_a}{\ell}\}^2\right)+\frac{1}{8\pi^2}\int \tr\, F\wedge F,
\end{align}
where $\lfloor x\rfloor$ denotes the largest integer not greater then $x$ and $\{x\}=x-\lfloor x\rfloor.$  
\end{theoremD}

Our techniques for analyzing the decay of harmonic spinors in the Fredholm case follow closely the work of Agmon \cite{agmon}. We introduce a new iterated maximum principle  to treat the non-Fredholm case. The decay of harmonic spinors on  spaces with quadratically decaying Green's operator (such as one finds in ALE spaces) was also considered in \cite{Feehan}. Our treatment of the index theorem follows closely the approach of \cite{SternH} (see also \cite{SSZ}). One might also apply the general fibered boundary index formula of \cite{Leichtnam}\footnote{We thank Fr\'ed\'eric Rochon for providing this reference.}, which would also require evaluating the $\eta$-invariant boundary term, as in \cite{Hitchin74}.

\subsubsection*{A Perspective}
Moduli spaces of instantons on multi-Taub-NUT  are gaining significance in both mathematics and physics.  They play a central role in the Geometric Langlands correspondence for complex surfaces.  The original versions of this correspondence \cite{BF1,BF2} focused on instantons on ALE spaces, however, the physics picture   \cite{Tan:2008wp,Witten:2009at} reveals that instantons on multi-Taub-NUT tell a richer story.   
These instanton moduli spaces are also significant in quantum field theory, since they appear  as Coulomb branches of three-dimensional $N=4$ supersymmetric gauge theories \cite{Seiberg:1996nz,deBoer:1996mp,Nakajima:2016guo}, and as  both the Coulomb branches and the Higgs branches of Seiberg-Witten theories with impurities \cite{Gaiotto:2008ak,Cherkis:2011ee}.  A mathematical treatment of these spaces and their relation to bows appeared in \cite{Nakajima:2016guo}.

These interpretations of the instanton moduli space give precise predictions for the dimension of their $L^2$ cohomology.  See, e.g. \cite{Moore:2015qyu} for the case of monopole moduli spaces, \cite{nakajima1994,BF1} for the case of instantons on ALE spaces, and \cite{Tan:2008wp,Witten:2009at,Cremonesi:2013lqa,Nakajima:2015txa} for our case of instantons on ALF spaces.   It  is a challenging problem to verify these predictions by directly computing the $L^2$ cohomology.  In fact, since the direct study of instanton moduli spaces  presents numerous analytic challenges, it is desirable to have a simpler descriptions of these spaces.  The bow construction \cite{Cherkis:2010bn} delivers such a description by suggesting that instantons are in correspondence with bow solutions and that instanton moduli spaces are isomorphic to bow moduli spaces.  The bow moduli spaces are much more amenable to computation.  For example, their asymptotic metric was found in \cite{Cherkis:2010bn} and, for the metric on the moduli space of a low  rank bow representation, it was  computed explicitly in \cite{Cherkis:2008ip}. In this sequence of papers, we set out to prove that, indeed, bow moduli spaces are isometric to the moduli spaces of instantons on multi-Taub-NUT space.

Another significant application of instantons on multi-Taub-NUT space is in string theory where they deliver a description of the effective dynamics of the Chalmers-Hanany-Witten brane configurations \cite{Chalmers:1996xh,Hanany:1996ie}. This relation provides significant information about the instantons themselves, as demonstrated in \cite{Witten:2009xu}.

\section{Analytic Preliminaries: Moser Iteration}
Let $(M,g)$ be a smooth complete n-dimensional manifold with bounded geometry, i.e. its injectivity radius is bounded below, and its Riemann curvature tensor $\mathcal{R}$ has norm bounded above: $\|\mathcal{R}\|_{L^\infty(M)}<\infty$. Let $\delta(M)$ denote the injectivity radius of $M$. In such geometries,  the following local Sobolev embedding theorem for geodesic balls holds with  uniform Sobolev constant. 
\begin{proposition}[{{\cite[Chapter 2, Lemma 2.24]{aubin}}}] Let $M$ be a manifold of bounded geometry. Then there exists 
$ S_M>0$, depending only on $\delta(M)$ and $\|\mathcal{R}\|_{L^\infty}$, such that for all $ p\in M$, $ R<\frac{\delta(M)}{2} $, and all  $ f\in C_c^\infty(B_{R}(p))$,  one has 
\begin{align}\label{sob3}S_M \|d \xi\|^2_{L^2(B_R(p))}\geq \| \xi\|_{L^{\frac{2n}{n-2}}(B_R(p))}^2.
\end{align}
\end{proposition}
We remark that the multicenter Taub NUT spaces have bounded geometry. We will estimate curvature $F_A$ and related quantities using  Moser iteration. For the convenience of the reader and to clarify dependence on various constants, we recall the theorem and standard proof in the form we need. (See \cite[Lemma 1.2, p. 54]{CW}.)

\begin{proposition}\label{moserflex}
Let $M$ be a complete Riemannian n-manifold ($n>2$) of bounded geometry.   Let $\lambda>0$ and $R<\frac{\delta(M)}{2+2\lambda}$. 
Suppose $f$ is a nonnegative function satisfying 
\begin{align}\label{seed}\Delta f\leq w^2f,\end{align}
 for  a nonnegative function  $w\in L^{\infty}(B_{(1+\lambda)R}(p))$. Set $W:=R\|   w\|_{L^{\infty}(B_{(1+\lambda)R}(p))},$ and  
 $\pp_k:=(\frac{n}{n-2})^k.$  
Then 
\begin{align}\label{moserdoneii}
\|f\|_{L^{\infty}(B_{R}(p))}\leq  
 R^{-n} S_M^{-\frac{n}{2}}( \lambda^{-2} + W^2)^{\frac{n}{2}}\frac{2\prod_{k=0}^\infty 4^{\frac{k+1}{  \pp_k}}  }{(1-(\frac{1}{4})^{\frac{1}{n}})^{n}}  \| f \|_{L^1(B_{(1+\lambda)R}(p))}.
\end{align}
\end{proposition}
\begin{proof}
 Set $R_k:= R+2^{-k}\lambda R.$  Let $\eta(s)$ denote a $C^1$ cutoff function such that 
$\eta(s) =1$ for $s\leq \frac{3}{2}$, $\eta(s) = 0$ for $s\geq 2$, and $|d\eta|<4$. Define radial cutoff functions by 
\begin{align}\eta_k(x) = \eta(1+\frac{2^{k-1}}{\lambda}( \frac{\mathrm{dist}(x,p)}{R}-1)).\end{align}
Then $\eta_k$ is identically 1 on $B_{R_k}(p)$, it vanishes identically on the complement of $B_{R_{k-1}}(p)$, 
and
\begin{align}|d\eta_k|(x)\leq \frac{2^{k+1}}{\lambda R}\chi_{B_{R_{k-1}}(p)},\end{align}
where $\chi_B$ denotes the characteristic function of $B$. 

  Let $\rho_k $ be any positive number for now and  multiply \eqref{seed} by $\eta_k^2f^{2\pp_k-1}$ and integrate.  Integration by parts and manipulation yields 
 \begin{align}\label{nextagain}  & \int_M w^2 f^{2\pp_k }  \eta_k^2dv \geq\int_M  f^{2\pp_k-1 }  \eta_k^2\Delta fdv\nonumber\\
=&\frac{2\rho_k-1}{\rho_k^2}\|\eta_kd ( f^{\pp_k})\|^2_{L^2(M)} 
 +\frac{2}{\rho_k}  \langle   f^{\pp_k }d\eta_k,  \eta_k d( f^{\pp_k }) \rangle_{L^2(M)}\nonumber\\
=&\frac{2\rho_k-1}{\rho_k^2}\|  d(   \eta_kf^{\pp_k})\|^2_{L^2(M)} 
-\frac{1}{\rho_k^2}\| f^{\rho_k}d\eta_k\|^2_{L^2(M)}\nonumber\\
&-\frac{2\rho_k-2}{\rho_k^2}\langle d ( \eta_kf^{\pp_k}),f^{\rho_k}d\eta_k\rangle_{L^2(M)}
\nonumber\\
  & \geq \frac{ 1}{ \rho_k }(\|d( \eta_kf^{\pp_k})\|^2_{L^2(M)} 
-  \|   d(\eta_k) f^{\pp_k } \|^2_{L^2(M)}) . 
 \end{align}
By the definition of $W$,
\begin{align}\label{Wdef}\int_M w^2 f^{2\pp_k }  \eta_k^2dv\leq \frac{ W^2}{R^2}\int_M f^{2\pp_k }\eta_k^2dv.\end{align}
Applying \eqref{sob3} and \eqref{Wdef} to \eqref{nextagain} yields 

\begin{align}\label{nextagain2}  
 \| \eta_kf^{\pp_k}\|^2_{L^{\frac{2n}{n-2}}(M)}
\leq   R^{-2}S_M4^{k+1} \int_{B_{R_{k-1}}}(  W^2 +\lambda^{-2})f^{2\pp_k }dv . 
\end{align}
Hence  
\begin{align}\label{nextagainb}    
\|f\|^{2\pp_k}_{L^{\frac{2n}{n-2}\pp_k}(B_{R_k}(p))}
\leq  S_M R^{-2} 4^{k+1}(\lambda^{-2}  + W^2)\| f \|_{L^{2\pp_k}(B_{R_{k-1}}(p))}^{2\pp_k} . 
\end{align}
  Now fix $ \pp_k =  (\frac{n}{n-2})^k.$ We rewrite  \eqref{nextagainb} as
\begin{align}\label{close}
\|f\|_{L^{2\rho_{k+1}}(B_{R_k}(p))}
 \leq  (4^{k+1}S_M)^{\frac{1}{2 \pp_k}}R^{-\frac{1}{ \pp_k}}  
 (\lambda^{-2}   +  W^2  )^{\frac{1}{2 \pp_k}}\| f \|_{L^{2\pp_k}(B_{R_{k-1}}(p))}. 
\end{align}
Iterating gives
\begin{align}\label{moserdone0}
 \|f\|_{L^{\infty}(B_{R}(p))}\leq  
 R^{-\frac{n}{2 }} S_M^{-\frac{n}{4 }}( \lambda^{-2} +  W^2)^{\frac{n}{4}}  \| f \|_{L^{2}(B_{(1+\lambda)R}(p))}  \prod_{k=0}^\infty 2^{\frac{k+1}{  \pp_k}}.
\end{align}
 In order to replace the $L^2$ norm of $f$ by the $L^1$ norm on the right-hand side of \eqref{moserdone0}, we  shall use 
\cite[Lemma 4.1, p. 27]{CW}. Set $\phi(s) := \|f\|_{L^{\infty}(B_{s}(p))}$. Then     
\begin{align}\label{407}&\phi(R)\leq 
 R^{-\frac{n}{2 }} S_M^{-\frac{n}{4 }}( \lambda^{-2} +  W^2)^{\frac{n}{4}}  \| f \|_{L^{2}(B_{(1+\lambda)R}(p))}  \prod_{k=0}^\infty 2^{\frac{k+1}{  \pp_k}}\nonumber\\
&\leq 
 (\lambda R)^{-\frac{n}{2}} S_M^{-\frac{n}{4}}(1 + \lambda^2W^2)^{\frac{n}{4}}   \phi((1+\lambda)R)^{\frac{1}{2}} \| f \|_{L^{1 }(B_{(1+\lambda)R}(p))}^{\frac{1}{2}} \prod_{k=0}^\infty 2^{\frac{k+1}{\pp_k}}\nonumber\\
&\leq  \frac{1}{2}\phi((1+\lambda)R) + (\lambda R)^{-n} S_M^{-\frac{n}{2 }}(1 + \lambda^2W^2)^{\frac{n}{2}}  \| f \|_{L^{1}(B_{(1+\lambda)R}(p))}\prod_{k=0}^\infty 4^{\frac{k+1}{  \pp_k}},
\end{align}
where we have used the arithmetic/geometric mean inequality at the last step. 
Introducing 
$$A=S_M^{-\frac{n}{2 }}(1 + \lambda^2W^2)^{\frac{n}{2}}  \prod_{k=0}^\infty 4^{\frac{k+1}{  \pp_k}},$$  
we write \eqref{407}  for $\lambda\leq 1$ as   
\begin{align}\label{CW27}\phi(s)\leq \frac{1}{2}\phi(t)+(t-s)^{-n}A\| f \|_{L^{1}(B_{t}(p))}.
\end{align}
 \cite[p. 27, Lemma 4.1]{CW} states that the inequality \eqref{CW27} implies 
$$\phi(s)\leq (t-s)^{-n}\frac{2A}{(1-(\frac{1}{4})^{\frac{1}{n}})^{n}}\phi(t).$$
Hence
\begin{align}\|f\|_{L^{\infty}(B_{R}(p))}\leq R^{-n}S_M^{-\frac{n}{2 }}(\lambda^{-2} +  W^2)^{\frac{n}{2}} \frac{2 \prod_{k=0}^\infty 4^{\frac{k+1}{  \pp_k}}}{(1-(\frac{1}{4})^{\frac{1}{n}})^{n}}\| f \|_{L^{1}(B_{(1+\lambda)R}(p))},
\end{align}
as desired. 
\end{proof}
The following corollary is a typical application of Proposition \ref{moserflex}.  Here we set $r$ to be the distance to the origin in $\mathbb{R}^3$.
\begin{coro}\label{moserapp} Suppose for some $\rho >0$, 
$f\in C^2(\IR^3\setminus B_\rho(0))$ is a nonnegative function satisfying $\Delta f\leq w^2f$. Let  $y\in \IR^3\setminus B_\rho(0)$, and $w^2<\frac{c^2}{r^2+1}$ in $B_{\tilde R}(y)$ for some $\tilde R<|y|-\rho$, and some $c>0$. Then there exists $C>0$  so that for every $x\in B_{\tilde R}(y)$,
\begin{align}f(x)\leq \frac{C(1+c^2)^{\frac{3}{2}}}{(\tilde R-|y-x|)^3}\int_{B_{\tilde R}(y)}fdv.
\end{align} 
In particular, if $w^2(x)<\frac{c^2}{r^2+1}, \forall x\in \IR^3\setminus B_\rho(0)$, then for every  $x\in \IR^3\setminus B_\rho(0)$, 
\begin{align}f(x)\leq C(1+c^2)^{\frac{3}{2}}|x|^{-3}\int_{\IR^3}fdv.
\end{align} 
\end{coro}
\begin{proof}Let 
$x\in B_{\tilde R}(y)$. In $B_{\frac{\tilde R-|y-x|}{2}}(x)$, $r>|x|-\frac{\tilde R-|y-x|}{2}\geq \frac{\tilde R-|y-x|}{2}$. 
Hence $w^2(z)<\frac{c^2}{(\frac{\tilde R-|y-x|}{2})^2}$, for $z\in B_{\frac{\tilde R-|y-x|}{2}}(x)$. Now set $R=\frac{\tilde R-|y-x|}{4}$ and $\lambda = 1$ and apply Proposition \ref{moserflex} to estimate 
$$|f(x)|\leq \|f\|_{L^\infty(B_R(x))}\leq  \frac{C(1+c^2)^{\frac{3}{2}}}{(\tilde R-|y-x|)^3}\int\limits_{B_{  2R}(x)}fdv\leq  \frac{C(1+c^2)^{\frac{3}{2}}}{(\tilde R-|y-x|)^3}\int\limits_{B_{\tilde R}(y)}fdv,$$ 
proving the first claim. The second claim is an immediate consequence of the first, choosing $x=y$ and $\tilde R=\frac{1}{2}(|x|-\rho)$. 
\end{proof}

\section{Decay of Yang-Mills Curvature}\label{Sec:YMdecay}
Our preliminary estimates for the curvature decay require only the weaker hypothesis that the connection is smooth, finite action Yang-Mills. In this section we work under this weaker hypothesis. 

We call a smooth connection $A$ {\em a Yang-Mills connection} if its curvature satisfies the Yang-Mills equation $d_A^*F_A=0$.  Note that the Bianchi identity, $d_AF_A=0,$ implies that any connection with anti-self-dual curvature is also a Yang-Mills connection.  Given a local orthonormal frame $\{e_j\}_j$ and coframe $\{e^j\}_j$, let
$\ep^j$ denote exterior multiplication on the left by $e^j$ and let 
$\ep^*_j$  denote the adjoint operation - interior multiplication by the metrically dual vector field. 
(In general, for a differential form $\omega$, $\ep(\omega)$ will denote the exterior multiplication on the left by $\omega$, while $\ep^*(\omega)$ will denote the adjoint operation.)  
With this notation, when $A$ is Yang-Mills, its curvature 
$F=F_A$ satisfies the Bochner formula:
\begin{equation}\label{bochner} 
0 =(d_Ad_A^*+d^*_Ad_A)F= \nabla^*\nabla F - \ep^i\ep^*_j{\cal R}_{ij}F - \ep^i\ep^*_j[F_{ij},F],
\end{equation}
where $\mathcal{R}$ denotes the Riemann curvature operator.

We recall in our context, Uhlenbeck's $\epsilon-$regularity theorem for Yang-Mills connections.   
\begin{lemma}[{{\cite[Theorem 2.2.1]{Tian}}}]\label{epreg} 
Let $A$ be a smooth Yang-Mills connection with $L^2$ curvature on a Hermitian  vector bundle $\mathcal{E}$ over a Riemannian 4-manifold of bounded geometry. There exist constants $\epsilon, C>0$ and $R\in (0,\frac{\delta(M)}{2})$ such that for every $ p\in M$,  
if $\rho\in (0,R)$  is small enough so that 
\begin{align}\label{epsreg}\int_{B_\rho(p)}|F_A|^2dv<\epsilon,
\end{align}
then
\begin{align}\label{epsregc} |F_A|^2(p)\leq\frac{C}{\rho^4}\int_{B_\rho(p)}|F_A|^2dv.
\end{align}
\end{lemma}  
\begin{remark}
The constants $\epsilon$, $C$, and $R$ in this theorem depend only on the injectivity radius, the magnitude of the Riemannian curvature (through the expression for $\Delta|F_A|^2$), and the Sobolev constant $S_M.$     (See \cite{Wong} for an exposition in which the dependence of the constants is made explicit, in the context of harmonic maps).  
\end{remark}
 
Once we have bounded $|F_A|$, we may replace $\epsilon-$regularity arguments with Moser iteration arguments. 
\begin{proposition}\label{Thm:Linf} 
Let $M$ be a Riemannian 4-manifold of bounded geometry. Let $\mathcal{E}\to M$ be a Hermitian vector bundle over $M$ with smooth finite action Yang-Mills connection  $A$. Then  there exists $ \tilde C>0$ depending only on $\delta(M), S_M,$ and $\|\mathcal{R}\|_{L^\infty} $ and the rank of $\mathcal{E}$ 
such that $\|F_A\|_{L^\infty}<\tilde C\|F_A\|_{L^2}$. Moreover,  for any choice of basepoint $o\in M,$ $|F_A(p)|\to 0$ as $\mathrm{dist}(p,o)\to\infty.$ 
\end{proposition} 
\begin{proof}From equation \eqref{epsregc}, we see that 
$\|F_A\|_{L^\infty}\leq \tilde C\|F_A\|_{L^2}$, with $\tilde C = C4^4\delta(M)^{-4}$.   For any 
$\eta$ less than the $\epsilon$ appearing in \eqref{epsreg}, let 
$K_\eta$ be a compact subset of $M$ such that $\int_{M\setminus K_\eta}|F_A|^2dv<\eta.$ Then for all $p$  of distance at least $R<\frac{\delta(M)}{2}$ from $K_\eta$, we have from \eqref{epsregc}
\begin{align} |F_A|^2(p)\leq\frac{C\eta}{R^4},
\end{align}
thus $F_A$ is indeed $L^\infty.$ 
The norm decay follows from selecting an exhausting sequence of compact sets $K_{\eta_k}$ with $\lim_{k\rightarrow\infty}\eta_k=0.$ 
\end{proof}


\section{Instanton Connections}\label{Sec:Instantons}

In the previous section we proved that, on a complete four-manifold of bounded geometry,  Yang-Mills connections with square integrable curvature have curvature vanishing at infinity. In this section we specialize to self-dual (SD) and anti-self-dual (ASD) connections with square integrable curvature on TN$_k$, and we impose the generic asymptotic holonomy assumption.  We first prove that generic asymptotic holonomy around the Taub-NUT fiber, as defined in Section~\ref{Sec:Holonomy}, implies that the asymptotic holonomy exists and its conjugacy class is the same for every direction. 
In Section~\ref{Sec:FirstBound} we show curvature decays at least as fast as $r^{-3/2}$. Section~\ref{Sec:Quadratic} sharpens this result to quadratic curvature decay.

\subsection{Holonomy} \label{Sec:Holonomy}
As in  (\ref{fiber}), the multi-Taub-NUT metric admits an isometric $S^1$ action with $k$ fixed points  $\{\nu_1,\ldots,\nu_k\}$, and the quotient of TN$_k$ by the $S^1$ action is $\IR^3$.  Let $\pi_k:\mathrm{TN}_k\rightarrow\mathbb{R}^3$ again denote the projection to this  quotient. We now consider the holonomy of the instanton 
around the $S^1$ fibers $\pi_k^{-1}(x)$ for $x\in \IR^3\setminus  \{ \nu_1,\ldots,\nu_k\}.$ 

Let $A$ be an instanton, let $p\in \pi_k^{-1}(x)$, and let $H_p$ denote the holonomy of $A$ around $\pi_k^{-1}(x)$ with base point $p$. Thus $H_p$ is the unitary transformation  $H_p:\mathcal{E}_p\to \mathcal{E}_p$ obtained by parallel translation around $\pi_k^{-1}(x)$ (in the direction $\partial_\tau$). Let  $\{e^{ 2\pi i\mu_a(x)}\}_a$ be the eigenvalues of $H_p.$  These eigenvalues depend on $x$ 
and not on  the choice of the point $p\in\pi_k^{-1}(x)$. 

The holonomy of a tensor product of bundles is the tensor product of the holonomies. In particular, if $\mathcal{E}$ has holonomy $H_{p}$ with eigenvalues $e^{2\pi i\mu_a}$, $a=1,\cdots, n,$ then 
$ad(\mathcal{E})\subset \mathcal{E}\otimes \mathcal{E}^*$ has holonomy $Ad(H_p)$ with eigenvalues $e^{2\pi i(\mu_a-\mu_b)}$, $a,b=1,\cdots, n.$ When $H_p$ belongs to the regular adjoint orbit, i.e. its eigenvalues are distinct (equivalently  $\mu_a-\mu_b\not\in\IZ$ for $a\not = b$), then the centralizer of $H_p$ in $ad(\mathcal{E}_p)$ is a Cartan subalgebra, $Z_p\subset ad(\mathcal{E}_p)$. This subspace is invariantly defined and is the fiber of a subbundle $Z$ of $ad(\mathcal{E})$. Equivalently, $Z$ is the holonomy eigen-subbundle of $Ad(H)$ with eigenvalue $1$. 
 
Consider a simple curve $c$ in $\IR^3\setminus  \{ \nu_1,\ldots,\nu_k\}$ that is unit speed in the Euclidean metric. Choose a  trivialization $S^1\times c$  of the $S^1$ bundle $\pi_k^{-1}(c)$.  Choose a frame for $\mathcal{E}$ along the circle $\pi_k^{-1}(c(0))$  such that the eigenvalues of the connection matrix $A( \frac{\p}{\p\tau})$ all have norm $\leq 1$.  Extend  this frame to the 2-dimensional cylinder $\pi_k^{-1}(c)$, by requiring it to be covariant constant along $ \{\tau\}\times c$ for each value $\tau$. 
In such a  frame, the connection matrix $A$ of the connection pulled back to  $\pi_k^{-1}(c)$ satisfies 
\begin{align}
A(\tau,s)(c'(s))=0, \text{   and thus   }  A\wedge A = 0.
\end{align}
 Hence, for all $v$ tangent to the cylinder,
\begin{align}\label{likeabelian}F(c'(s),v)=dA(c'(s),v),
\end{align}
and
\begin{equation}
\label{avar}A(\tau,s) = A(\tau,0) + \int_0^s F(c'(u),\cdot)(\tau,u)du.
\end{equation}
Since  $c$ is unit speed in $\IR^3$,  $|F(c',\partial_\tau)|\leq \sqrt{V}|F|,$  and
\begin{equation}\label{holoest0}
|A(\tau,s) - A(\tau,0)|\leq \int_0^s|F(\tau,c(t))|\sqrt{V}dt.
\end{equation}
In particular, 
\begin{equation}\label{holoest}
|A(\tau,s) - A(\tau,0)|\leq{|s|}\sup_{t\in [0,s]} \sqrt{V}|F(\tau,c(t))|.
\end{equation}
The  (unitary) solutions  of $(\frac{\p}{\p\tau} + A(\tau,s) (\frac{\p}{\p\tau}))\Pi(\tau,s) = 0,$ $\Pi(0,s) = \text{Id}$ satisfy 
\begin{align}|\Pi(\tau,s_1)-\Pi(\tau,s_2)|&\leq \|A(\cdot,s_1)-A(\cdot,s_2)\|_{L^1([0,\tau])}\nonumber\\
&+\int_0^\tau|A(u,s_1)||\Pi(u,s_1)-\Pi(u,s_2)|du.
\end{align}
 Gronwall's inequality (\cite[Lemma 2.7]{teschl}) then yields 
\begin{align}&|\Pi(\tau,s_1)-\Pi(\tau,s_2)| \leq \|A(\cdot,s_1)-A(\cdot,s_2)\|_{L^1([0,\tau])}\exp[\|A(\cdot,s_1)\|_{L^1([0,\tau])}]\nonumber\\
&
\leq e^{2\pi}\|A(\cdot,s_1)-A(\cdot,s_2)\|_{L^1([0,\tau])}\exp[\|A(\cdot,s_1)-A(\cdot,s_2)\|_{L^1([0,\tau])}]
\nonumber\\
&
\leq e^{2\pi}\|\sqrt{V}F(\cdot,c(\cdot))\|_{L^1([0,\tau]\times [s_1,s_2])}\exp[\|\sqrt{V}F(\cdot,c(\cdot))\|_{L^1([0,\tau]\times[s_1,s_2])}],
\end{align}
where we have used the bound on the eigenvalues $A(\frac{\p}{\p\tau})$ on $\pi_k^{-1}(0)$ in the second inequality. 

Since $H_{c(s)} = \Pi(2\pi,s)$, 
we have 
\begin{align}\label{holovar}&\|H_{c(s)}-H_{c(0)}\|_{sup}\nonumber\\
&\leq \lim_{N\to\infty}\sum_{j=1}^N\|H_{c(\frac{js}{N})}-H_{c(\frac{(j-1)s}{N})}\|_{sup}\nonumber\\
&\leq   \lim_{N\to\infty}\sum_{j=1}^Ne^{2\pi}\|\sqrt{V}F(\cdot,c(\cdot))\|_{L^1([0,2\pi]\times [\frac{(j-1)s}{N},\frac{js}{N}])}
e^{\|\sqrt{V}F(\cdot,c(\cdot))\|_{L^1([0,2\pi]\times[\frac{(j-1)s}{N},\frac{js}{N}])}}\nonumber\\
&= e^{2\pi}\|\sqrt{V}F(\cdot,c(\cdot))\|_{L^1([0,2\pi]\times [0,s])} .\end{align}

\begin{define}\label{genconnKappa}   Let 
 $0<\kappa \leq \frac{1}{6}$. 
Let $(\mathcal{E},{A})\to \mathrm{TN}_k$ denote a rank $n$ Hermitian bundle with connection. Let $\mathcal{U}\subset \mathbb{R}^3\setminus\{\nu_1,\cdots,\nu_k\}$ be any set.  We say that $A$ has {\em $\kappa-$generic holonomy} in $\mathcal{U}$ if on each circle fiber $\pi_k^{-1}(x)$ of $\pi_k^{-1}(\mathcal{U})$, the eigenvalues $\{e^{2\pi i\mu_a(x)}\}_a$ (with $\mu_a(x)$ defined mod $\IZ$)  of the holonomy $H(x)$, satisfy  
\begin{align}\label{kappaspace}
&\inf \left\{|\mu_a(x) - \mu_{a'}(x) -m| : m\in \IZ \text{ and }a\neq a'\right\}\geq\kappa,&  &\forall x\in\mathcal{U}. 
\end{align}
\end{define} 
\begin{define}\label{genconn}  
We say that a connection $A$ on TN$_k$ has {\em generic asymptotic holonomy} if there exists a ray $\rho:[0,\infty)\to \IR^3$ and  there exist $ t_0>0$, $0<\kappa \leq\frac{1}{6}$ so that $A$ has $\kappa-$generic holonomy in  $\rho([t_0,\infty)).$ 
\end{define}
In particular, for a connection with generic asymptotic holonomy the stabilizer of its holonomy  over any point of that given ray $\rho([t_0,\infty))$ is the Cartan subgroup of the gauge group.

Observe that the generic asymptotic holonomy condition involves a single ray in $\IR^3$ and imposes no a priori conditions that are global over the manifold's end. We will show in Proposition \ref{r3/2}, however, that this generic asymptotic holonomy condition involving a single ray implies that the holonomy is $\kappa$-generic on the complement of a compact set. Towards this goal we next determine lower bounds  on the distance between holonomy eigenvalues in terms of the curvature.
\begin{lemma}\label{lemma9}
Let $h(t)$ be a continuous family of unitary matrices such that the eigenvalues $\{e^{2\pi i\mu_a(0)}\}_j$ of $h(0)$ satisfy \eqref{kappaspace}, for some $\kappa \leq \frac{1}{6}$. Suppose that $\|h(0)-h(s)\|_{sup}<\epsilon\leq \sin \pi\kappa$. 
Then   the eigenvalues $\{e^{2\pi i\mu_a(s)}\}_a$  of $h(s)$  satisfy  
\begin{align}\inf \left\{|\mu_a(s)-\mu_{a'}(s) -q| : q\in \IZ \text{ and }a\not = a'\right\}
>\kappa-\epsilon. 
\end{align} 
\end{lemma}
\begin{proof}
The eigenvalue condition can also be expressed as 
 $$\frac{1}{2}\left| e^{2\pi i \mu_a(0)}- e^{2\pi i \mu_{a'}(0)} \right| =  \sin(\pi(\mu_a(0)-\mu_{a'}(0)))> \sin (\pi\kappa),$$ for $a\not = a'$. 
 The condition $\|h(0)-h(s)\|_{sup}<\epsilon$ implies $ |e^{2\pi i\mu_a(s)}-e^{2\pi i\mu_a(0)}|<\epsilon$, and 
therefore 
\begin{align*}
&\sin(\pi(\mu_a(s)-\mu_{j'}(s))) > \sin (\pi\kappa) - \epsilon\\
&\geq \sin (\pi(\kappa -\frac{1}{\pi}\arcsin(2\epsilon ))
>\kappa-\frac{1}{\pi}\arcsin(2\epsilon).
\end{align*}
The observation that   
$\frac{1}{\pi}\arcsin(2\epsilon ) <  \epsilon$ when $0<\epsilon<\frac{1}{2}$ completes the proof. 
\end{proof}
\begin{coro}\label{simplestav}If 
$c:[0,s]\to \IR^3\setminus\{\nu_1,\cdots,\nu_k\}$ is a unit speed curve with $H_{c(0)}$ satisfying the $\kappa$-generic condition   \eqref{kappaspace}, and 
$\int_0^s e^{2\pi} \sqrt{2\pi} \sqrt{\int_0^{2\pi} V|F(\tau,c(t))|^2d\tau }dt \leq \epsilon$, with $\epsilon\leq\frac{1}{3}$, then 
the eigenvalues $\{e^{2\pi i \mu_a(s)}\}_a$ of $H_{c(s)}$  satisfy  
\begin{align}\inf \left\{|\mu_a(s)-\mu_{a'}(s) -m| : m\in \IZ \text{ and }a\not = a'\right\}\geq\kappa-  \epsilon. 
\end{align} 
\end{coro} 
\begin{proof} By Lemma \ref{lemma9} and Equation \eqref{holovar} followed by Cauchy-Schwartz, 
\begin{align*}&\inf \left\{|\mu_a(s)-\mu_{a'}(s) -m| : m\in \IZ \text{ and }a\not = a'\right\}\nonumber\\
&\geq\kappa-   e^{2\pi}\|\sqrt{V}F(\cdot,c(\cdot))\|_{L^1([0,2\pi]\times [0,s])}\nonumber\\
&\geq\kappa-   \sqrt{2\pi}e^{2\pi} \int_0^s\sqrt{\int_0^{2\pi}V|F(\tau,c(t))|^2d\tau} dt. 
\end{align*}
\end{proof}

\subsection{First Bound}\label{Sec:FirstBound}
In this subsection, we exploit more features of the geometry of TN$_k$.   
The holonomy enters our analysis as an effective potential as seen in the following lemma. 
\begin{lemma}\label{poincholonomy}
Let $(B,\nabla)$ be a rank $n$ Hermitian vector bundle with Hermitian connection over a  circle $(S^1,d \tau)$ of length $2\pi  $.  Let $\{e^{2\pi i\mu_a}\}_{a=1}^n$  be the eigenvalues of the holonomy $H_p$, for one (and hence every)  base point $p\in S^1 $.  The eigenvalues of $i\nabla_{\p_\tau}$ on $L^2 $ sections of $B$ are 
\begin{align}\label{spec}
\mathrm{Spec}\,(i\nabla_{\p_\tau}) = \{\mu_a-m:m\in\IZ, 1\leq a\leq n\}.\end{align}
Assume further that the eigenvalues  satisfy :  $\inf_{a}\inf_{m\in\IZ}|\mu_a-m|\geq \kappa$.  
Then for every   smooth section $\sigma$ of $B$, 
\begin{align}\int_{S^1}|\nabla\sigma|^2d \tau\geq \kappa^2 \int_{S^1}|\sigma|^2d \tau.
\end{align}
\end{lemma}
\begin{proof}Let 
$\{v_1,\cdots,v_n\}$ be an orthonormal eigenbasis of $B_0$ for 
$H_0$ with eigenvalues $\exp(2\pi i \mu_a)$. Let $v_a(\tau)$ denote the covariant constant extension of $v_a$, for each $a$. Then 
$v_a(\tau+2\pi) = e^{2\pi i\mu_a}v_a(\tau).$ For any section $\sigma$ write 
$$\sigma(\tau) = \sum_a\sigma_a(\tau)e^{-i \tau\mu_a}v_a(\tau),$$
where the coordinate functions $\sigma_a$ are periodic of period $2\pi$. 
Then $\nabla= \partial_\tau$ in this frame, and 
$$\int_0^{2\pi}|\nabla\sigma(\tau)|^2d\tau =   \sum_a\int_0^{2\pi}|\sigma_a'(\tau) -i \mu_a\sigma_a|^2d\tau.$$
Fourier expanding $\sigma_a(\tau) = \sum_{k\in\IZ}\sigma_{ak}e^{i k\tau },$  
\begin{align}\label{thetaL}
\int_{S^1 }|\nabla\sigma(\tau)|^2d \tau& = \sum_a\sum_k \int_0^{2\pi}|\sigma_{ak}|^2(k-\mu_a)^2d\tau\nonumber\\
&\geq \kappa^2 \int_{S^1 }|\sigma(\tau)|^2d \tau.
\end{align}
Moreover, we see that $\{e^{i\tau(m-\mu_a)}v_a:m\in\IZ,1\leq a\leq n\}$ gives a complete eigenbasis for the $L^2$ sections of $B$ with the claimed eigenvalues. 
\end{proof}

 Consider now a set $\mathcal{U}$ with $\kappa-$generic holonomy, as in Definition \ref{genconnKappa}. We again let $Z$ denote the $1$ eigenspace of $Ad(H)$ in $ad(\mathcal{E})|_{\mathcal{U}}$.  Let $B$ denote its orthogonal complement. Then over $\mathcal{U}$, 
\begin{align}ad(\mathcal{E}) = Z\oplus B,
\end{align}
where $B$ is the subbundle on which the logarithms of the eigenvalues of the  holonomy have distance at least $\kappa$ from $\IZ$. This decomposition is preserved by $\nabla_{\frac{\p}{\p\tau}}.$ The generic holonomy hypothesis implies that $Z$ is a maximal abelian subalgebra of $ad(\mathcal{E})$.  This decomposition induces a corresponding decomposition of the curvature and its covariant derivatives  as
\begin{align}\label{zdecomp1}
F_A &= F_A^Z + F_A^B,& 
\nabla^qF_A &= (\nabla^qF_A)^Z + (\nabla^qF_A)^B,
\end{align}
where $F_A^Z$ is a two-form with coefficients in $Z$,  $F_A^B$ is a two-form with coefficients in $B,$ $(\nabla^qF_A)^Z$ is a tensor with coefficients in $Z$, and $(\nabla^qF_A)^B$ is a tensor with coefficients in $B.$ 
\begin{remark}\label{remarkr}We fix an origin for $\IR^3$ and will henceforth let $r$ denote both the radial function on $\IR^3$ and its lift to $M$=TN$_k$. 
\end{remark}
\begin{lemma}\label{verygeneric} Let 
$\mathcal{V}$ be either the spin bundle or 
$(T^*M)^{\otimes q}\otimes \Lambda^2T^*M$ for some $q\geq 0$.  For each $\kappa>0$ and each $\delta\in (0,\kappa)$, 
there exists a compact set $K_\delta\subset \IR^3$ such that $\forall x\in K_\delta^c$ 
 if  
$(\mathcal{E},A)$ has $\kappa-$generic holonomy at $x$,
 then 
for every section $\sigma$ of $\mathcal{V}\otimes ad(\mathcal{E})$, 
\begin{align}
\int_{\pi_k^{-1}(x)}|\nabla_{\p_\tau} \sigma|^2d\tau \geq   (\kappa-\delta)^2  \int_{\pi_k^{-1}(x)}| \sigma^B|^2d\tau.
\end{align}  
 Here $\sigma^B$ denotes the unitary projection of $\sigma$ onto $\mathcal{V}\otimes B$.
\end{lemma}
\begin{proof}
For any open set $W\subset\mathbb{R}^3\setminus\{\nu_1,\ldots\nu_k\},$ over which the $S^1$ bundle is trivial, 
the tangent bundle has a local coordinate frame defined in $\pi_k^{-1}(W)$.
 The coordinates can be chosen with $O(\frac{1}{r^2})$ Christoffel symbols. (For example, $(x,\tau)$ of \eqref{metric} and \eqref{varpi} are such coordinates, since the ($\IR^3-$)gradient of the harmonic function $V$ of Eq.~\eqref{metricpotential} is $O(r^{-2})$ on $\IR^3$.) In this ($S^1$-periodic) frame,  a tangent  vector field $v$  which is parallel  around the $S^1$ fiber satisfies an equation of the form $\frac{d v}{d\tau}= \mathcal{A} v ,$ where $\mathcal{A}= O(r^{-2})$. The fundamental theorem of calculus then implies the holonomy of the Levi-Civita connection is $I+O(r^{-2})$.  This in turn implies the holonomy of $\mathcal{V}$ is $I+O(r^{-2})$. Hence the logarithm of the eigenvalues of the holonomy of $\mathcal{V}\otimes B$ have distance at least $\xi= \kappa-O(r^{-2})$ from $\IZ$. Choose $K_\delta\supset B_R$, for the ball $B_R$ of  sufficiently large radius $R$ to ensure that on $B_R^c$, $\xi > \kappa -\delta$. Then  Lemma \ref{poincholonomy} yields the desired inequality. 
\end{proof}
 
In order to take advantage of holonomy information and the geometry of TN$_k$, it will be convenient for many estimates to first integrate quantities over the TN$_k$ circle fiber, and then compute on the $\IR^3$ base.  Given  sections $\psi_1,\psi_2$ of a Hermitian bundle over TN$_k$, define  functions 
$\Phi(\psi_1)$ and $Q(\psi_1,\psi_2)$ on $\mathbb{R}^3$ to be 
\begin{align}\label{defPhi}
\Phi(\psi_1)(x) &:=  \int_{\pi_k^{-1}(x)}|\psi_1|^2d\tau&
&\text{ and   }&
Q(\psi_1,\psi_2)(x) &:=  \int_{\pi_k^{-1}(x)}\langle \psi_1,\psi_2\rangle d\tau.
\end{align}
In order to obtain decay estimates for  $|F_A|^2$, it suffices to obtain decay estimates for $\Phi(F_A)$ as  the following lemma shows. 
\begin{lemma}\label{rote} 
Let $\psi$ be a smooth section of a Hermitian bundle over TN$_k$. Suppose that for some $W\geq 0$, 
\begin{align}\Delta |\psi|^2\leq W^2|\psi|^2.\end{align}
Let $\alpha\in [0,\infty).$ Then  $r^\alpha|\psi|^2$ is bounded if $r^\alpha\Phi(\psi)$ is bounded.
\end{lemma}
\begin{proof} For any $p\in M=\mathrm{TN_k},$    
by  Ineq.~\eqref{moserdone0} of the proof of Proposition \ref{moserflex},
\begin{align}\label{rote1}
\|\psi\|^2_{L^\infty\left(B_{\frac{\delta(M)}{4}}(p)\right)}\leq c_4 S_M^{-2}\frac{1}{\delta(M)^4} (1+W^2)^2\|\psi\|^2_{L^2\left(B_{\frac{\delta(M)}{2}}(p)\right)},
\end{align}
where $c_4$ depends only on dimension (and not on geometry).  We now estimate, outside of a compact set,  
\begin{align}
\|\psi\|^2_{L^2\left(B_{\frac{\delta(M)}{2}}(p)\right)}& \leq \|V\Phi(\psi)\|_{L^1\left(B_{\frac{\delta(M)}{2}}(\pi_k(p))\right)}\nonumber\\
&
\leq (r(p)-\frac{\delta}{2})^{-\alpha}\|Vr^{\alpha}\Phi(\psi)\|_{L^1\left(B_{\frac{\delta(M)}{2}}(\pi_k(p))\right)}.
\end{align}
Hence by \eqref{rote1}, $r^\alpha|\psi|$ is bounded. 
\end{proof} 
 
We next seek estimates  for $\Phi(F_A)$. Using
\begin{align}\label{laplace}\Delta_{TN}(\pi_k^*f)=\pi_k^*(V^{-1}\Delta_{R^3}f),
\end{align}
for $f:\IR^3\to \IR$. 
We compute:
\begin{align}\label{philap}V^{-1}\Delta_{\IR^3} \frac{1}{2}\Phi(\psi)  = -\Phi(\nabla\psi) +Q(\nabla^*\nabla\psi,\psi).
\end{align}
In particular, for Yang-Mills connections, 
\begin{align}\label{phiboch}V^{-1}\Delta_{\IR^3} \frac{1}{2}\Phi(F_A)  = -\Phi(\nabla F_A) +Q(\epsilon^i\epsilon_j^*{\cal R}_{ij} F_A+\epsilon^i\epsilon_j^*[F_{ij}, F_A],F_A).
\end{align}
 
Equation \eqref{phiboch}  implies for some computable $c_1, c_2 >0$, 
\begin{multline}\label{phiboch2}  
\frac{1}{2V}\Delta_{\IR^3}\Phi(F_A)(x)  \leq -\Phi(\nabla F_A)(x)
 + c_1\|{\cal R}\|_{L^\infty(\pi_k^{-1}(x))}\Phi(F_A)(x)\\
+ c_2\|F_A\|_{L^\infty(\pi_k^{-1}(x))}\Phi(F_A^B)(x).
\end{multline}
If we further  assume that $x\in K_\delta$ (introduced in Lemma \ref{verygeneric}) and that $A$ has $\kappa-$generic holonomy at $x$, then  Lemma \ref{verygeneric} implies 
\begin{align}\label{phibochk}  \frac{1}{2V}\Delta_{\IR^3}\Phi(F_A)(x)  \leq & -((\kappa-\delta)^2-c_2\|F_A\|_{L^\infty(\pi_k^{-1}(x))})\Phi( F_A^B)(x)\nonumber\\
 &+ c_1 \|{\cal R}\|_{L^\infty(\pi_k^{-1}(x))}\Phi(F_A)(x).
\end{align}

\begin{proposition}\label{r3/2} Let  $A$ be a Yang-Mills connection with  generic asymptotic holonomy. Then 
$r^{\frac{3}{2}}|F_A|$ is bounded, and for some $ \kappa>0 $, $A$ has $\kappa-$generic holonomy in the complement of a compact set.  
\end{proposition}
\begin{proof} 
By Proposition \ref{Thm:Linf}, given $\delta \in (0,\frac{1}{12})$, there exists a compact set 
$K_{2, \delta}\supset K_\delta$ so that $c_2\|F_A\|_{L^\infty(\pi_k^{-1}(x))} <\frac{\delta^2}{2}$, for $x\in K_{2, \delta}^c$.  When  $A$ has  $2\delta-$generic holonomy at  $x\in K_{2,\delta}^c$,  we use the cubic decay of  the Riemann curvature   $\cal R$ to bound the right-hand side of the Bochner estimate \eqref{phibochk} by
\begin{align}\label{phiboch22} \frac{1}{2V} \Delta_{\IR^3} \Phi(F_A)(x)  \leq \hat cr^{-3}\Phi(F_A)(x).
\end{align}
By Corollary \ref{moserapp}, if $A$ has $2\delta-$generic holonomy in $B_R(y)\subset K_{2,\delta}^c $, for some $R>1$ and some $y\in K_{2,\delta}^c$, then for some $c_3>0$ depending only  on  $\hat c$, 
\begin{align}\label{moserkap}\Phi(F_A)(x)&\leq \frac{c_3^2 }{  (R-|x-y|)^3}\int_{B_{R-|x-y|}(x)}\Phi(F_A) dv 
\nonumber\\
&\leq \frac{c_3^2 }{  (R-|x-y|)^3}\int_{B_{R }(y)}\Phi(F_A) dv,
\end{align}
 for any $x\in B_R(y).$  
Hence, for any unit vector $u$, 
\begin{align}\label{decline}\int_0^{R-1} \sqrt{V\Phi(F_A)(y+tu)}dt\leq 2c_3 (1-R^{-\frac{1}{2}})\|V \|_{L^\infty(B_{R }(y))}^{1/2}\| \Phi(F_A)\|_{L^1(B_{R }(y))}^{1/2}.\end{align}

Set
$$\epsilon :=  2e^{2\pi}c_3\sqrt{2\pi}  \|V \|_{L^\infty(B_{R }(y))}^{1/2}\| \Phi(F_A)\|_{L^1(B_{R }(y))}^{1/2}.$$
Corollary \ref{simplestav} and equation \eqref{decline} imply that if $A$ has  $\xi-$generic holonomy at $y$ and $2\delta $-generic holonomy in $B_{R}(y)$, then if $\epsilon<\xi$, $A$ has  
$(\xi- \epsilon)-$generic holonomy at $y+(R-1)u$.

In particular, if 
\begin{align}\label{condl1}    2 e^{2\pi} c_3 \sqrt{2\pi}   \|V \|_{L^\infty(B_{R }(y))}^{1/2}\| \Phi(F_A)\|_{L^1(B_{R }(y))}^{1/2}<\delta,
\end{align}
then $A$ has ($\xi-\delta)-$generic holonomy in $B_{R-1}(y)$. 
If in addition 
\begin{align}\label{condl2} e^{2\pi} \sqrt{2\pi}   \|\sqrt{V\Phi(F_A)}\|_{L^\infty(B_R(y))}< \delta ,\end{align}
then the resulting upper bound on $\int_{R-1}^R\sqrt{\Phi (F_A)(y+tu)}\,dt$ combined with  \eqref{condl1}  implies that $A$ has $(\xi-2\delta)-$generic holonomy in $B_{R}(y)$. By Proposition \ref{Thm:Linf}, there exists a compact set $K_{3,\delta}\supset K_{2,\delta}$ such that \eqref{condl1} and \eqref{condl2} hold whenever $B_R(y)\subset K_{3,\delta}^c$.   
Hence  
\begin{enumerate}
\item[(i)] if $B_{R}(y)\subset K_{3,\delta}^c$,
\item[(ii)] if $A$ has $2\delta-$generic holonomy in $B_{R}(y)$, and 
\item[(iii)] if $A$ has  $ \xi-$generic holonomy at $y$ with $\xi>4\delta$,
\end{enumerate}
then 
\begin{align}\label{Alow}
A \text{ has } ( \xi-2\delta)-\text{generic holonomy in } B_{R}(y). 
\end{align}
Given $y\in K_{3,\delta}^c$ so that $A$ has  $\xi-$generic holonomy at $y$ with $ \xi\geq 8\delta$,  set 
$$R_{\delta}(y):=\sup\{R:B_R(y)\subset K_{3,\delta}^c,\text{ and } A  \text{ has }2\delta\text{--generic holonomy in }B_R(y) \}.$$
Then (i)-(iii) are satisfied for $R= R_\delta(y)$ and therefore $A$ has ($\xi-2\delta)-$generic holonomy on $B_{R_{\delta}(y)}(y).$ Hence $A$ has ($\xi-3\delta)-$generic holonomy on $B_{R_{\delta}(y)+\beta}(y),$ for some $\beta>0$.   By the assumption of maximality of $R_\delta(y)$, we conclude that $\forall \beta>0,$  $B_{R_{\delta}+\beta}(y)\cap K_{3,\delta } \not =\emptyset$. We conclude that if $A$ is   $8\delta-$generic at $y$, then $A$ is $6\delta-$generic in every $B_R(y)\subset K_{3,\delta }^c$. 

By hypothesis, $A$ has  generic asymptotic holonomy, so there is  a ray $\rho$ in $\mathbb{R}^3$, which without loss of generality is assumed to start at the origin, and a closed ball $B_N(0)$, that contains all the  $\{\nu_j\}$,  such that, for some $\kappa>0$,  $A$ has $\kappa-$generic holonomy on  $\rho([0,\infty))\cap B^c_{N}(0)$.  We fix coordinates so that $\rho$ is the positive $z$ axis.  Set $\delta = \frac{\kappa}{16}$. By increasing $N$ if necessary, we may assume $K_{3,\delta}\subset B_N(0)$.    

Take $t_0\in\mathbb{R}$ such that $\rho(t)\in B^c_N(0)$ for $t\geq t_0$.  By the argument above (see \eqref{Alow}), $A$ has $\kappa-2\delta=\frac{7}{8}\kappa$-generic holonomy on  balls centered at points of $\rho$ if these balls are contained in $B^c_N(0)$.  These balls cover the open half-space $z>N$. By continuity of the holonomy, $A$ has  $\frac{7}{8}\kappa-$generic holonomy on the closed halfspace. In particular, $A$ has $\frac{7}{8}\kappa- $generic holonomy on the four rays  $(t,0,N)$, $(-t,0,N)$,   $(0,t,N)$, and $(0,-t,N)$, $t\geq 0$. Iterating the argument, we deduce that $A$ has $\frac{7}{8}\kappa-2\delta=\frac{6}{8}\kappa-$generic holonomy on the four closed half spaces $x\geq N$, $x\leq -N$, $y\geq N$, and $y\geq -N$. The first of these half spaces contains the ray $(N,0,-t)$, $t\geq 0$, on which $A$ therefore has $\frac{6}{8}\kappa-$generic holonomy. One more application of the above argument proves $z\leq -N$ has $\frac{6}{8}\kappa-2\delta=\frac{5}{8}\kappa-$generic holonomy. Hence, outside a cube of side length $2N$, $A$ has $\frac{5}{8}\kappa-$generic holonomy. 

We now apply \eqref{moserkap} to deduce $r^3\Phi(F_A)$ is bounded. Lemma \ref{rote} then implies $r^3|F_A|^2$ is bounded. 
\end{proof}

Next, we obtain estimates for $\nabla^kF_A$.
Given a one-form $w$, let $l(w)$ denote left tensor multiplication by $w$.  In an orthonormal frame we have 
\begin{align}[\nabla^*\nabla,\nabla] = -2l(e^b)(F_{ab}+{\cal R}_{ab})\nabla_a. 
\end{align}
Here we have used that both $F_A$ and ${\cal R}$ are Yang-Mills on TN$_k$. Hence 
\begin{align}\label{dog1}\nabla^*\nabla \nabla^{k}F_A&=    \nabla^{k}(\ep^i\ep^*_j{\cal R}_{ij}F_A - \ep^i\ep^*_j[F_{ij},F_A]) \nonumber\\
&   -2\sum_{m=0}^{k-1}\nabla^m[l(\ep^b)(F_{ab}+{\cal R}_{ab})\nabla_a\nabla^{k-1-m}F_A], 
\end{align} 
implying
\begin{align}\label{rabbit}
\frac{1}{2}\Delta| \nabla^{k}F_A|^2&= -|\nabla^{k+1}F_A|^2+\langle   \nabla^{k}(\ep^i\ep^*_j{\cal R}_{ij}F_A - \ep^i\ep^*_j[F_{ij},F_A]),\nabla^{k}F_A\rangle \\
&   -2\sum_{m=0}^{k-1}\langle\nabla^m[l(e^b)(F_{ab}+{\cal R}_{ab})\nabla_a\nabla^{k-1-m}F_A],\nabla^{k}F_A\rangle . \nonumber
\end{align} This yields the inequality for some $C_k>0$, 
\begin{align}\label{ksub1} \frac{1}{2}\Delta| \nabla^{k}F_A|^2 &\leq -|\nabla^{k+1}F_A|^2+C_k\sum_{m=0}^k |\nabla^{m} {\cal R}||\nabla^{k-m}F_A||\nabla^{k}F_A|\nonumber\\
&+C_k\sum_{m=0}^k |\nabla^{m}F_A||(\nabla^{k-m}F_A)^B||\nabla^{k}F_A |.
\end{align}
By \cite[Proposition A.2]{Minerbe10}, for all integers $q\geq 0$, $\exists C_{q,\mathcal{R}}<\infty$ such that
\begin{align}\|r^{3+q}\nabla^q\mathcal{R}\|_{L^\infty}<C_{q,\mathcal{R}}. 
\end{align}
We may similarly bound the covariant derivatives of $F_A$ as follows. 
 \begin{proposition}\label{killfourier} Let $A$ be a Yang-Mills connection with generic asymptotic holonomy, satisfying
$\|r^{\alpha}F_A\|_{L^2(M)}<\infty$, for some $\alpha\geq 0$.  
 Then for all integers  $ m\geq 0$,
\begin{align}\label{claimm}
\|r^{ \alpha+m}\nabla^mF_A\|_{L^2}<\infty, 
\end{align}
\begin{align}\label{claimm2} \|r^{\alpha+m+\frac{3}{2}}\nabla^mF_A\|_{L^\infty} &<\infty,& 
&\text{ and }& 
\|r^{ \alpha+m+1}(\nabla^mF_A)^B\|_{L^2}&<\infty.
\end{align}
\end{proposition}
\begin{proof} 
The proof is similar to \cite[Proposition A.2]{Minerbe10}. We induct on $m$. Suppose that we have shown \eqref{claimm} holds for all $m<q$ and if $q>1$  \eqref{claimm2} holds for all $m<q-1$. 

To obtain the pointwise bound of \eqref{claimm2} for $m=q-1$, we use  Corollary \ref{moserapp}. Define
$$f:=\sum_{m=0}^{q-1}\frac{1}{2}r^{2m}\Phi(\nabla^mF_A).$$
Then for $r$ large, using \eqref{rabbit}, we compute that there exist $\hat C_{q-1},\tilde C_{q-1}>0$ such that 
$$ \Delta f \leq  -\sum_{m=0}^{q-1}(\Phi(r^m\nabla^{m+1}F_A)- \frac{2m}{V}r^{2m-1}Q(\nabla_{\p_r}\nabla^mF_A,\nabla^mF_A))+\frac{\hat C_{q-1}}{r^{2}}f\leq \frac{\tilde C_{q-1}}{r^2}f,$$
via the Cauchy-Schwarz inequality. 
Hence, Corollary \ref{moserapp} yields for some constants $C_{q-1,1}, C_{q-1,2}, C_{q-1,3}>0$ and for all $x\in \IR^3$ with $R=\frac{|x|}{2}$ large, 
$$\|f\|_{L^\infty\left(B_{\frac{R}{2}}(x)\right)}\leq   \frac{C_{q-1,1}}{R^{3}}\int_{B_R(x)}fdv\leq  \frac{C_{q-1,2}}{R^{3+2\alpha}}\int_{B_R(x)}r^{2\alpha}fdv <\frac{C_{q-1,3}}{R^{3+2\alpha}}.$$
Hence 
\begin{align}\label{cold1}\|r^{2(q-1) +3+2\alpha}\Phi(\nabla^{q-1}F_A)\|_{L^\infty}<\infty.
\end{align}
 Lemma \ref{rote}  then implies $\|r^{q-1 +\frac{3}{2}+\alpha} \nabla^{q-1}F_A\|_{L^\infty}<\infty$,
and the pointwise estimate of \eqref{claimm2} for $m=q-1$ follows. 

Let $0\leq \phi\leq 1$ be a smooth function identically $1$ on $[0,1]$ and supported on $(-\infty,2)$. Let $\eta_{n,N}:= \min\{r^{q+\alpha },n^{q+ \alpha}\}\phi(\frac{r}{N})$.  
Then taking the inner product of  $\eta^2\nabla^{q-1}F_A$ with \eqref{dog1} (for $k=q-1$) yields :
\begin{align}
&\| \eta_{n,N}\nabla^{q}F_A\|^2_{L^2}  +2\langle  \eta_{n,N}\nabla^{q}F_A,d\eta_{n,N}\otimes \nabla^{q-1}F_A\rangle_{L^2}  \nonumber\\
&=  \Big\langle \nabla^{q-1}(\ep^i\ep^*_j{\cal R}_{ij}F_A - \ep^i\ep^*_j[F_{ij},F_A])\nonumber\\
  & -2\sum_{m=0}^{q-2}\nabla^m[l(e^b)(F_{ab}+{\cal R}_{ab})\nabla_a\nabla^{q-2-m}F_A,\eta_{n,N}^2\nabla^{q-1}F_A\Big\rangle_{L^2}.
\end{align}
Hence 
\begin{align}\label{dog2.1}
&\frac{1}{2}\| \eta_{n,N}\nabla^{q}F_A\|^2_{L^2}  \leq 2\||d\eta_{n,N}| \nabla^{q-1}F_A\|^2_{L^2} \nonumber\\
 &+C_{q-1}\sum_{m=0}^{q-1}\|r\eta_{n,N}|\nabla^{m} {\cal R}||\nabla^{q-1-m}F_A\|_{L^2}\|r^{-1}\eta_{n,N}\nabla^{q-1}F_A\|_{L^2}\nonumber\\
&+C_{q-1}\sum_{m=0}^{q-1} \|r\eta_{n,N}(|\nabla^{m}F_A||(\nabla^{q-1-m}F_A)^B\|_{L^2}\|r^{-1}\eta_{n,N}\nabla^{q-1}F_A\|_{L^2}\nonumber\\
&\leq 2\|(q+\alpha)|dr|r^{q+\alpha-1} \nabla^{q-1}F_A\|^2_{L^2} \nonumber\\
 &+C_{q-1}\sum_{m=0}^{q-1}C_{q,\mathcal{R}}\|r^{q+\alpha-2-m} \nabla^{q-1-m}F_A\|_{L^2}\|r^{q+\alpha-1}\nabla^{q-1}F_A\|_{L^2}\nonumber\\
&+C_{q-1}\sum_{m=0}^{q-1} \|r^{-m-\alpha-\frac{1}{2}}\eta_{n,N}(\nabla^{q-1-m}F_A)^B\|_{L^2} \|r^{q+\alpha-1}\nabla^{q-1}F_A\|_{L^2}.
 \end{align}
By Proposition \ref{r3/2}, for some $\kappa >0$, $\mathcal{E}$ has $\kappa-$generic holonomy in the complement of a compact set. Choosing $\delta = \frac{\kappa}{2}$ in Lemma \ref{poincholonomy}, there exists a compact set $K_\delta\subset \IR^3$ such that 
\begin{align}\label{poincap} \frac{1}{4}\| \eta_{n,N}\nabla^{q}F_A\|^2_{L^2} \geq \frac{1}{4}\|\sqrt{V}\eta_{n,N}\nabla_{\p_\tau}  \nabla^{q-1}F_A\|^2_{L^2} \geq \frac{\kappa^2}{16}\|\sqrt{V}\eta_{n,N} (\nabla^{q-1}F_A)^B\|^2_{L^2(K_\delta^c)}. 
\end{align}
Hence 
\begin{align}\label{dog2}
&\frac{1}{4}\| \eta_{n,N}\nabla^{q}F_A\|^2_{L^2}   \leq 2\|(q+\alpha)|dr|r^{q+\alpha-1} \nabla^{q-1}F_A\|^2_{L^2} \nonumber\\
 &+C_{q-1}\sum_{m=0}^{q-1}C_{q,\mathcal{R}}\|r^{q+\alpha-2-m}|\nabla^{q-1-m}F_A\|_{L^2}\|r^{q+\alpha-1}\nabla^{q-1}F_A\|_{L^2}\nonumber\\
&+C_{q-1}\sum_{m=1}^{q-1} \|r^{-m-\alpha-\frac{1}{2}}\eta_{n,N}(\nabla^{q-1-m}F_A)^B\|_{L^2}\|\|r^{q+\alpha-1}\nabla^{q-1}F_A\|_{L^2}\nonumber\\
&+\frac{\kappa^2}{16} \|r^{-\alpha-\frac{1}{2}}\eta_{n,N}(\nabla^{q-1}F_A)^B\|_{L^2(K_\delta^c)}^2-\frac{1}{4}\| \sqrt{V}\eta_{n,N}\nabla_{\p_\tau}(\nabla^{q-1}F_A)^B\|^2_{L^2}\nonumber\\
& +\frac{4C_{q-1}^2}{\kappa^2} \|r^{q+\alpha-1}\nabla^{q-1}F_A\|_{L^2}^2.
 \end{align}

The inductive hypothesis and \eqref{poincap} imply that every term on the right-hand side of \eqref{dog2}  is bounded above, uniformly for all $N$ and $n$.
 Hence, we may take the limit as $N,n\to\infty$ to deduce $r^{q+\alpha}\nabla^qF_A\in L^2,$  and by an additional application of \eqref{poincap}, $r^{q+\alpha}(\nabla^{q-1}F_A)^B\in L^2.$
The proposition now follows by induction. 

\end{proof}
\begin{coro}\label{cold2}
Let $A$ be a Yang-Mills connection with generic asymptotic holonomy, satisfying
$\|F_A\|_{L^2(M)}<\infty$.  
Then for all $j\in \IN$, and $r$ outside a compact set,
\begin{align}
|(\nabla^{ j} F_A)^B| < \frac{C(j,m)}{r^m},
\,\forall m\in \IN. 
\end{align}
\end{coro}
\begin{proof} For $r$ outside a compact set,  
$\Phi(r^{m+j+1+\frac{3}{2}} (\nabla^{ m+j+1} F_A)^B)$ is bounded by \eqref{cold1} (for $\alpha = 0$). Applying  Lemma \ref{poincholonomy} $m$ times yields $\Phi(r^{m+j +\frac{5}{2}} \nabla_{\frac{\p}{\p\tau}}^{j+1} F_A^B)$ is bounded. By the Sobolev embedding theorem for the circle, $r^{m+j +\frac{5}{2}} |(\nabla^{j} F_A)^B|$ is bounded. 
\end{proof}

\subsection{Quadratic Curvature Decay}\label{Sec:Quadratic} 
In this subsection we sharpen our curvature decay to quadratic decay. We first recall the Kato-Yau inequality for closed self-dual and anti-self-dual forms and a Hardy inequality for TN$_k$. 
\begin{lemma}[{{Kato-Yau inequality \cite{IN,Calderbank}}}]
Let $h$ be a closed anti-self-dual form, over a four-manifold, with coefficients in a Hermitian bundle, then for any unit vector $u$ 
\begin{equation}\label{sdkato}\frac{3}{2}|\nabla_uh|^2\leq |\nabla h|^2.\end{equation}
\end{lemma}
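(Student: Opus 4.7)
The plan is to prove this refined Kato-type inequality by a direct computation in an adapted orthonormal frame, using only the covariant closed condition $d_A h = 0$. Observe that anti-self-duality together with $d_A h = 0$ automatically gives $d_A^* h = -{*}d_A{*}h = {*}d_A h = 0$, so co-closedness is not needed as a separate hypothesis.

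First I would fix the evaluation point $p \in M$ and choose a local orthonormal frame $\{e_1, e_2, e_3, e_4\}$ normal at $p$ with $e_1(p) = u$, together with a local trivialization of the Hermitian bundle in which the connection form vanishes at $p$. With the standard orientation, the space of anti-self-dual $2$-forms at $p$ admits the basis
\[
\omega_1 = e^{12} - e^{34}, \qquad \omega_2 = e^{13} + e^{24}, \qquad \omega_3 = e^{14} - e^{23},
\]
and we may write $h = \sum_{a=1}^3 h_a\,\omega_a$ for sections $h_a$ of the Hermitian bundle. Denote by $y_{ia}$ the value of $\nabla_{e_i} h_a$ at $p$; these twelve fiber vectors encode $\nabla h(p)$, with $|\nabla h|^2(p) = \sum_{i,a}|y_{ia}|^2$.

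Next I would compute $d_A h|_p = \sum_{i,a} y_{ia}\, e^i \wedge \omega_a$, expand in the basis $\{e^{ijk}\}$ of $\Lambda^3(T_p^*M)$, and equate each of the four coefficients to zero. A short table of the twelve products $e^i \wedge \omega_a$ reveals that the resulting linear system expresses each $y_{1a} = \nabla_u h_a$ ($a=1,2,3$) as a difference of two $y_{ib}$ with $i \geq 2$, together with one auxiliary relation among the $y_{ib}$ for $i \geq 2$ that plays no role below. Crucially, the six $y_{ib}$ appearing on the right-hand sides of the three formulas for $y_{11}, y_{12}, y_{13}$ are pairwise distinct.

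Finally, apply $|A - B|^2 \leq 2(|A|^2 + |B|^2)$ to each of the three expressions for $y_{1a}$ and sum, obtaining
\[
|\nabla_u h|^2 = \sum_a |y_{1a}|^2 \leq 2 \sum_{\substack{i \geq 2 \\ a}} |y_{ia}|^2 \leq 2\bigl(|\nabla h|^2 - |\nabla_u h|^2\bigr),
\]
which rearranges to $\frac{3}{2}|\nabla_u h|^2 \leq |\nabla h|^2$. The only mildly technical step is the bookkeeping of the $d_A h = 0$ expansion, which is entirely routine in dimension four. A more conceptual route, following Calderbank, would identify $\nabla h$ as a section of the twistor summand of $T^*M \otimes \Lambda^2_-$, whose refined Kato constant is $2/3$; but in this four-dimensional setting the direct computation is the most efficient.
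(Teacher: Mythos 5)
Your proof is correct and follows essentially the same route as the paper: the four linear relations you extract from $d_Ah=0$ in the anti-self-dual basis $\{\omega_a\}$ are exactly the paper's Bianchi/ASD component identities, and the final step (splitting each $|y_{1a}|^2$ via $|A-B|^2\leq 2(|A|^2+|B|^2)$ over six pairwise distinct terms with $i\geq 2$) is the same estimate. The only cosmetic point is the normalization $|\omega_a|^2=2$, which rescales $|\nabla h|^2$ and $|\nabla_u h|^2$ by the same factor and so does not affect the inequality.
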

\begin{proof}
 Let $\{e_1,e_2,e_3,e_4\}$ be an oriented orthonormal frame. The bundle-valued form $h$ is closed, therefore $d_Ah=0$ and this Bianchi identity, combined with anti-self-duality, imply 
\begin{align*}
-h_{12;1}+ h_{13;4}+h_{41;3} &= 0,& 
h_{12;2}+ h_{13;3}+h_{14;4} &= 0,\\
h_{12;3}+ h_{31;2}-h_{14;1} &= 0,&
h_{12;4}- h_{31;1}+h_{41;2} &= 0.
\end{align*}
Hence 
$$|h_{12;1}|^2 + |h_{13;1}|^2 + |h_{14;1}|^2 \leq 2(|h_{13;4}|^2+|h_{41;3}|^2 + |h_{12;4}|^2+|h_{41;2}|^2+|h_{12;3}|^2+ |h_{31;2}|^2).$$
Hence 
$$\frac{3}{2}|\nabla_1h|^2\leq |\nabla h|^2.$$
Choose $e_1 = u$, and the lemma follows.  
\end{proof}
Let, as usual, $H^2_1(\mathrm{TN}_k)=\{f\in L^2(\mathrm{TN}_k):df\in L^2(\mathrm{TN}_k)\}.$  Similarly define $H^2_1(\mathrm{TN}_k,W)$ for any Hermitian vector bundle equipped with a connection. We will simply write $H^2_1$ when the bundle and connection are clear. 
 \begin{lemma}[Hardy inequality for TN$_k$]
For all $ f\in H^2_1(\mathrm{TN}_k),$ 
\begin{equation}\label{hardy2}
\frac{1}{4}\left\|\frac{f}{r\sqrt{V}}\right\|^2\leq \left\|\frac{\p f}{\p r}\right\|^2.
\end{equation}
\end{lemma}
\begin{proof}
The proof is the same as for $\IR^3$ and extends to any ALF space by a minor modification of the proof of Proposition 3.7 in \cite{DegStern}. In the notation of \eqref{defPhi}, we write 
$$\int_M\left|\frac{f}{r\sqrt{V}}\right|^2dv = \int_{\IR^3}\Phi\left(\frac{f}{r}\right)dx.$$
Hardy's inequality for $\IR^3$ followed by Kato's inequality for $L^2(S^1)$ gives 
$$\frac{1}{4}\int_{\IR^3}\Phi\left(\frac{f}{r}\right)dx\leq \int_{\IR^3}\left|\frac{\p\sqrt{\Phi(f)}}{\p r}\right|^2dx\leq \int_{\IR^3} \Phi\left(\frac{\p f}{\p r}\right)Vdx =\left\|\frac{\p f}{\p r}\right\|^2.$$
\end{proof}

\begin{theo}\label{lucky}
Let $A$ be an instanton with generic asymptotic holonomy. Then $ \|r^2  F_A \|_{L^\infty}<\infty. $ 
\end{theo}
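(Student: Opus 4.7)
The plan is to bootstrap from the $r^{-3/2}$ pointwise decay of Proposition~\ref{theo1} to quadratic decay via a weighted Bochner argument that now has available two inputs not at our disposal at the sub-quadratic stage: the refined Kato--Yau inequality $|\nabla F_A|^2 \geq \tfrac{3}{2}|\nabla_u F_A|^2$ (available because the instanton curvature is closed and anti-self-dual) and the Hardy inequality for TN$_k$. The crucial point is that with the a priori $|F_A|\leq Cr^{-3/2}$ already secured, both the Riemann term $|R| = O(r^{-3})$ and the cubic commutator $|[F_A,F_A]| = O(r^{-3})$ in the Bochner formula (\ref{bochner}) align exactly with the lower-order remainder $C\|(r{+}1)^{-3/2}f\|^2$ of the Hardy inequality (\ref{hardyr}) and can be absorbed as perturbations, whereas they would have been prohibitively large without the $r^{-3/2}$ decay.

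I would work outside a large compact set where $A$ has $\kappa$-generic holonomy, and split $F_A = F_A^Z + F_A^B$ along the centralizer decomposition from Definition~\ref{genconnKappa}. Along each fiber, the generic-holonomy spectral gap (\ref{genhol1}) gives a mass term $\beta^2\kappa^2\|\eta F_A^B\|^2$ that absorbs all commutator contributions involving $F_A^B$, as in (\ref{*}). The remaining $[F_A^Z,F_A^Z]$ piece vanishes pointwise in the limit where the centralizer is abelian, and is in any case controlled by the cubic decay. I then pair the Bochner identity with $\eta^2 F_A$, for $\eta = r^s\chi$ and $\chi$ a cutoff supported in the generic-holonomy region, and estimate the left-hand side using Kato--Yau applied along the unit radial direction, then the Hardy inequality applied to the scalar $\eta|F_A|$. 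The refined Kato--Yau constant $\tfrac{3}{2}$ combines with the Hardy constant $\tfrac{1}{4}$ to give an effective coefficient that, compared against the weight-derivative term $|d\eta|^2 \sim s^2r^{2s-2}/V$ with $V\to l$ at infinity, closes for all $s$ strictly less than $2$. Iterating and letting $s\to 2^-$ yields $r^s F_A \in L^2$ for every $s < 2$.

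The final step is to promote this weighted $L^2$ bound to the pointwise statement $\|r^2 F_A\|_{L^\infty}<\infty$ via a localized Moser iteration with radial weights, patterned after Proposition~\ref{Thm:Linf} and the local Moser argument of Lemma~\ref{curvpoint}, but now run on annuli of radius comparable to $r$ where the Sobolev inequality (\ref{minerbetn}) provides the requisite scaling. The main obstacle I anticipate is the delicate bookkeeping ensuring that the coefficient on the left in the weighted $L^2$ inequality remains strictly positive uniformly as $s\to 2^-$: without the Kato--Yau refinement the ordinary Kato inequality would fall short, and without the precise $\tfrac{1}{4}$ in Hardy the balance against the derivative of $r^s$ would not permit reaching the quadratic threshold. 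The fact that the Hardy remainder scales exactly like $r^{-3/2}$ — matching both the Riemann decay and the a priori curvature bound from Proposition~\ref{theo1} — is not a coincidence but a structural feature of the TN$_k$ geometry that makes the iteration genuinely closed, and is what ultimately forces the sharp $r^{-2}$ rate.
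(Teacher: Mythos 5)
Your strategy founders on its central quantitative claim: that the weighted Bochner argument with $\eta=r^s\chi$ closes for every $s<2$, so that $r^sF_A\in L^2$ for all $s<2$. This is false, not merely unproved: quadratic decay is sharp (the abelian instantons with $\lambda_j\neq 0$ in Theorem~\ref{asyholo} have $|F_A|\sim r^{-2}$), and since the volume of the region $\{r\sim R\}$ grows like $R^{3}$ on an ALF space, such curvature satisfies $r^sF_A\in L^2$ only for $s<\tfrac12$. So no correct argument can deliver $s<2$, and the numerology you invoke does not survive inspection: the Hardy gain $\tfrac14\|\eta F_A/r\|^2$ must be weighed against $\||d\eta|F_A\|^2$, which is comparable to $s^2\|\eta F_A/r\|^2$ once the factors of $V$ cancel, and the refined Kato--Yau inequality cannot legitimately be applied to $\eta F_A$, which is neither closed nor co-closed; even the most optimistic combination of the constants $\tfrac32$ and $\tfrac14$ gives a threshold well below $1$, never $2$. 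In the paper the weighted stage indeed stops at exponent $<\tfrac12$ (its $p<1$), and even that requires an ingredient absent from your sketch: a lower bound for the non-radial derivative term $\|\eta\nabla^0F_A\|^2$, obtained by integrating the Bianchi identity (\ref{bianchi}) by parts in the radial direction, which produces the coefficient $\tfrac12[1-p^2+p^2(t_{n,p}-1)^2]$ of (\ref{hreductp}); Hardy's $\tfrac14$ alone is beaten by the weight derivative. A further inaccuracy: the commutator term in the Bochner pairing is trilinear, so with only Proposition~\ref{theo1} in hand it is $\langle[F_{ij},\eta F_{jk}],\eta F_{ik}\rangle=O(r^{-3/2})|\eta F_A|^2$, much larger than the Hardy remainder $O(r^{-3})|\eta F_A|^2$; it does not ``align'' with it. The paper controls this term through the $Z\oplus B$ structure together with the fact, extracted from the proof of Proposition~\ref{theo1}, that $rF_A^B\in L^2$.

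Even granting the correct weighted bounds ($s<\tfrac12$), your final step falls short of the theorem: Moser iteration on annuli of scale $r$ then yields only $|F_A|=O(r^{-3/2-s})$, i.e.\ $r^{-2+\epsilon}$, never the endpoint. Reaching the sharp exponent is exactly the delicate part of the paper's proof: the uniform estimate $\int|F_A|^2r^{2p-1}dv\le c_3/(1-p)$ of (\ref{unifp}), the choice $p=1-1/\ln n$ giving the logarithmic growth (\ref{loggrow}), a pigeonhole over dyadic annuli combined with local Moser iteration to obtain $r^2|F_A|\le C(T)$ on infinitely many spherical annuli, and finally a maximum-principle argument---this is where the Kato--Yau inequality is actually used, pointwise at a critical point of $r^{2+2p}|F_A|^2$, where no cutoff interferes---to exclude interior maxima between the good annuli. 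Without a substitute for these steps, your proposal establishes at best $|F_A|=O(r^{-2+\epsilon})$, not $\|r^2F_A\|_{L^\infty}<\infty$.
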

\begin{proof}
We begin  with an integrated Bochner formula: 
\begin{equation}\label{again}0 = \|\nabla (\eta F_A) \|^2 -\||d\eta|F_A\|^2    - \langle \ep^i\ep^*_j{\cal R}_{ij}\eta F,\eta F\rangle
- \langle [F_{ij},\eta F_{jk}],\eta F_{ik}\rangle. \end{equation}
This equation  holds for all $\eta$ with  $\eta F_A\in H^2_1.$   
Choosing  $\eta =\eta_n(r) = r_n^{p}r^{\frac{1}{2}}$ with $p\leq 1$ yields 
\begin{equation}\label{logagain}  
\|\nabla (r_n^{p}r^{\frac{1}{2}} F_A) \|^2 =\left\||d(r_n^{p}r^{\frac{1}{2}})|F_A\right\|^2  + \langle \ep^i\ep^*_j{\cal R}_{ij}r_n^{2p}r  F,  F\rangle
+ \langle [F_{ij},r_n^{2p}r F_{jk}],  F_{ik}\rangle. 
\end{equation}
Write 
$$ \|\nabla  (\eta F_A) \|^2 =  \|\nabla^0 (\eta F_A) \|^2 +  \|\nabla_{\hat{e}_1} (\eta F_A) \|^2,$$
where $\hat{e}_1$ is a unit vector in the radial direction and $\nabla^0$ denotes the summand of the covariant derivative in the directions orthogonal to $\hat{e}_1$. The Hardy inequality (\ref{hardy2}) and the Kato inequality give  
\begin{equation}\label{hardy} \|\nabla_{\hat{e}_1} (\eta F_A) \|^2\geq \frac{1}{4} \left\|\frac{\eta F_A}{r\sqrt{V}} \right\|^2    .\end{equation}
 By Proposition \ref{r3/2} and  Corollary \ref{cold2},  we have 
$$
\langle [F_{ij},\eta_n F_{jk}],\eta_n F_{ik}\rangle(x)\leq 
C_1|F_A|\left|F^B\right|^2 \eta_n^2 \in L^1,
$$ 
with $L^1$ bound independent of $n$ since $|F^B|= O(r^{-m}),\forall m$.  Combining this estimate and (\ref{hardy}) with (\ref{logagain}) gives 
\begin{equation}\label{hreduct}\frac{1}{4} \|\frac{r_n^{p}}{\sqrt{rV}} F_A  \|^2+\|r_n^{p}r^{\frac{1}{2}}\nabla^0   F_A \|^2 \leq   \||d(r_n^{p}r^{\frac{1}{2}})|F_A\|^2  + C_2. \end{equation}
Here we have used the cubic decay of the Riemann curvature ${\cal R}_{ij}$.
 
Now we use the Bianchi identity to estimate $\|r_n^{p}r^{\frac{1}{2}}\nabla^0   F_A \|^2$ from below. 
{Let $\hat{e}_1,\hat{e}_2,\hat{e}_3,\hat{e}_4$ be an oriented orthonormal frame around a point $x\in \mathrm{TN}_k$, with $\hat{e}_1$ the unit radial vector field. The  Bianchi identity implies }
\begin{equation}\label{bianchi}0 = \langle  F_{12;3}- F_{13;2} - F_{14;1},F_{14}\rangle  . \end{equation}
Summing over cyclic permutations and applying Cauchy-Schwartz yields
\begin{equation}\label{bianchicyc}|\nabla^0F_A||F_{A}| \geq  -\frac{1}{2}\hat e_1|F_A|^2  . \end{equation}
  Multiply this inequality by $r_n^{2p}V^{-\frac{1}{2}}$ and integrate to obtain, for some  $c>0$ independent of $n$ and $p$,
\begin{equation}\label{nocrosspn} \sqrt{2}\|r_n^{p}r^{1/2}  \nabla^0 F_{A}\|  \| r_n^{p}r^{-1/2}V^{-1/2}F_{A}\|  \geq  
 \int |F_{A}|^2 (p\, t_{n,p} +1)\frac{r_n^{2p }}{rV}dv - c, \end{equation}
where, for $\chi_n$ the characteristic function of $\{x:r(x)\leq n\},$ 
\begin{align}\label{tnpdef}t_{n,p} = \frac{\int|F_A|^2\chi_nr_n^{2p }r^{-1}V^{-1}dv}{\int|F_A|^2r_n^{2p }r^{-1}V^{-1}dv}.
\end{align}
In integrating \eqref{bianchicyc}, we have used 
\begin{align}\label{intbyparts}
\mathrm{div} \left(\frac{r_n^{2p}}{2\sqrt{V}}|F_{1m}|^2\hat{e}_1\right) =   r_n^{2p}V^{-\frac{1}{2}} \frac{1}{2}\hat e_1|F_A|^2
&+ (p\chi_n+1)\frac{r_n^{2p }|F_{1m}|^2}{rV}\\
    &+ O\left(|F_{1m}|^2  r_n^{2p }r^{-2}\right).\nonumber
\end{align}

For fixed $n$, the integral of the divergence term in \eqref{intbyparts} vanishes since $F_A\in H^2_1$ (and can be seen even more readily from the bound for $r^{\frac{3}{2}}F_A$ obtained in Proposition \ref{r3/2}). 
Squaring both sides yields 
\begin{equation}\label{bianchiestpn}  
\| r_n^{p}r^{1/2} \nabla^0 F_{A}\|^2     \geq  
 \frac{(p\, t_{n,p}  +1)^2}{2  }\int |F_{A}|^2  r_n^{2p }r^{-1}V^{-1}dv - c_2. 
 \end{equation}
Inserting (\ref{bianchiestpn}) into (\ref{hreduct}) gives, for some  $c_3>0$ independent of $n$ and $p$,
\begin{equation}\label{hreductp}  
[1-p^2  + p^2 (t_{n,p} -  1)^2]\int |F_{A}|^2  r_n^{2p }r^{-1} V^{-1} dv \leq   c_3. 
\end{equation}
Hence, for all $p<1$, we may take the limit as $n\to \infty$ in (\ref{hreductp}) to deduce 
\begin{equation}\label{unifp}
  \int |F_{A}|^2  r^{2p-1}V^{-1} dv \leq   \frac{c_3}{1-p}. 
\end{equation}
 Let $p = 1 - \frac{1}{N}$ in Equation (\ref{unifp}), then
\begin{equation}\label{loggrow}\int_{r\leq e^N} |F_{A}|^2r   V^{-1} dv \leq   c_3e^{2}N.\end{equation}
Hence
\begin{equation}\label{Nloggrow}\frac{1}{N}\sum_{m=0}^{N-1}\int\limits_{e^m\leq r\leq e^{m+1}} |F_{A}|^2r   V^{-1} dv \leq   c_3e^{2} .\end{equation}
Thus, the average value of 
$\int_{e^m\leq r\leq e^{m+1}} |F_{A}|^2r V^{-1}   dv$ is less than $c_3e^{2}$. Set 
$$J:=\left\{m \in\IN:  \int_{e^m\leq r\leq e^{m+1}} |F_{A}|^2r  V^{-1}  dv\leq 2c_3e^{2}\right\}.$$ 
Since \eqref{Nloggrow} holds for all $N$ large, $J$ is an infinite set. For each $m\in J$, $m$ sufficiently large,  inequality \eqref{moserkap} implies for each $(x,y)\in \IR^3\times \IR^3$  with $|y| = \frac{e^{m+1}+e^m}{2}$ and 
$|x-y| < \frac{e^{m+1}-e^m}{2}=:R$ that 
\begin{align}\label{Tbound}\Phi( F_A )(x)&\leq \frac{C}{(\frac{e^{m+1}-e^m}{2}- |x-y|)^3}\int_{\pi_k^{-1}(B_R(y))} |F_{A}|^2 V^{-1}dv\nonumber\\
&\leq \frac{Ce^{-m}}{(\frac{e^{m+1}-e^m}{2}- |x-y|)^3} 2c_3e^2.
\end{align}  
In particular, there exists $\tilde C>0$ independent of  $m\in J$ such that for all $x$ with $|x|\in [\frac{e^{(m+1)}+3e^m}{4}, \frac{3e^{(m+1)}+e^m}{4}]$, $\Phi( F_A )(x) \leq \frac{\tilde C }{|x|^4} .$
Lemma \ref{rote} now implies 
\begin{align}\label{cold3}|F_A|^2(x)\leq \frac{\tilde C_3 }{|x|^4},
\end{align} 
$\forall x, m$ with $|x|\in [\frac{e^{(m+1)}+3e^m}{4}+\frac{\delta(M)}{2}, \frac{3e^{(m+1)}+e^m}{4}-\frac{\delta(M)}{2}] $, and 
$m\in J$.
Because (\ref{Nloggrow}) holds for all large $N$,  there is a uniform pointwise upper bound for $r^2|F_A|$ on infinitely many spherical annuli.  If $r^{1+p}|F_A|$ is not uniformly bounded for all  $p\leq 1,$  then it must achieve local maxima between some of these annuli, for some $p$. At  a critical point of $r^{1+p}|F_A|$ we have 
\begin{equation}\label{crit}0 = (1+p)|F_A|  + r\frac{\p |F_A|}{\p r}.\end{equation}
By the  the anti-self-dual Kato-Yau inequality (\ref{sdkato}) and the Kato inequality, \eqref{crit} implies that at a critical point
\begin{equation}\label{crit2}|\nabla F_A|^2  \geq  \frac{3}{2V}|\nabla_{\frac{\p}{\p r}} F_A|^2 \geq 
  \frac{3(1+p)^2|F_A|^2}{2Vr^2}  .\end{equation} 
At  a maximum point of $r^{1+p}|F_A|$ we have 
\begin{align}\label{maxprinc}
0\geq &-\frac{1}{2}\Delta(|F_A|^2r^{2+2p}) = r^{2+2p}|\nabla F_A|^2 +\frac{(2p^2+5p+3)}{V}r^{2p}|F_A|^2\nonumber\\
 &+ \frac{(2+2p)r^{2p+1}}{V}\frac{\p|F_A|^2}{\p r} + O(r^{2p-1}|F_A|^2) - r^{2+2p}\langle [F_{ij},F_{jk}],F_{ik}\rangle\nonumber\\
&= r^{2+2p}|\nabla F_A|^2 - \frac{(2p^2+3p+1)r^{2p}}{V}  |F_A|^2 + O(r^{2p-1}|F_A|^2) \nonumber\\
&\geq  r^{ 2p}\frac{ (1-p^2)|F_A|^2}{2V }   + O(r^{2p-1}|F_A|^2) ,
\end{align}
by (\ref{crit2}). Dividing by $r^{2p}$ we find that  $\exists \lambda>0$ such that 
\begin{equation}\label{maxprinc2}  
(1-p)|  F_A|^2 \leq  \lambda r^{-1}|F_A|^2. 
\end{equation}
Hence, at a local maximum, we must have $1-p \leq  \lambda r^{-1}$. 
Let $1<N_0\in J$ such that $ \lambda r^{-1}< \frac{1}{\ln(r)}, \forall r>e^{N_0}$.
Let  $N-1\in J$, with $N-1>N_0$. 
For $p = 1-\frac{1}{N}$, $(1-p) \geq \frac{1}{\ln(r)}$ when $r\leq e^{N}$. 
Therefore $r^{2+2p} |  F_A|^2$ has no local maximum  for $e^{N_0}\leq r\leq e^{N}$.  
Hence  
\begin{multline} 
\| r^{1+p} F_A\|_{L^\infty\left(\pi_k^{-1}(\{x:e^{N_0}\leq |x|\leq \frac{e^N+e^{N+1}}{2}\})\right)}\\
\leq \max\left\{ \| r^{1+p} F_A\|_{L^\infty\left(\pi_k^{-1}(|x|=e^{N_0}\right)}, \sqrt{\tilde C_3}\right\}.
\end{multline}
For $r\leq e^N,$ $r^{1+p}\geq r^2e^{-1}.$ Hence we deduce 
\begin{align} &\| r^{2}  F_A \|_{L^\infty(M)}\leq e\max\left\{ \| r^{2} F_A\|_{L^\infty(\pi_k^{-1}(|x|=e^{N_0})}, \sqrt{\tilde C_3}\right\},
\end{align}
and the theorem follows. 
\end{proof}

\section{Asymptotic Form of an Instanton}\label{Sec:Asymp} 

Consider a finite energy Yang-Mills connection $A$ with generic asymptotic holonomy. By Proposition \ref{r3/2}, there exists a compact $K\subset \IR^3$,  with $  \{\nu_\sigma\}_{\sigma=1}^k\subset K$, such that $\forall x\in K^c$, the eigenvalues  $\{e^{ 2\pi i\mu_a(x)}\}_a$  of $H_p$, with $p\in \pi_k^{-1}(x)$ are distinct. 
 Let $U\subset\IR^3\setminus K$ be an open contractible set, and let $(x,\tau)\in U\times [0,2\pi  )$ be local coordinates for $\pi_k^{-1}(U)$.
Let $\{v_a\}_a$ be a smooth unitary holonomy eigenframe of $\mathcal{E}$ over the section of the circle bundle defined by
$$\Sigma:= \{(x,0):x\in U\}.$$ 
We extend this eigenframe by parallel translation (over one period in positive $\tau$ direction) to a frame over $\pi_k^{-1}(U)$, albeit discontinuous at $\Sigma.$ 
Define
$$\{e^{-i \tau\mu_a(x)}v_a(\tau)=:w_a(\tau)\}_a,$$ 
thus obtaining a smooth and, of course, continuous  frame of $\mathcal{E}$ over $\pi_k^{-1}(U).$ In the frame $\{w_a\}$, the connection matrix, $A\left(\frac{\partial}{\partial\tau}\right)$, is diagonal, with 
\begin{align}\label{Eq:parmu}
A\left(\frac{\partial}{\partial\tau}\right)=-i\, \mathrm{diag}(\mu_a).
\end{align} 
Note, that this choice of the frame depends on the choice of the log branch in defining $\mu_a$ for each holonomy eigenvalue $e^{2\pi i\mu_a}.$ We reconsider this choice at the end of this section.

It is  useful to observe that for $a\not = b$, 
\begin{align}\label{asrequested}\langle F_A(\p_\tau,\p_j)w_a,w_b\rangle=\langle F_A^B(\p_\tau,\p_j)w_a,w_b\rangle = O(r^{-m}),
\end{align}
for all $m\in \IN$, by Corollary \ref{cold2}.

Define the projection operator acting on $L^2(\pi_k^{-1}(x),\Lambda^2T^*M\otimes ad(\mathcal{E}))$ (for $|x|$ large) by 
\begin{align}\label{pinought}
\Pi_0:= \frac{1}{2\pi i}\oint_{C}(z-i\nabla_{\frac{\p}{\p \tau}})^{-1}dz,\end{align}
where $C$ is a small circle around $0$ in $\mathbb{C}$ with radius $\rho<\frac{1}{2}\min\{  |\mu_a-\mu_{a'}+m| : m\in\IZ \text{ and } a\neq a'\}$. For sufficiently large $|x|$,  $\Pi_0$ is the projection onto the $O(r^{-2})$ eigenspace of the self-adjoint operator $i\nabla_{\frac{\p}{\p \tau}}$. To see this, we recall that as noted in Lemma \ref{verygeneric}, the holonomy of $\Lambda^*T^*M$  around the circle fibers is $I+ {O}(r^{-2})$. On the other hand, the holonomy eigenvalues on $ad(\mathcal{E})$ are $\{e^{2\pi i(\mu_a-\mu_b)}, 1\leq a,b\leq n\} .$ The tensor product $\Lambda^*T^*M\otimes ad(\mathcal{E})$ then has  holonomy eigenvalues $\{e^{2\pi i(\mu_a-\mu_b+O(r^{-2}))}, 1\leq a,b\leq n \}.$ Hence  by  Lemma \ref{poincholonomy}, the eigenvalues of $i\nabla_{\p_\tau}$ are $\{\mu_a-\mu_b+m+O(r^{-2}):m\in\IZ, 1\leq a,b\leq n\}.$ For $r$ sufficiently large, we see that only $O(r^{-2})$ eigenvalues are contained within the contour $C$.  
Set 
$$\Pi_1 = I-\Pi_0.$$
The subbundle  $\Lambda^*T^*M\otimes B$ is holonomy invariant with holonomy eigenvalues 
$\{e^{2\pi i(\mu_a-\mu_b+O(r^{-2}))}, a\not = b\} .$ Thus all the eigenvalues of $i\nabla_{\p_\tau}$ on this summand have norm larger than $\rho$ for $r$ large. Hence 
\begin{align}\label{analog}\Pi_1F_A = F_A^B + \Pi_1F_A^Z.\end{align}
\begin{proposition}\label{polydecay}
\begin{align}
|\nabla_{\p_\tau}\Pi_1F_A|+|\Pi_1F_A| < \frac{C}{r^m}, 
\forall m\in \IN
\end{align}
\end{proposition}
\begin{proof}  The proof is essentially the same as that of Corollary \ref{cold2}.   
\end{proof}
 
\begin{coro}\label{mpolydecay}
\begin{align}
|\nabla_{\p_\tau}F_A|  = O(r^{-4}).
\end{align}
\end{coro}
\begin{proof}  By construction, 
$|\nabla_{\p_\tau}\Pi_0F_A | =O(r^{-2}|F_A|)=O(r^{-4}).$ Hence \\$|\nabla_{\p_\tau}F_A|\leq |\nabla_{\p_\tau}\Pi_0F_A |+|\nabla_{\p_\tau}\Pi_1F_A |= O(r^{-4}).$
\end{proof}

Now we consider the variation of the $\mu_a$ as a function of $x\in\mathbb{R}^3$.  Let 
$ f_k:=\frac{\p}{\p x^k}-\omega(\frac{\p}{\p x^k})\frac{\p}{\p \tau},$  $1\leq k\leq 3,$ with $\omega$ of Eq.~\eqref{varpi}, denote the horizontal lifts of the coordinate vector fields of $\IR^3$. Using  $\nabla_{f_k}\nabla_{\p_\tau}w_a = F(f_k,\p_\tau)w_a + \nabla_{\p_\tau}\nabla_{f_k}w_a,$ \eqref{Eq:parmu}, and integration by parts, we have
\begin{align}\label{c1}
&\p_k\, \mu_a(x) =  \p_k\frac{i}{2\pi} \int_{\pi_k^{-1}(x)}\langle \nabla_{{\p_\tau}} w_a,w_a\rangle d\tau \nonumber\\
&=  \frac{i}{2\pi} \int_{\pi_k^{-1}(x)}[\langle F(f_k,\p_\tau) w_a,w_a\rangle+\langle\nabla_{{\p_\tau}} \nabla_{f_k} w_a,w_a\rangle +\langle \nabla_{{\p_\tau}} w_a,\nabla_{f_k}w_a\rangle]d\tau \nonumber\\
&=  \frac{i}{2\pi} \int_{\pi_k^{-1}(x)}[\langle F(f_k,\p_\tau) w_a,w_a\rangle+i\mu_a\langle \nabla_{f_k} w_a,w_a\rangle +i\mu_a\langle  w_a,\nabla_{f_k}w_a\rangle]d\tau \nonumber\\
&=  \frac{i}{2\pi} \int_{\pi_k^{-1}(x)} \langle F(f_k,\p_\tau) w_a,w_a\rangle d\tau .
\end{align}
Hence \begin{equation}\label{eibound}|d\mu_a|<\frac{c}{r^2}.\end{equation} 
The final integrand in \eqref{c1} is almost constant in $\tau$ since 
\begin{align}&\frac{\p}{\p\tau}\langle F_A(\p_\tau,\p_j)w_a,w_a\rangle 
 = \langle (\nabla_{\frac{\p}{\p\tau}}F_A)(\p_\tau,\p_j)w_a,w_a\rangle\nonumber\\
&
+\langle F(\nabla_{\p_\tau}\p_\tau,\p_j)w_a,w_a\rangle+\langle F(\p_\tau,\nabla_{\p_\tau}\p_j)w_a,w_a\rangle = O(r^{-4}),\end{align}
by Corollary \ref{mpolydecay}.

Hence     
\begin{align}\label{monopole}id\mu_a = \langle F(\p_\tau,\cdot)w_a,w_a\rangle + O(r^{-4}).
\end{align}  
 We also compute 
\begin{multline}\label{deltaest1}
2\pi i \Delta_{\IR^3}\mu_a(x) =  \sum_{k=1}^3\p_k\int_{\pi_k^{-1}(x)} \langle F(f_k,\p_\tau) w_a,w_a\rangle d\tau  \\
=  \sum_{k=1}^3\left(  \int_{\pi_k^{-1}(x)} \langle (\nabla_{f_k}F)(f_k,\p_\tau) w_a+F(\nabla_{f_k}f_k,\p_\tau) w_a+F(f_k,\nabla_{f_k}\p_\tau) w_a,w_a\rangle d\tau\right.\\
+\left. \int_{\pi_k^{-1}(x)}[\langle F(f_k,\p_\tau) \nabla_{f_k}w_a,w_a\rangle+\langle F(f_k,\p_\tau) w_a,\nabla_{f_k}w_a\rangle] d\tau\right). 
\end{multline}
The Yang-Mills equation implies the vanishing of $\sum_{k=1}^3(\nabla_{f_k}F)(f_k,\p_\tau)$. Since $|w_a|^2 = 1$, we can write $\nabla_{f_k}w_a = \gamma_{ka}^bw_b$ with $\gamma_{ka}^a\in i\IR.$  
Using Prop.~\ref{polydecay} or \eqref{asrequested}, for  $m=6$ gives
\begin{align}
&\int_{\pi_k^{-1}(x)}[\langle F(f_k,\p_\tau) \nabla_{f_k}w_a,w_a\rangle+\langle F(f_k,\p_\tau) w_a,\nabla_{f_k}w_a\rangle] d\tau\nonumber\\
&= \int_{\pi_k^{-1}(x)}[\langle(\Pi_0F)(\p_k,\p_\tau) \gamma_{ka}^aw_a,w_a\rangle+\langle (\Pi_0F)(\p_k,\p_\tau) w_a,\gamma_{ka}^aw_a\rangle] d\tau+O(r^{-6})\nonumber\\
&=  O(r^{-6}).
\end{align}

Computing covariant derivatives in the remaining two terms in \eqref{deltaest1} we conclude
\begin{align}\label{deltaest2}
&\Delta_{\IR^3}\mu_a(x) =    \frac{i}{2\pi} \int_{\pi_k^{-1}(x)} \langle F( f_k 
    ,\frac{V_k}{V}\p_\tau+\frac{1}{2}V^{-2}(\ast dV)_{jk} f_j ) w_a,w_a\rangle  d\tau +O(r^{-6}). 
\end{align}
Rotating coordinates (preserving orientation)  at $x$ so that  $\nabla V$ is in the direction opposite to $\p_1$, we  rewrite \eqref{deltaest2} as 
\begin{align}\label{deltaest3}
&-\Delta_{\IR^3}\mu_a(x) =    \frac{-i}{2\pi} \int_{\pi_k^{-1}(x)} \langle\frac{V_1}{V}[F( f_1 
    , \p_\tau )-F( f_2 
    , V^{-1 } f_3 )] w_a,w_a\rangle  d\tau +O(r^{-6}). 
\end{align}
 If the connection is not only Yang-Mills, but is self-dual, then this yields $\Delta_{\IR^3}\mu_a = O(r^{-6})$. Since $V_1$ is $O(r^{-2})$, we have in the anti-self-dual case :  
\begin{align}\label{deltaestasd}
& \Delta_{\IR^3}\mu_a(x) =    \frac{ i}{\pi} \int_{\pi_k^{-1}(x)} \langle(\frac{V_1}{V}F( f_1 
    , \p_\tau ) w_a,w_a\rangle  d\tau +O(r^{-6})=O(r^{-4}). 
\end{align}

We next use the bounds \eqref{deltaest2} and \eqref{deltaestasd} to obtain information about the asymptotic behavior of $\mu_a$. 
Let $x\in\IR^3$, with $|x|= 2R$. Let $\eta$ satisfy $\eta(s) = 1$ for $s\leq \frac{1}{2}$, $\eta(s) = 0$ for $s\geq 1$,
 and $|\eta'|\leq 4.$ Set 
\begin{align}\label{etaR}\eta_R(y) := \eta\left(\frac{|y|}{R}\right).
\end{align} Let $u$ be a $C^2$ function on $\mathbb{R}^3$ satisfying 
$|\Delta u| = O(r^{-4})$. Then we have  
\begin{multline}\label{presub}\int_{B_{R}(x)}\frac{\eta_R(x-y)\Delta u(y)}{4\pi |x-y|}dv_y = u(x)\\
 + 
\int_{B_{R}(x)}(\frac{\Delta\eta_R(x-y)}{4\pi |x-y|}+  \frac{\nabla\eta_R(x-y)\cdot\nabla|x-y|}{2\pi |x-y|^2}) u(y)dv_y.
\end{multline}
Hence for some $C_1,C_2>0$, 
\begin{align}\label{subharmonic}|u(x)|\leq C_1R^{-2} + 
C_2R^{-3/2}\sqrt{\int_{B_{R}(x)}|u|^2dv}.
\end{align}
Differentiating \eqref{presub} yields 
\begin{align}\label{dsubharmonic}|\nabla u(x)|\leq \tilde C_1R^{-3} + 
\tilde C_2R^{-5/2}\sqrt{\int_{B_{R}(x)}|u|^2dv}.
\end{align}
In order to apply these results to $\mu_a$, we first decompose $\mu_a$ in spherical harmonics:
\begin{align}\label{SperDec}
\mu_a = \sum\mu_a^k(r)Y_k\left(\frac{x}{r}\right),
\end{align}
 where $Y_k$ is a spherical harmonic corresponding to a harmonic homogeneous polynomial  of degree $k$ on $\mathbb{R}^3$, and $Y_0 =1$. 
Then we still have $|\Delta\mu_a^k | = O(r^{-4})$ for all $k$. For $\mu_a^0$ this gives us 
\begin{align}\label{orsq}
\p_r (r^2\p_r\mu_a^0) = O(r^{-2}).
\end{align}
Integrating \eqref{orsq}, we see $\lim_{r\to\infty}r^2\p_r\mu_a^0=:-\frac{\vartheta_a}{2}$ exists and is finite. Hence 
$\lim_{r\to\infty}r^2\p_r(\mu_a^0-\frac{\vartheta_a}{2r})=0$. Moreover 
\begin{align}\label{ode}\p_rr^2\p_r(\mu_a^0-\frac{\vartheta_a}{2r}) = O(r^{-2}),
\end{align}
since $-\frac{\vartheta_a}{2r}$ is harmonic. Integrating \eqref{ode} from $\infty$ to $r$ now yields
\begin{align} \p_r\left(\mu_a^0-\frac{\vartheta_a}{2r}\right) = O(r^{-3}).
\end{align}
Let $\frac{\lambda_a}{\ell}:=\limsup_{r\to\infty}\mu_a^0(r)$. Then we may integrate 
the equality\\ $\p_r(\mu_a^0-\frac{\lambda_a}{\ell}-\frac{\vartheta_a}{2r}) = O(r^{-3})$ from $\infty$ to $r$ to deduce 
\begin{align}\mu_a^0 = \frac{\lambda_a}{\ell} +\frac{\vartheta_a}{2r} + O(r^{-2}). 
\end{align}

Now we consider $\mu_a^1$. It satisfies 
\begin{align} \label{2harm} \p_r r^{-2}\p_r (r^2\mu_a^1)    = O(r^{-4}).
\end{align}
Integrating  \eqref{2harm} from $\infty$ to $r$ yields 
\begin{align} \label{2harm2}   \p_r (r^2\mu_a^1)    = O(r^{-1}).
\end{align}
Hence integrating from any fixed $O(1)$ $r_0$ to $r$ yields 
$$|\mu_a^1|=O(\frac{\ln(r)}{r^2}).$$

Set $\tilde \mu_a = \mu_a-\mu_a^0-\mu_a^1Y_1$. 
Taking the $L^2$ inner product of the equation $\Delta\tilde \mu_a  = O(r^{-4})$ with $\phi^2\tilde \mu_a$, for some compactly supported radial function $\phi$  gives 
\begin{multline}\label{aughardy1}
\|\nabla (\phi\tilde \mu_a)\|^2 - \||d\phi|\tilde \mu_a\|^2 \leq C\int\frac{\phi^2|\tilde \mu_a|}{(r^2+1)^2}dv\\
\leq C \epsilon\int\frac{\phi^2|\tilde \mu_a|^2}{(r^2+1) }dv+C\epsilon^{-1}\int\frac{\phi^2 }{(r^2+1)^3}dv. 
\end{multline}
For functions with vanishing $Y_0$ and $Y_1$ coefficients in their spherical harmonic expansion, Hardy's inequality strengthens in dimension 3 to 
\begin{align}\label{aughardy2}\|\nabla (\phi\tilde \mu_a )\|^2\geq \frac{25}{4} \| \frac{\phi \tilde \mu_a}{r}\|^2. 
\end{align}
Choose $\phi= \phi_{n } = \frac{r^{\frac{3}{2}}}{\ln(r+1)}$ for $ r\leq n$, $\phi_{n} = \frac{n^{3}r^{-\frac{3}{2}}}{\ln(n+1)}$ for $n\leq r\leq n^4$,
and $\phi_{n} = \frac{n^{-3}\eta_{n^4}}{\ln(n+1)}$ for $r\geq n^4$. ($\eta_R$ is defined in \eqref{etaR}.)
 Observe that 
$$\int_{n^4\leq |x|} |d\phi_{n}|^2\tilde \mu_a^2dv = O(n^{-2}\ln(n+1)^{-2}). $$
Hence  \eqref{aughardy1} and \eqref{aughardy2} imply 
\begin{align}\label{aughardy3}
\left(\frac{25}{4}-\frac{9}{4}-C\epsilon\right)\left\|\frac{\phi_n\tilde \mu_a}{r}\right\|^2 - C\int\frac{\phi_n^2}{(r^2+1)^3}dv= o(n^{-2}). 
\end{align}
Since $\int\frac{r^3dv}{(r^2+1)^3\ln^2(r+1)}<\infty$, we may take the limit of \eqref{aughardy3} as $n\to \infty$ to deduce 
\begin{align}\label{cyborg}\left\|\frac{r^{\frac{1}{2}}\tilde \mu_a}{\ln(r+1)}\right\|^2 <\infty. 
\end{align}
Inserting \eqref{cyborg} into \eqref{subharmonic} we deduce 
\begin{align}\label{lnest}\mu_a = \frac{\lambda_a}{\ell} + \frac{\vartheta_a}{2r}+O(\frac{\ln(r)}{r^2}).
\end{align}
From \eqref{dsubharmonic} and \eqref{cyborg}  we see   that 
\begin{align}\label{nablnest}d\mu_a = -\frac{\vartheta_adr}{2r^2}+O(\frac{\ln(r)}{r^3}).
\end{align}
 
\begin{theo}\label{asyholo}  Let $A$ be a  finite action  self-dual or anti-self-dual connection  on a Hermitian bundle 
$\mathcal{E}$ over TN$_k$. Assume that $A$ has generic asymptotic holonomy.  Then there are real constants 
$\vartheta_a$ and 
$\lambda_a$, with $\frac{\lambda_a-\lambda_b}{\ell}\not\in\mathbb{Z}$ for $a\neq b,$ such that 
\begin{equation}\mu_a =\frac{\lambda_a}{\ell} + \frac{\vartheta_a}{2r} + O\left(\frac{1}{r^2}\right),
\label{sdsharpholeq}
\end{equation}
where  $O(\frac{1}{r^2})$ is used in the $C^1$ sense: the derivative is $O(\frac{1}{r^3})$.
There exists a compact set $K\subset\mathbb{R}^3$, such that outside of $\pi_k^{-1}(K)$,  $\mathcal{E}$ splits as a direct sum of line bundles, 
$\mathcal{E}= \oplus_a\pi_k^*W(a)$, where each $W(a)$ is a line bundle over $\IR^3\setminus K$. With respect to this splitting,   $A$ has the form  
\begin{align}\label{asyconnect}
A=\,\mathop{\oplus}_a\left(-i(\lambda_a+\frac{m_a}{2r})\frac{d\tau+\omega}{V}+\pi_k^* \eta_a\right)+O(r^{-2}),
\end{align}
with $\eta_a$ a  connection on  $W(a)$, and $m_a:=l\vartheta_a+\frac{\lambda_a}{\ell} k$. Moreover, when $A$ is anti-self-dual, $m_a\in \IZ$. 
With respect to the splitting,  
\begin{equation}F_A = i\mathrm{diag} \left( \frac{\vartheta_adr\wedge (d\tau +\omega)}{2r^2} +  \epsilon\frac{V\vartheta_a }{2}dVol_{S^2}\right)+ O\left(\frac{1}{r^3}\right),
\label{faasymp}
\end{equation}
with $\epsilon = -1$ in the ASD case and $+1$ in the $SD$ case.   

\end{theo}
\begin{proof}
Let $A$ denote the connection matrix in the $\{w_a\}$ frame. Let $\vartheta$ denote the matrix with diagonal entries $\vartheta_a$. Then by \eqref{monopole} and \eqref{nablnest}
\begin{align}\label{goodf}F(\p_r,\p_\tau) = \frac{i\vartheta }{2r^2}+O(\frac{\ln(r)}{r^3}). \end{align}
 Hence \eqref{deltaestasd} yields 
\begin{align}\label{deltaestasd2}
& \Delta_{\IR^3}\mu_a(x) =    i   \frac{V_1}{V} \frac{\vartheta_a }{r^2} +O(\frac{\ln(r)}{r^5}). 
\end{align}
In particular,  $ \Delta_{\IR^3}\mu_a(x)$ is $O(\frac{\ln(r)}{r^5})+\text{ radial}$. Hence
we may sharpen \eqref{2harm2} to 
\begin{align} \label{2harm3}   \p_r (r^2\mu_a^1)    = O(\frac{\ln(r)}{r^2}),
\end{align}
and $\mu_a^1    = O(\frac{1}{r^2}).$
Hence 
\begin{align} \label{2harm4}    
\mu_a =\frac{\lambda_a}{\ell} + \frac{\vartheta_a}{2r}+ O(\frac{1}{r^2}).
\end{align}
In \eqref{dsubharmonic},  let  $u = \mu_a -\frac{\lambda_a}{\ell}-\frac{\vartheta_a}{2r} =  O(\frac{1}{r^2}).$ Then $\int_{B_{2R}}|u|^2dv =O(R^{-1})$, and $|\nabla u|\leq \tilde C R^{-3}$. Hence 
\begin{align} \label{d2harm4}    d\mu_a = -\frac{\vartheta_adr}{2r^2}+ O(\frac{1}{r^3}).
\end{align}
Equation \eqref{sdsharpholeq} (in the $C^1$ sense) follows from Equations \eqref{2harm4} and \eqref{d2harm4}. Equation \eqref{faasymp} follows from \eqref{monopole}. 

Consider now the connection matrix $A(\frac{\p}{\p x^j})=\left[A(\frac{\p}{\p x^j})_a^b\right]$. We compute 
\begin{align}&\frac{\p}{\p\tau}A(\frac{\p}{\p x^j})_a^b = \frac{\p}{\p\tau}\langle \nabla_{\frac{\p}{\p x^j}}w_a,w_b\rangle\nonumber\\
&
= -i(\mu_a-\mu_b)A(\frac{\p}{\p x^j})_a^b +\langle F(\frac{\p}{\p\tau},\frac{\p}{\p x^j})w_a,w_b\rangle - i\mu_{a,j}\delta_{ab}.\end{align}
Hence 
\begin{align}\label{odeab}\frac{\p}{\p\tau}(e^{i\tau(\mu_a-\mu_b)}A(\frac{\p}{\p x^j})_a^b) =
  e^{i\tau(\mu_a-\mu_b)}\langle F(\frac{\p}{\p\tau},\frac{\p}{\p x^j})w_a,w_b\rangle  - i\mu_{a,j}\delta_{ab}.\end{align}
When $a\not = b$, $\langle F(\frac{\p}{\p\tau},\frac{\p}{\p x^j})w_a,w_b\rangle = O(r^{-N})$ for all $N\in\mathbb{N}$, by \eqref{asrequested}. Hence, integrating \eqref{odeab} from $0$ to $2\pi$, the periodicity of $A(\frac{\p}{\p x^j})_a^b$ implies 
\begin{align}\label{offdiag}
A(\frac{\p}{\p x^j})_a^b = O(r^{-N}), \text{ for }a\not = b.
\end{align}
When $a=b$ we have 
\begin{align}\label{odeabc}\frac{\p}{\p\tau} A(\frac{\p}{\p x^j})_a^a  =
   \langle F(\frac{\p}{\p\tau},\frac{\p}{\p x^j})w_a,w_a\rangle -i\mu_{a,j}= O(r^{-4}).\end{align}
Here we have used \eqref{monopole} to obtain the $O(r^{-4})$ estimate in \eqref{odeabc}. 

The quadratic curvature decay, and Corollary \ref{simplestav} guarantees that there exists a compact set $K\subset\mathbb{R}^3$ such that the holonomy eigenvalues are distinct in $K^c$ and such that we can choose  a logarithm for the holonomy eigenvalues continuously in $K^c$. On $K^c$,  $\mathcal{E}$ splits  into an orthogonal sum of the holonomy eigenline bundles $l_a$:
\begin{align}
\mathcal{E}|_{\mathrm{TN}_k\setminus K}=l_1\oplus l_2\oplus\ldots\oplus l_n.
\end{align}
The $l_a$ can be obtained as the pullback of bundles from $\IR^3\setminus \pi_k(K)$,  as we now show. 
Define $  W(a)$ to be the line bundle on $\IR^3\setminus K$, whose fiber at  $x$ is 
\begin{align}\label{Eq:LinBs}  
W(a)_x := \text{Ker}(\nabla_{\p_\tau}+i\mu_a(x))\subset C^1(\pi_k^{-1}(x),\mathcal{E}).
\end{align} 
This definition depends on the choice of the branch of the log of the holonomy eigenvalues $\ln(e^{2\pi i\mu_a})$ on $K^c$.  
    $ W(a)$ inherits a connection from $\mathcal{E}$ as follows. A choice of  local holonomy eigenframe $\{w_b\}_b$ defines  local sections $\tilde w_a$ of each $  W(a)$ by $\tilde w_a(x)= w_a(x,\cdot)$. Define 
 \begin{align} \label{connWdef}\nabla^{  W(a)}_{X}\tilde w_a(x) =   \frac{ \langle \nabla_{X^h}w_a, w_a\rangle_{L^2(\pi_k^{-1}(x))} }{\|w_a\|^2_{L^2(\pi_k^{-1}(x))}}\tilde w_a(x) ,
\end{align}
where $X^h$ denotes the horizontal lift of $X$,  with
$X^h = X -\omega(X)\frac{\p}{\p\tau} $ in a local trivialization of the circle bundle. Write 
\begin{align} \nabla^{  W(a)}_{X}\tilde w_a =     (A^0(X)_a^a+i\mu_a\omega(X))\tilde w_a ,
\end{align}
where $A^0(X)_a^a(x) $ denotes the average value of $A(X)_a^a $ on $\pi_k^{-1}(x)$. Let $\eta_a$ denote the connection defined by $\nabla^{  W(a)}$ on $W(a)$. Let $\nabla^{\pi_k^*\eta_a}$ denote the covariant derivative defined by $\pi_k^*\eta_a$. By definition, 
$$ \nabla^{\pi_k^*\eta_a}_{\frac{\p}{\p x^j}}w_a = (A^0(\frac{\p}{\p x^j})_a^a+i\mu_a\omega(\frac{\p}{\p x^j}))w_a,$$
and 
$$\nabla_{\frac{\p}{\p \tau}}^{\pi^*\eta_a}w_a = 0.$$
Hence 
\begin{align}\label{deltaconn}(\nabla- \nabla^{\pi_k^*\eta_a})_a^a &= (A(\frac{\p}{\p x^j})_a^a-A^0(\frac{\p}{\p x^j})_a^a-i\mu_a\omega(\frac{\p}{\p x^j})dx^j-i\mu_ad\tau\nonumber\\
&=  -i\mu_a(d\tau+\omega)+O(r^{-4}),\end{align}
where we have used  \eqref{odeabc} to obtain the second equality. Expand 
$$\mu_aV =\mu_a(l+\frac{k}{2r})+O(r^{-2}) = \lambda_a+\frac{l\theta_a+\frac{\lambda_ak}{l}}{2r}+O(r^{-2})$$
to deduce \eqref{asyconnect}. 
By \eqref{deltaconn}, we have 
\begin{align}\label{dumrel}\pi_k^*F_{\eta_a}= (F_A)_a^a +id\mu_a\wedge (d\tau+\omega)+i\mu_ad\omega+O(r^{-5}). 
\end{align}
Restrict now to the case where $A$ is ASD. Choosing $\{X,Y\}$ to be an oriented orthonormal basis of $T_pS^2$ (in the radius one metric), \eqref{dumrel}  and \eqref{tndef} imply 
\begin{align}\pi_k^*F_{\eta_a}(X^h,Y^h)&= (F_A)_a^a(X^h,Y^h)+i\mu_ad\omega(X,Y)\nonumber+O(r^{-3})\\
&= -r^2VF_A(\frac{\p}{\p r}, \frac{\p}{\p \tau})_a^a -\frac{ik\mu_a}{2 }+O(r^{-3})\nonumber\\
&=  -i  \frac{l\theta_a+\frac{\lambda_ak}{l}}{2}+O(r^{-1}).
\end{align}

Integrating this equality over $S^2$, we see that 
$$  l\vartheta_a+k\frac{\lambda_a}{\ell}= \frac{i}{2\pi}\int_{S^2} F_{\eta_a}=\int_{S^2}c_1(W(a)).$$
Therefore $m_a = l\vartheta_a+k\frac{\lambda_a}{\ell}\in\IZ$ as claimed.  
\end{proof}

As mentioned in the beginning of this section, the identification of $l_a$ as pullbacks $l_a=\pi_k^*W(a)$ of a line bundle $W(a)$ over $\mathbb{R}^3\setminus K$ depends on the choice of the branch of the log of the holonomy eigenvalue.  This is clear from their definition in Eq.~\eqref{Eq:LinBs}.  Making a different choice changes $\lambda_a/\ell$ by an integer and changes $m_a$ by that integer multiple of $k.$    This choice is indeed significant for the bow construction and will be discussed at length in the third paper in this series.  Let us mention here two natural choices:
\begin{enumerate}
\item
One can choose to have all $\lambda_a\in[0,\ell).$ In this case the monopole charges $m_a$ take any integer values. Renumbering the line bundles, one can choose $0\leq\lambda_1<\lambda_2<\ldots<\lambda_n<\ell.$
\item
One can instead choose to have $0\leq m_a<k$.  In this case, $\lambda_a$ are any real numbers.  Renumbering, one can choose $0\leq m_1<m_2<\ldots<m_n<k.$
\end{enumerate}
In fact, since the end of the $k$-centered Taub-NUT, TN$_k\setminus\pi_k^{-1} K,$ is contractible to the lense space $S^3/(\mathbb{Z}/k\mathbb{Z})$, there are $k$ types of line bundles over it.  It is $m_a \mod k$ that distinguishes the topological type of the line bundle $l_a.$


\section{Asymptotic Decay of Harmonic Spinors}\label{Sec:HarmSpinors}
In order to recover the bow data from an instanton on TN$_k$, it is necessary to understand properties of the $L^2$-kernel of the coupled Dirac operator. Of particular importance is the dimension of its kernel and the decay rates of $L^2$ harmonic spinors. 
\subsection{The Dirac Operator}\label{subSec:Dirac}
Let $A$ be an instanton connection with generic asymptotic holonomy on TN$_k$. According to Theorem \ref{asyholo} (equation \eqref{asyconnect}), outside a compact set we have a splitting with respect to which  
\begin{align}\label{refA}
A = \mathop{\oplus}\limits_{a=1}^n \left(-i\left(\lambda_a+\frac{m_a}{2 r}\right)\frac{d\tau+\omega}{V} + \pi_k^*\eta_a \right)+O\left(\frac{1}{r^2}\right),\end{align} 
where the values $\lambda_a/l$ depend on a choice of gauge in a neighborhood of $\infty$ and are pairwise  distinct mod $\IZ$.  By a choice of gauge at infinity, they can be chosen to lie in the interval $[0,1).$ 
We recall that the spinor bundle $S$  of TN$_k$ splits into eigen-sub-bundles of  Clifford multiplication by the volume element: $S =S^-\oplus S^+$. We set $\gamma^5:=-c(Vdx^1\wedge dx^2\wedge dx^3\wedge d\tau),$ with $\gamma^5|_{S^\pm}=\pm1,$ where $c$ denotes the Clifford action. (Note: In our conventions the defining relation of the Clifford algebra is $c(\alpha)c(\beta)+c(\beta)c(\alpha)+2 g(\alpha,\beta)=0$ for any one-forms $\alpha$ and $\beta.$). The connection $A$ induces a coupled Dirac operator  $D=D_A$ acting on $\Gamma(S\otimes \mathcal{E})$ with the chiral split:
\begin{equation}\label{Dirac1}
D=\begin{pmatrix}
0&D^+\\D^-&0\end{pmatrix},
\end{equation}
where 
$D^-:\Gamma(S^-\otimes \mathcal{E})\to \Gamma(S^+\otimes \mathcal{E})$ and
$D^+:\Gamma(S^+\otimes \mathcal{E})\to \Gamma(S^-\otimes \mathcal{E}).$
\subsection{Harmonic Spinors: the Fredholm Case}
In Section \ref{Sec:Index} we compute the $L^2$-index of $D_A$. However, this operator is not always Fredholm. The following lemma characterizes which instanton connections produce Fredholm Dirac operators. 

\begin{lemma} \label{Fred}
Let $X$ be a spin manifold equipped with an exhaustion by nested compact sets $\{X_j\}_{j=1}^\infty$.  Suppose that $X_1^c$ is a Riemannian circle bundle, $\pi:X_1^c\to Y$. Set $T_j:=\inf_{ X_j^c}\frac{1}{| \p_\tau|^2}$, with the circle fiber locally parameterized by $\tau\in[0, 2\pi)$. Assume $T_\infty:=\liminf_{j\to\infty}T_j >0.$
Let $A$ be a connection on a bundle $\mathcal{E}$ over $X$. Assume $\exists \kappa>0$ so that the eigenvalues $\{e^{2\pi i\mu_a}\}_a$ of the holonomy $\tilde H_p$ of $S\otimes\mathcal{E} $ satisfy $|\mu_a-m|>\kappa$, for all $m\in \IZ$, $\forall p\in X_1^c$. Let $\Sigma$ denote the scalar curvature and $\Sigma^-:= \min\{0,\Sigma\}$. Assume further that $\|F_A\|_{L^\infty(X_j^c)}+\|\Sigma^-\|_{L^\infty(X_j^c)}\leq \epsilon_j$, with $\lim_{j\to\infty}\frac{\epsilon_j}{T_j} = 0$. Then 
$D_A:H_1^2(S\otimes \mathcal{E})\to L^2(S\otimes \mathcal{E}) $ is Fredholm. In particular, if $A$ is an instanton connection on TN$_k$ with generic asymptotic holonomy and $\frac{\lambda_a}{\ell}\not\in \IZ$, $\forall a$, then $D_A$ is Fredholm.  
\end{lemma}
\begin{proof}
It is well known  (see, e.g., \cite{anghel}) 
that $D$ is Fredholm if and only if there is a compact set $K\subset X$  and a constant $C_K$, such that $\| Dh\|^2_{L^2}\geq C_K\| h\|^2_{L^2}$ for all  $h\in C_c^\infty(S\otimes \mathcal{E})$ with compact support  $\mathrm{supp}(h)\subset  K^c$. For such $h$, the Lichnerowicz formula gives     
 $$ \| Dh \|^2_{L^2}= \| \nabla h \|^2_{L^2}+\langle h,( \frac{\Sigma}{4}+c(F_A))h\rangle_{L^2},$$
where $c(F_A)$ denotes Clifford multiplication by the curvature form $F_A$.
By hypothesis, if $K = X_j$, then   by Lemmas \ref{poincholonomy} and \ref{verygeneric},
\begin{align*}
\| Dh\|^2_{L^2}&\geq \| \nabla h\|^2_{L^2}-C\epsilon_j\|h\|^2_{L^2} \geq T_j\| \nabla_{\p_\tau} h\|^2_{L^2}-C\epsilon_j\|h\|^2_{L^2}\\
&\geq T_j\left(\kappa^2 -C\frac{\epsilon_j}{T_j}\right)\|  h\|^2_{L^2}.
\end{align*}
 Choosing $j $ sufficiently large yields the desired result.  From Lemma \ref{verygeneric} and Theorem \ref{lucky} it is immediate that an instanton on TN$_k$ with $\frac{\lambda_a}{\ell}\not\in \IZ$, $\forall a$ satisfies the conditions on $\kappa,$ $\epsilon_a$, and $T_a$. 
\end{proof} 

Estimates implying Fredholmness usually imply exponential decay of $L^2$ zero modes.  
\begin{proposition} \label{spindecay}
Let $A$ be an instanton with generic asymptotic holonomy and splitting \eqref{refA}, with $\frac{\lambda_a}{\ell}\not\in \IZ$, $\forall a$, and choose a positive $\alpha<\inf\left\{|\frac{\lambda_a}{\ell}+n|: n\in\mathbb{Z}\right\}$. Let $h\in \mathrm{Ker}(D_A)$ with $e^{-\beta r}h\in L^2$ for some $\beta<\alpha$. Then  $e^{b r}h\in L^2$ for all $b<\alpha$, and $h$ decays pointwise exponentially. 
\end{proposition}
\begin{proof}
 Let $\eta_n = e^{(b+\beta)r_n}e^{-\beta r}$,  where $r_n = \min\{r,n\},$ and $b\geq\beta.$
Then, using $D^2=\nabla^*\nabla
+c(F_A)$ and the quadratic decay of $F_A$, we get  
$$ 0 = \| D(\eta_n h)\|^2 - \|c(d\eta_n) h\|^2  \geq  \| \nabla(\eta_n h)\|^2 - b^2\| \eta_n  h\|^2 - C\|\eta_n r^{-1} h\|^2. $$
Lemmas \ref{poincholonomy} and \ref{verygeneric} and the additional hypothesis on the $\lambda_a$ imply that, for some compact set $K_\alpha$,
$$\| \nabla(\eta_n h)\|^2\geq \| \alpha  \eta_n h\|^2 - \alpha^2\int_{K_\alpha}|\eta_n h|^2dv.  $$ 
Hence  for some $\tilde C>0$, 
$$ 0  \geq   \alpha^2 \| \eta_n h \|^2 - b^2\| \eta_n  h\|^2 - \tilde C \|\eta_n r^{-1} h\|^2. $$
Choosing $b<\alpha$, we have 
$$ \tilde C \|\eta_n r^{-1} h\|^2  \geq   (\alpha^2-b^2) \| \eta_n h \|^2. $$
Taking the limit as $n\to\infty$, we see that $e^{br}h\in L^2$. To pass to pointwise estimates, we note 
$$\frac{1}{2}\Delta|h|^2 = -|\nabla h|^2 +\langle c(F_A)h,h\rangle\leq c|F_A|^2|h|^2.$$
Hence, applying Proposition~\ref{moserflex} to $f = |h|^2$ with $R=\frac{\delta(M)}{8}$, we  
 deduce $e^{br}h$ is pointwise bounded. 
\end{proof}
\subsection{Harmonic Spinors: the Non-Fredholm Case}\label{Sec:Spinors} 
In this section, $A$ is an instanton connection with generic asymptotic holonomy and therefore admitting the decomposition \eqref{refA}. The results of this section will not be used in the rest of this paper, but they will play a role in the third paper in this series.  
There, the reconstruction of the bow representation from an instanton requires understanding the decay rates of $L^2$-zero modes of $D_A,$ whether it is Fredholm or not. 
\begin{proposition}\label{needscoro} Suppose that $\lambda_1 = 0$ and $h$ is an $L^2$ harmonic spinor. Then 
$\|r^{m}\nabla^m h\|_{L^2}<\infty,$ $\forall m$. 
\end{proposition}
\begin{proof} Assume that 
$\|r^{j}\nabla^j h\|_{L^2}<\infty,$ $\forall j<m$. Replacing $F_A$ and its Bochner formula by $h$ with its Lichnerowicz formula in \eqref{ksub1} and using the decay estimates for the curvature of Proposition \ref{killfourier}, yields the inequality for some $ C_m >0$, 
\begin{align}\label{ksub3}\frac{1}{2}\Delta| \nabla^{m}h|^2&\leq -|\nabla^{m+1}h|^2+C_m\sum_{i=0}^m (1+r)^{-2-i}|\nabla^{m-i}h||\nabla^{m}h|. 
\end{align}
Let $\eta_n \in C_c^2(B_{2n}(0))$ be a bounded sequence of cutoff functions satisfying $\eta_n(x) = 1$ for $x\in B_n(0)$,   and $\|r d\eta_n\|_{L^{\infty}}+\|r^2 \Delta\eta_n\|_{L^{\infty}}<\alpha$, for some $\alpha>0$ independent of $n$. Multiplying \eqref{ksub3} by $\eta_nr^{2m+2 }$ and integrating by parts yields 
\begin{align}\label{ksub4}&\int\eta_n|r^{m+1 }\nabla^{m+1}h|^2dv \leq  \tilde C_m\int\sum_{i=0}^m |r^{m-i}\nabla^{m-i}h||r^{m}\nabla^{m}h|dv. 
\end{align}
Take the limit as $n\to\infty$ to deduce $|r^{m+1}\nabla^{m+1}h|\in L^2$. By induction, this is true for all $m$, since it is true for $m=0$ and for $\epsilon >0$ arbitrary. 
\end{proof}
Assume for the remainder of this section that $0 = \lambda_1<|\lambda_{j}| , \forall j>1$. Let $C$ be a circle in $\IC$ with center $0$ and radius $\frac{1}{4}\min\{|\mu_a-m|:m\in\IZ\text{ and }a>1\}.$  Once again define a projection operator 
$\Pi_0:= \frac{1}{2\pi i}\oint_C\left(z-i\nabla_{{\p_\tau}}\right)^{-1} dz$, now acting on bundle valued spinors, and set $\Pi_1 := 1-\Pi_0$.  Observe that outside a compact set, the decomposition $\mathcal{E} = \oplus_a l_a$ into holonomy eigenline bundles induces a decomposition of $S\otimes \mathcal{E}$ as $\oplus_a(S\otimes l_a)$. Let $h = \sum_a h_a$ denote the corresponding decomposition of a spinor. Then the analog of equation \eqref{analog} is  
$$\Pi_0h = \Pi_0h_1.$$  
 As with  Proposition \ref {polydecay}, we immediately obtain the following corollary to Proposition \ref{needscoro}. 
\begin{coro}\label{hdecay} For all $N,j\in\IN$, 
$\| r^N\nabla^j_{\p_\tau}\Pi_1h\|_{L^\infty}<\infty$.
\end{coro} 

\begin{lemma}\label{degencurv}
If $\lambda_1 =m_1 =0$,  there exist $c_A>0$ and $c_{h,N}>0$   $\forall N$ such that   
$|\nabla_{\p_\tau}\Pi_0h| \leq c_Ar^{-2}|h|,$ $|c(F_A)\Pi_0h|\leq c_Ar^{-3}|h|,$ and $|c(F_A) h|\leq c_Ar^{-3}|h|+c_{h,N}r^{-N}.$
\end{lemma} 
\begin{proof}By  equation \eqref{spec}, the eigenvalue of $i\nabla_{\frac{\p}{\p \tau}}$ on the image of $\Pi_0$ is $\mu_1$. By \eqref{sdsharpholeq}, 
$\mu_1 = O(r^{-2})$, proving the first inequality of the lemma.     By the generic asymptotic holonomy condition, only $\lambda_1$ vanishes; therefore the image of $\Pi_0$ is contained in the holonomy $\mu_1$ eigenspace;   $m_1 =\lambda_1 = 0$ implies 
$d\mu_1 = O(r^{-3})$, by \eqref{sdsharpholeq}.  The ASD condition implies all curvatures are determined by  $F(\p_\tau,\cdot)$. Equation \eqref{monopole} now implies that these curvature components are $O(r^{-3})$ on the image of $\Pi_0$, giving the second inequality. Writing $|c(F_A) h|\leq |c(F_A) \Pi_0h|+|c(F_A) \Pi_1 h|$, the third inequality now follows from the second and Corollary \ref{hdecay}. 
 
\end{proof}
\begin{theo}\label{NFharmonic}
Let $A$ be an instanton on TN$_k$ with generic asymptotic holonomy and such that  $\lambda_1=0$ and $m_1=0$. If  $h\in Ker (D_A)\cap L^2$, then $\|r^2h\|_{L^\infty}<\infty$. 
\end{theo}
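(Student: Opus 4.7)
The plan is to convert the Dirac zero-mode equation into a non-negative scalar subsolution inequality via the Lichnerowicz formula and Kato's inequality, and then apply local Moser iteration, exploiting the quadratic curvature decay already established in Theorem~\ref{lucky}. Since the introduction observed $\mathrm{Ker}(D_A^-)\cap L^2=\{0\}$, we may assume $h\in\mathrm{Ker}(D_A^+)\cap L^2(S^+\otimes E)$. Because $\mathrm{TN}_k$ is scalar-flat, the Lichnerowicz formula on $S^+\otimes E$ reduces to $0=D_A^-D_A^+h=\nabla^*\nabla h+c(F_A)h$, exactly as used in Lemma~\ref{Fred}. Pairing with $h$ and using the pointwise identity $\tfrac12\Delta|h|^2=|\nabla h|^2-\langle\nabla^*\nabla h,h\rangle$ gives
\[
\tfrac12\Delta|h|^2\;\geq\;|\nabla h|^2-|c(F_A)|\,|h|^2\;\geq\;-Cr^{-2}|h|^2,
\]
where the last inequality invokes $|c(F_A)|\leq C|F_A|\leq Cr^{-2}$ from Theorem~\ref{lucky}. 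Thus $u:=|h|^2\geq 0$ is a classical subsolution of the scalar elliptic inequality $\Delta u+Vu\geq 0$ on $\mathrm{TN}_k$ with potential satisfying $V\leq Cr^{-2}$.

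I would then invoke the standard $L^1\to L^\infty$ Moser mean-value inequality for non-negative subsolutions on geodesic balls. For a point $x$ with $r:=r(x)$ large, choose radius $R=r/2$; then $r(y)\geq r/2=R$ for every $y\in B_R(x)$, so $\|V\|_{L^\infty(B_R(x))}R^2\leq C$ uniformly in $x$. Combined with the bounded geometry of $\mathrm{TN}_k$, which supplies uniform Sobolev and Poincar\'e inequalities on such balls, this scale-invariant smallness of $V$ produces a Moser constant $C_{\mathrm{Moser}}$ independent of $x$ for $r$ large, and
\[
\sup_{B_{R/2}(x)}u\;\leq\;C_{\mathrm{Moser}}\,R^{-4}\|u\|_{L^1(B_R(x))}\;=\;C_{\mathrm{Moser}}\,R^{-4}\|h\|_{L^2(B_R(x))}^2\;\leq\;C\,r^{-4}\|h\|_{L^2(\mathrm{TN}_k)}^2.
\]
This is the desired pointwise bound $|h(x)|\leq Cr(x)^{-2}$ for $r(x)$ large; on any fixed compact set the bound is immediate from smoothness of $h$.

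The hard part is checking that the Moser constant is truly uniform as $x$ escapes to infinity, and this reduces to two ingredients already used pervasively in the paper: the bounded geometry of $\mathrm{TN}_k$, which gives uniform Sobolev constants on balls of radius comparable to the injectivity radius; and the sharp quadratic curvature bound of Theorem~\ref{lucky}, which is precisely what guarantees the scale-invariant smallness $\|V\|_{L^\infty(B_R(x))}R^2\leq C$. I should note that the hypothesis $\lambda_1=0$, $m_1=0$ plays no direct role in the Moser argument itself: the quadratic bound $|r^2h|\leq C$ would hold for any $L^2$ zero mode under the generic asymptotic holonomy hypothesis, and in the Fredholm case Proposition~\ref{spindecay} supplies the much stronger exponential decay. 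The role of the hypothesis $\lambda_1=0$, $m_1=0$ is to isolate the non-Fredholm regime where quadratic is the sharpest polynomial rate one can expect and no exponential improvement is available; a more refined, weighted-Bochner argument along the lines suggested in the introduction could presumably extract additional asymptotic information beyond mere boundedness, but is not needed for the stated conclusion.
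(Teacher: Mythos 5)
Your reduction to the scalar subsolution inequality $\tfrac12\Delta|h|^2\geq -Cr^{-2}|h|^2$ is fine, but the key step fails: the mean-value inequality you invoke does not hold with the factor $R^{-4}$ uniformly in the basepoint on an ALF space. The scale-invariant Moser estimate for a non-negative subsolution reads $\sup_{B_{R/2}(x)}u\leq C\,\mathrm{Vol}(B_R(x))^{-1}\|u\|_{L^1(B_R(x))}$, and on $\mathrm{TN}_k$ a ball of radius $R\sim r(x)/2$ centered far out is essentially a product of a Euclidean $3$-ball with a circle fiber of bounded length, so $\mathrm{Vol}(B_R(x))\sim R^3$, not $R^4$; equivalently, the Sobolev/Poincar\'e constants on such balls reflect effective dimension three (this is exactly the weight $r^{-1}$ in Minerbe's inequality \eqref{minerbetn}). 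The correct output of your argument is therefore $\sup_{B_{R/2}(x)}|h|^2\leq Cr^{-3}\|h\|_{L^2}^2$, i.e.\ $|h|\lesssim r^{-3/2}$ — which is precisely the preliminary bound the paper records at the start of its proof ("an analogous argument to Proposition~\ref{r3/2} proves that $|r^{3/2}h|$ is pointwise bounded"). Passing from $r^{-3/2}$ to $r^{-2}$ on an ALF space is the genuinely hard step, for the same reason quadratic curvature decay was hard in Theorem~\ref{lucky}: the borderline $r^{-2}$ potential cannot be absorbed by the weak ALF Sobolev inequality.

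This is also why your remark that the hypothesis $\lambda_1=0$, $m_1=0$ "plays no direct role" is a symptom of the gap rather than a harmless observation. The paper's proof splits $h=h_0+h_1$ using the projection $\Pi_0$ onto the lowest fiber Fourier mode; the nonzero modes $h_1$ are controlled by the spectral gap along the fiber, while for $h_0$ the hypothesis $\lambda_1=0$, $m_1=0$ together with the refined holonomy asymptotics \eqref{sharpholeq}--\eqref{dsharpholeq} forces $|\nabla_{\p_\tau}h_0|\leq Cr^{-2}|h|$ and lets the curvature term $\langle c(F)h_0,h_0\rangle$ be rewritten, via anti-self-duality and integration by parts along the fiber, with an effective potential of size $r^{-3}\ln r$ rather than $r^{-2}$. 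Only then do the radial Hardy inequality, the Bianchi-identity divergence trick (as in the proof of Theorem~\ref{lucky}), and a final maximum-principle iteration yield $\|r^{p-1/2}h\|<\infty$ for all $p<1$ and ultimately $|r^2h|\in L^\infty$. Without some substitute for this mode decomposition and the improved control of the zero-mode potential, the Moser argument alone cannot reach the quadratic rate.
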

\begin{proof} We follow the proof of Theorem \ref{lucky} with a few changes.  We are now aided by the linearity of the equation $D^2h=0$ and the prior knowledge of the quadratic curvature decay. On the other hand, we need a substitute for the Kato-Yau inequality. For the final maximum principle argument, we introduce a new iterated maximum principle.  

Let  $\{\hat{e}_j\}_{j=1}^4$ be an orthonormal tangent frame  with $\hat{e}_1=\frac{1}{\sqrt{V}}\p_r$ and $\hat{e}_4=\sqrt{V}\p_\tau$ and coframe  $\{\hat{e}^j\}_{j=1}^4$. Let $c^j$ denote Clifford multiplication by $\hat{e}^j$.

From  Lemma \ref{degencurv} and Corollary \ref{hdecay}, 
\begin{align}\label{derivest}|\nabla_{\hat{e}_4}h|^2 \leq 2|\nabla_{\hat{e}_4}\Pi_0h|^2+2|\nabla_{\hat{e}_4}\Pi_1h|^2
\leq O(r^{-4}|h|^2)+  O(r^{- N}).
\end{align}
Equation \eqref{derivest} gives a refined  Kato inequality as follows. First rewrite the Dirac equation $Dh = 0$ as 
$$ \nabla_{\hat e_1}h = c^1c^2\nabla_{\hat e_2}h
+c^1c^3\nabla_{\hat e_3}h+c^1c^4\nabla_{\hat e_4}h.$$ 
Hence  
\begin{align}\label{katoh} |\nabla_{\hat e_1}h|^2 &\leq 2(|  \nabla_{\hat e_2}h|^2+ |\nabla_{\hat e_3}h|^2)+|\nabla_{\hat e_4}h|^2+4|\nabla_{\hat e_4}h|\sqrt{|  \nabla_{\hat e_2}h|^2+ |\nabla_{\hat e_3}h|^2}\nonumber\\
&\leq (2+r^{-1})(|  \nabla_{\hat e_2}h|^2+ |\nabla_{\hat e_3}h|^2)+4r|\nabla_{\hat e_4}h|^2\nonumber\\
&\leq (2+ r^{-1})(|  \nabla_{\hat e_2}h|^2+ |\nabla_{\hat e_3}h|^2)+O( r^{-3}|h|^2)+O(  r^{1- N}) .
\end{align}
We rewrite this as 
\begin{align}\label{katoh2}  |\nabla_{\hat e_1}h|^2 \leq \frac{(2+ r^{-1})}{3+r^{-1}} |  \nabla h|^2
+O( r^{-3}|h|^2)+O( r^{1- N}) .
\end{align} 
Take the $L^2$ inner product $0 = \langle D^2h,\eta^2h\rangle_{L^2}$, use the Lichnerowicz formula, integrate by parts,   apply  \eqref{katoh}  and  Lemma \ref{degencurv}  to obtain
\begin{align}\label{intelichn}
0 =&\|\nabla (\eta  h)\|^2_{L^2} -  \||d\eta | h\|^2_{L^2} + \langle c(F)\eta h,\eta h\rangle_{L^2}\nonumber\\
\geq& \|\nabla_{\hat e_1} (\eta  h)\|^2_{L^2} +\|\frac{\eta \nabla_{\hat e_1}  h}{\sqrt{2+r^{-1}}}\|^2_{L^2} -  \||d\eta | h\|^2_{L^2}\nonumber \\
&-\int O(r^{-3}\eta ^2|h|^2)dv-\int O(\eta ^2r^{1- N})dv,
\end{align} 
valid for $\eta $ such that $\eta h\in H^2_1,$ including $\eta $ with $\|\frac{\eta  }{1+r}\|_{L^{\infty}}+\|d\eta \|_{L^{\infty}}<\infty.$  We choose $\eta = \eta_T = r_T^pr^{1/2},$ with $r_T = \min\{r,T\}$, and use  Kato's inequality and \eqref{hardy2}  to deduce : 
\begin{multline}\label{intelichnest}
\frac{1}{4}\|\frac{\eta_T h}{r\sqrt{V}}\|^2_{L^2} +\|\frac{\eta_T\nabla_{\hat e_1}  h}{\sqrt{2+r^{-1}}}\|^2_{L^2} \leq  (p^2t_{T,p,h}+pt_{T,p,h}+\frac{1}{4}) \|\frac{\eta_T}{r\sqrt{V}} h\|^2_{L^2} \\
+\int O( r^{-3}\eta_T^2|h|^2)dv+\int O(\eta_T^2r^{1- N})dv,
\end{multline} 
with $t_{T,p,h}:= \frac{\int|h|^2\chi_Tr_T^{2p }r^{-1}V^{-1}dv }{\int|h|^2 r_T^{2p }r^{-1}V^{-1}dv }.$
As in equation \eqref{intbyparts},  a divergence computation yields
\begin{align}\label{intbyparts2}\frac{r_T^{2p}V^{-\frac{1}{2}}}{\sqrt{2+r^{-1}}}\hat e_1|h|^2 
= & \mathrm{div} \left(\frac{r_T^{2p}|h|^2\hat{e}_1}{\sqrt{2+r^{-1}}}\right)
 -  \frac{2|h|^2 }{\sqrt{2 }}(p\chi_T+1)\frac{r_T^{2p }}{rV}   + O\left(|h|^2  r_T^{2p }r^{-2}\right).
\end{align}
Integrating this equation and applying Cauchy-Schwartz yields for some $C>0$, 
\begin{align}\label{intbyparts2} \|\frac{r_T^{ p}h}{V^{\frac{1}{2}}r^{\frac{1}{2}}}\|_{L^2}\|\frac{r_T^{ p}r^{1/2}}{\sqrt{2+r^{-1}}}\nabla_{\hat e_1}h\|_{L^2}
\geq   \int\frac{|h|^2}{\sqrt{2 }}(pt_{T,p,h}+1)\frac{r_T^{2p }}{rV}dv   - C\int |h|^2  r_T^{2p }r^{-2}dv.
\end{align}
Hence  
\begin{align}\label{intbyparts3}  \|\frac{r_T^{ p}r^{1/2}\nabla_{\hat e_1}h}{\sqrt{2+r^{-1}}}\|_{L^2}^2 
\geq   \frac{1 }{2 } (pt_{T,p,h}+1)^2 \| \frac{r_T^{p }}{\sqrt{rV}}h\|_{L^2}^2 - \sqrt{2 }C(pt_{T,p,h}+1) \int |h|^2  r_T^{2p }r^{-2}dv .
\end{align} 
Inserting this inequality into \eqref{intelichnest}  gives 
\begin{multline}\label{intelichnest2}
\left( \frac{1 -p^2}{2 }+ \frac{p^2 }{2 } (1-  t_{T,p,h} )^2  \right)\| \frac{r_T^{p }}{\sqrt{rV}}h\|_{L^2}^2 -  \sqrt{2 }C(pt_{T,p,h}+1) \int |h|^2  r_T^{2p }r^{-2}dv   \\
\leq \int O( r^{-3}\eta_T^2|h|^2)dv+\int O(r^{1- N})\eta_T^2dv.
\end{multline} 
Taking the limit as $T\to\infty$ for $p\leq 1$, we deduce   $(1-p)\|r^{p-\frac{1}{2}}h\|_{L^2}^2$ is uniformly bounded  and therefore $(1-p)\|\Phi(r^{p-\frac{1}{2}}h)\|_{L^1}$  is uniformly bounded. Consider $x\in \IR^3$. Write  $|x| = 2R$.   By \eqref{philap}, the Lichnerowicz formula,  and Lemma \ref{degencurv}, on $B_{R}(x)$, we have  
\begin{multline}\label{above}
\frac{1}{2V}\Delta (\Phi( h)+R^{-6}) = -\Phi(\nabla h) - Q(c(F_A) h,h)= O(r^{-3}|\Pi_0h|^2)+O(r^{-N})\\
\leq C_hR^{-3}(\Phi( h)+R^{-6}), 
\end{multline}  
for some $C_h>0$ independent of $x$.  Corollary \ref{moserapp} now gives $\forall p<1$, 
\begin{align}\label{phibound} \Phi(h(x))+R^{-6}&\leq \frac{C}{R^3}\int_{B_R(x)}\Phi(h)dv+CR^{-6}\nonumber\\
&\leq \frac{C_2}{R^{ 2+2p}}\int_{B_R(x)}r^{2p-1}\Phi(h)dv+CR^{-6}
\leq \frac{\tilde C_3R^{-2-2p}}{(1-p)}.
\end{align}
Applying Lemma \ref{rote} gives 
\begin{align}\label{pbound} 
|h(x)|^2\leq \frac{C_3R^{-2-2p}}{(1-p)}.
\end{align}
To sharpen this estimate, we next employ the maximum principle.

Fix $p<2$, and suppose that $\frac{1}{2}r^p\Phi( h) $ has a  critical point at  $x_p$. Then  
\begin{align}\label{critpt}Q(\nabla_{\hat e_1}h,h)(x_p) &= -\frac{p}{2r\sqrt{V}}\Phi(h)(x_p),\text{ and  therefore} \nonumber\\
\Phi(\nabla_{\hat e_1}h )(x_p) &\geq  \frac{p^2}{4r^2V}\Phi(h)(x_p).
\end{align}
Assume now that $x_p$ is also a local maximum. Applying \eqref{philap} to $h$ and then using \eqref{critpt}  yields at $x_p$
\begin{multline}
V^{-1}\Delta_{\IR^3}\frac{1}{2}r^p\Phi( h)(x_p)  = -r^p\Phi(\nabla h)(x_p)-r^pQ(c(F_A)h,h)(x_p)\\
-\frac{p(1+p)}{2V}r^{p-2}\Phi(h)(x_p)-\frac{2pr^{p-1}}{\sqrt{V}}Q(h,\nabla_{ \hat e_1} h)(x_p)\\
= -r^p\Phi(\nabla h)(x_p)-r^pQ(c(F_A)h,h)(x_p)-\frac{p(1-p)}{2V}r^{p-2}\Phi(h)(x_p) .
\end{multline}
We now use \eqref{katoh2}  and then \eqref{critpt} to estimate $V^{-1}\Delta_{\IR^3}\frac{1}{2}r^p\Phi( h)(x_p)$ as
\begin{align}0&\leq V^{-1}\Delta_{\IR^3}\frac{1}{2}r^p\Phi( h)(x_p)\nonumber\\
&  \leq   -\frac{(3+ r^{-1})}{2+r^{-1}}r^p\Phi(\nabla_{\hat e_1}h)(x_p) -(\frac{p(1-p)}{2V}r^{p-2}   +O(r^{p-3}))\Phi(h)(x_p) +O(r^{-N}) \nonumber\\
 &\leq   (- \frac{(4-p^2)r^{p-2}}{8V}  +O(r^{p-3}))\Phi(h)(x_p)+O(r^{-N}) ,
\end{align}
where the implied constants in $O(r^{-N})$ are independent of $p\leq 2$. Hence  at a maximum, choosing  $N=8-p$, we have for some $C>0$,
\begin{align}\label{cholla}
 r(2-p)  \Phi(h)  \leq C\Phi(h)+O(r^{-5}),
\end{align}
and either $r\leq \frac{2C}{ 2-p}$ or $\Phi(h)\leq O(r^{-5})$ at $x_p$. 

 Consider a sequence $p_k:= 2-e^{-k}$.  We seek to prove that $r^{2p_k}\Phi(h)$ is bounded with bound independent of $k$. Assume this is false.  Let the maximum value of $r^{2p_k}\Phi(h)$ occur at $x_{p_k}$.   Then $\lim_{k\to\infty}r^{2p_k}\Phi(h)(x_{p_k}) = \infty$. In particular, for $k>k_0$, for some $k_0\in \IN$, by \eqref{cholla}, $r(x_{p_k})\leq 2Ce^{k}$.     On the other hand,  
\begin{align}\frac{r^{2p_k}(x_{p_k})}{r^{2p_{k-1}}(x_{p_k})}=  r(x_{p_k})^{2e^{1-k}-2e^{-k}}< (2C)^{2e^{1-k}} e^{2ke^{1-k}}=:A_k.
\end{align}
\begin{align}\Phi(r^{p_k}h)(x_{p_k})\leq A_k\Phi(r^{p_{k-1}}h)(x_{p_k}) \leq A_k\Phi(r^{p_{k-1}}h)(x_{p_{k-1}}).\end{align} 
Iterating, we see that 
\begin{align} \Phi(r^{p_k}h) \leq  \Phi(r^{p_{k_0-1}}h)(x_{p_{k_0-1}})\prod_{k=k_0 }^\infty A_j.
\end{align}   
Take the limit as $k\to\infty$ to obtain
\begin{align}\|\Phi(r^2h)\|_{L^\infty}\leq  \Phi(r^{p_{k_0-1}}h)(x_{p_{k_0-1}})\prod_{k=k_0}^\infty A_k.
\end{align} 
 The product $\prod_{k=k_0}^\infty A_k<\infty$. Hence $\Phi(r^2h)$ is bounded and by Lemma \ref{rote}, $|r^2h|$ is bounded. 
\end{proof}


\section{The Index Theorem}\label{Sec:Index}
So far we have assumed only that $A$ is an instanton with generic asymptotic holonomy, so by Thm.~\ref{asyholo} the values $\exp(2\pi i \lambda_a/\ell)$ are pairwise distinct.  Thus, by \eqref{2harm4} this implies that the connection is $\kappa$-generic outside of some compact set in $\mathbb{R}^3$ and Lemma~\ref{poincholonomy} applies.  For the rest of this paper we also assume that 
\begin{assume}\label{assume}
$\exp\left(2\pi i\frac{\lambda_a}{\ell}\right)\not = 1$ for all $a.$
\end{assume}
It implies that the hypotheses of Lemma \ref{Fred} hold and the Dirac operator $D_A:H^2_1(TN_k,S\otimes \mathcal{E})\to L^2(TN_k,S\otimes \mathcal{E})$ is therefore  Fredholm. The objective now is to compute its $L^2$-index. The argument follows   \cite{SSZ}  and \cite{SternH}: 
 
\begin{enumerate}
\item In order to simplify the analysis, we apply a conformal transformation to the original TN$_{k}$ metric, and modify the connection so that it is asymptotically abelian. This does not change the $L^2$-index of the Dirac operator. The new metric is asymptotically that of a circle bundle with shrinking fiber over an $\mathbb{R}_+\times S^2$ base. Working with this new metric and connection greatly simplifies error estimates and allows us to use techniques developed in \cite{SternH} to compute the index. With this metric,   $\eta-$invariant type spectral terms are essentially replaced by their readily computed adiabatic limits.    
\item  We express the index as a sum of  terms involving the super-trace of the heat kernel $e^{-tD^2}$.  The index can be written as a sum of two terms:  {the }bulk and the asymptotic contribution. The bulk  involves  the classical Atiyah-Singer integrand, while the asymptotic contribution depends  on the behavior at infinity of the instanton connection and on the rescaled metric. 
\item We approximate the heat kernel by a parametrix, and show that the index can be computed from the parametrix. It follows,  as in the compact case, that this approximation can be used to compute the bulk contribution. It requires somewhat more work to show that the approximation can also be used to compute the asymptotic contribution. Our conformal change of the metric facilitates this step as well. Because the local injectivity radius tends to zero exponentially fast in the new metric, we first Fourier expand in the circle fibers. This step simultaneously avoids the introduction of exponentially large errors associated with cutoff functions localizing to geodesic neighborhoods and makes error terms associated with nonzero Fourier coefficients exponentially small. 
\item Finally, we compute the index from the parametrix. 
\end{enumerate}
\subsection{Index Preliminaries}\label{Sec:Prelim}
Let $r$ denote the Euclidean distance from the origin in the $\IR^3$ base of TN$_k$. Lift the function $r$ to TN$_k$. Multiplying the TN$_k$ metric $g$ (see \eqref{metric}) by a smooth conformal factor that equals $\frac{1}{Vr^2}$  for $r$ large, yields a new metric $g'$.  For large $r$, the new metric takes the form
\begin{equation}\label{ConfMet}  
g'=\frac{1}{r^2}dr^2+g_{S^2}+\frac{1}{V^2r^2}(d\tau+\omega)^2=dy^2+g_{S^2}+e^{-2y}V^{-2}(d\tau+\omega)^2,
 \end{equation}
where $y = \ln(r)$, $\omega$ is defined in \eqref{varpi}, and $g_{S^2}$ denotes the standard round metric on the unit $2-$sphere.

Observe that if $\Psi$ is a $p-$form (possibly with values in $ad(E)$), then
\begin{align}|\Psi|_{g'} = V^{\frac{p}{2}}e^{py}|\Psi|_{g} ,
\end{align} for $y$ large. Hence in the $g'$ metric, we have 
$$|F_A| = O(1).$$

\begin{proposition}\label{sameind}
The Dirac operators associated to $g=g_{\mathrm{TN}_k}$ and $g'$ have the same $L^2$-index.
\end{proposition}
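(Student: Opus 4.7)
The plan is to combine the four-dimensional conformal covariance of the coupled Dirac operator with the exponential decay established in Proposition~\ref{spindecay}. Let $\Phi : S\otimes E\to S'\otimes E$ denote the canonical fiberwise isometric identification of the spinor bundles induced by the conformal change $g'=Fg$. In dimension four, $\Phi$ preserves the chirality splitting and commutes with the $E$-factor of the covariant derivative, so the standard conformal covariance of the coupled Dirac operator reads
\begin{equation*}
D_{g'}\bigl(\Phi(F^{-3/4}\psi)\bigr)=F^{-5/4}\,\Phi(D_g\psi).
\end{equation*}
Thus $\psi\mapsto \Phi(F^{-3/4}\psi)$ is a chirality-preserving linear bijection between $\ker D_g$ and $\ker D_{g'}$.

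Next I match the $L^2$ conditions on each chirality. Using $|\Phi(\cdot)|_{g'}=|\cdot|_g$ and $dV_{g'}=F^2\,dV_g$, a direct computation yields
\begin{equation*}
\bigl\|\Phi(F^{-3/4}\psi)\bigr\|^2_{L^2(g')}=\int F^{1/2}\,|\psi|_g^2\,dV_g.
\end{equation*}
Because $F=1/(Vr^2)$ outside a compact set with $V$ bounded above and below by positive constants, $F^{1/2}$ is globally bounded. Hence the forward inclusion is immediate: $\psi\in\ker D_g\cap L^2(g)$ implies $\Phi(F^{-3/4}\psi)\in\ker D_{g'}\cap L^2(g')$.

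For the reverse inclusion, suppose $\Phi(F^{-3/4}\psi)\in L^2(g')$, i.e.\ $\int F^{1/2}|\psi|_g^2\,dV_g<\infty$. Since $F^{1/2}\geq c\,r^{-1}$ outside a compact set, this gives $\int r^{-1}|\psi|_g^2\,dV_g<\infty$, and from the trivial bound $e^{-2\beta r}\leq C(\beta)\,r^{-1}$ on $\{r\geq 1\}$ one concludes $e^{-\beta r}\psi\in L^2(g)$ for every $\beta>0$. Assumption~\ref{assume} ensures that $\alpha:=\inf_{j,n\in\IZ}|n+\lambda_j/l|$ is strictly positive, so for any $\beta<\alpha$ Proposition~\ref{spindecay} upgrades this to exponential decay of $\psi$; in particular $\psi\in L^2(g)$.

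The conformal bijection therefore restricts to isomorphisms $\ker D_g^\pm\cap L^2(g)\cong \ker D_{g'}^\pm\cap L^2(g')$, which yields the equality of $L^2$-indices. The main subtlety is this reverse inclusion, which crucially invokes Assumption~\ref{assume} and the sharp exponential decay of Proposition~\ref{spindecay}; the conformal covariance identity itself is purely algebraic.
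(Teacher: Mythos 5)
Your proposal is correct and follows essentially the same route as the paper: the conformal covariance identity $D_{g'}=F^{-5/4}\,D_g\,F^{3/4}$ (in the paper's notation $D_{g'}=e^{5u/2}D_ge^{-3u/2}$) gives a chirality-preserving bijection of kernels, and the reverse inclusion into $L^2$ is obtained exactly as in the paper by noting that the preimage lies in $\mathrm{Ker}(D_g)$ with $e^{-\beta r}$-weighted $L^2$ bounds and then invoking the exponential decay of Proposition~\ref{spindecay} under Assumption~\ref{assume}. Your forward inclusion via the direct weighted-norm computation (boundedness of $F^{1/2}$) is a harmless simplification of the paper's appeal to exponential decay there.
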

\begin{proof}
Write the conformal factor as $e^{-2u},$ then  
near infinity it has the form $e^{-2u}=\frac{1}{Vr^2},$ so the corresponding Dirac operators are related by
$$D_{g'}=e^{\frac{3u}{2}}D_g e^{\frac{-3u}{2}}.$$
(See \cite[Section 1.4]{Hitchin74}.) Define an injective map
 \begin{align}
 T:\mathrm{Ker}_{L^2_g}(D_g)&\to\mathrm{Ker}_{L^2_{g'}}(D_{g'})\nonumber\\
 h&\mapsto e^{\frac{3u}{2}}h.
 \end{align} 
 Since the $L^2_g$-solutions  of $D_gh=0$ decay exponentially in $r$ by Proposition \ref{spindecay}, $T$ takes $\mathrm{Ker}_{L^2_g}(D_g)$ to $\mathrm{Ker}_{L^2_{g'}}(D_{g'})$ and is therefore well-defined. Now, let $\phi\in \mathrm{Ker}_{L^2_{g'}}(D_{g'}),$ then by Proposition \ref{spindecay}$, e^{\frac{-3u}{2}}\phi$ decays exponentially, since it belongs to $\mathrm{Ker} (D_g)$ and $e^{-\beta r}e^{\frac{-3u}{2}}\phi\in L^2$ for all $\beta>0$. Hence $T$ is an isomorphism.
 \end{proof}
Henceforth we work in the conformally rescaled metric $g'$. In particular, from now on $D_A:= D_{A,g'}$, etc. 

Outside a compact set, the connection $A$ on $\mathcal{E}$ induces an abelian connection 
$\tilde A^{\ab}$ 
on the sum $\oplus_{a=1}^n l_a$ of eigenline bundles  of the holonomy. 
By \eqref{offdiag}, 
\begin{align}|A-\tilde A^{\ab}| = O(e^{-Ny}),\quad\forall N.\end{align}
For large $R$ (such that $\{\nu_j\}\in B_R(0)),$ define the modified connection,
$$A^{\ab}:=\,\mathrm{diag}\left(-i(\lambda_a+\frac{m_a}{2r})\frac{d\tau+\omega}{V}+\pi_k^*\eta_a\right),$$
with $\eta_a$ as in Eq.~\eqref{asyconnect}. 
This connection has the convenient property that the connection matrices are constant in the TN$_k$ fiber in every local tangent frame consisting of vectors commuting with $\p_\tau$. Moreover, by \eqref{asyconnect},
\begin{align}|A-  A^{\ab}| = O(e^{-2y}).\end{align}

Let  $\eta\in C^\infty(\IR)$ be supported in $(-\infty,1)$,  identically $1$ on $(-\infty,0]$, with $|\eta'|\leq 2$. For  $R$ large, we define a new connection $A_R:= \eta(y-R) A + (1-\eta(y-R))A^{\ab}$.  Define $$M_s:=y^{-1}([0,s]).$$ For some $s_0>0$, $M_{s_0}^c$ is a circle bundle  over a cylinder $\mathbb{R} \times S^2$ with  the $S^1$-fibers shrinking rapidly as $y\to\infty$.  $\{ M_y\}_{y} $ is an exhaustion of TN$_k$ by  compact sets.

\begin{lemma} 
$D_{A_R}$ is Fredholm, and $\mathrm{index}\,(D_{A_R}) = \mathrm{index}\,(D_A).$ 
\end{lemma}
\begin{proof}Set 
$A(t) = (1-t)A+tA_R$. We  apply  Lemma \ref{Fred} to $D_{A(t)}$, with  
$X_{j}: = M_{j+s_0}$. In the notation of Lemma \ref{Fred}, $T_j \geq c_1e^{2j}$, and $\epsilon_j\leq c_2$ for some $c_1, c_2>0$.  Assumption \ref{assume} easily implies for some $\kappa>0$, $|\mu_a-m|>\kappa,$ $\forall m\in \IZ, \forall a$. $\lim_{j\to\infty}\frac{\epsilon_j}{T_j} = 0$. Hence Lemma \ref{Fred} implies $D_{A(t)}$ is Fredholm. Clearly  $t\to D_{A(t)}$ is a continuous family of Fredholm operators. The index is a continuous integer valued function on the space of Fredholm operators, and therefore constant on curves. Hence $ \mathrm{index}\,(D_{A }) = \mathrm{index}\,(D_{A(0)})=\mathrm{index}\,(D_{A(1)}) = \mathrm{index}\,(D_{A_R}).$ 
\end{proof}
We remark that  shifting the index  computation to $D_{A_R}$ is not essential; it merely notationally simplifies certain computations by removing numerous exponentially small but nonzero commutators.  

For a subset  $U$  of $S^2$ such that the $S^1$-bundle of \eqref{ConfMet} over it is trivial, we choose a local oriented  $g'$-orthonormal frame $(e_1,e_2,e_3,e_4)$ with 
\begin{align}\label{frame}
e_1=\bar e_1&=\partial_y,&
e_2&=\bar{e}_2-\omega(\bar{e}_2)\partial_\tau,&
e_3&=\bar{e_3}-\omega(\bar{e}_3)\partial_\tau,&
e_4&= e^yV\partial_\tau,
\end{align}
where $\{\bar e_1,\bar{e}_2,\bar{e}_3\}$ is a local oriented orthonormal frame on $\IR\times U$ lifted to TN$_k$ via the above product structure. The  corresponding coframe is  $e^1=dy$, $e^2=\pi^*\bar{e}^2$, $e^3=\pi^*\bar{e}^3$ and $e^4=e^{-y}V^{-1}(d\tau+\omega)$, where  $\{\bar{e}^j\}_{j=2}^3$    is  dual to $\{\bar{e}_j\}_{j=2}^3$ and $\pi$ denotes the $S^1$-bundle projection. For example, using spherical coordinates $(\phi, \theta)$ on $U,$ with polar angle $\phi\in[0,\pi]$ and azimuthal angle $\theta\in[0,2\pi),$ and $U$  containing neither North nor South pole, one can take $\bar{e}_2=\p_\phi$ and $\bar{e}_3=\frac{1}{\sin \phi}\p_\theta$. We recall that  $c^j:=c(e^j)$ and that the chirality operator is $\gamma^5=-c(e^1e^2e^3e^4)=-c^1c^2c^3c^4.$

\begin{lemma}\label{generalind}
The $L^2$-index of $D_A^-:\Gamma(S^-\otimes \mathcal{E})\to\Gamma(S^+\otimes \mathcal{E})$ is given by 
\begin{equation}\begin{split}\label{l2ind}
{}\mathrm{ind}_{L^2} D_A^-&=-\frac{\mathrm{Rank}(\mathcal{E})}{192\pi^2}\int_M\tr_{_{\scriptscriptstyle T(\mathrm{TN}_k)}} \mathcal{R}\wedge  \mathcal{R}
+\frac{1}{8\pi^2}\int_M \tr_{_{\scriptscriptstyle \mathcal{E}}} F\wedge F\\
&{-}\lim_{R\to\infty}\lim_{y\to \infty}\frac{1}{2}\int_{e^{-(2+\delta)y}}^\infty\int_{\partial M_y}\mathrm{tr}\, c(\nu)\gamma^5D_{A_R}e^{-tD_{A_R}^2}(x,x){d\nu_y dt},
\end{split}\end{equation}
where  $\nu$ is the unit outward normal to $\p M_y$, $d\nu_y$ is the induced volume form on $\partial M_y,$ and  $\delta\in (0,1)$. 
 \end{lemma}
(In this section, $x$ will  denote a point in TN$_k$ rather than in $\IR^3$). We call the last summand of 
\eqref{l2ind} {\em the asymptotic contribution}.

\begin{proof}
Let $P^R$ denote the $L^2-$unitary projection onto  $\mathrm{Ker} (D_{A_R})$, and  let $p^R(x,x')$ denote the Schwarz kernel of $P_R$. Then 
$$ \mathrm{ind}_{L^2} D_A^-:=\mathrm{dim}\,\mathrm{Ker}D_{A_R}^--\mathrm{dim}\,\mathrm{Ker}D_{A_R}^+={-}\Tr\,\gamma^5 P^R ={-} \int_M\tr\,\gamma^5 p^R(x,x)d\nu,$$
where  $\Tr$ denotes trace over the Hilbert space of $L^2$ sections, and $\tr$ denotes the pointwise trace of endomorphisms of $S\otimes \mathcal{E}$.

Let $k^R(t,x,x')$ denote the Schwarz kernel of $e^{-tD_{A_R}^2}$.  
Observe that in the strong operator topology $P^R = \lim_{t\rightarrow \infty}e^{-tD_{A_R}^2}.$ Hence 
$p^R(x,x') = \lim_{t\rightarrow \infty}k^R(t,x,x').$ This limit is not uniform in $(x,x')$, but for any compact subset $K$,  
$\int_K\tr\,\gamma^5p^R(x,x)d\nu = \lim_{t\rightarrow\infty}\int_{K}\tr\,\gamma^5 k^R(t,x,x)d\nu.$ (See \cite[Lemma 2.2.3]{Stern1}, replacing $(\frac{ D^+D^-}{w}+1)^{-k+1}$ with $e^{-t D^+D^-}$).  Hence,
\begin{align*}
 \int_M\tr\,\gamma^5 p^R(x,x)d\nu &= \lim_{y\rightarrow\infty}\int_{ M_y}\tr\,\gamma^5 p^R(x,x)d\nu\\
&= \lim_{y\rightarrow\infty}\lim_{t\rightarrow\infty}\int_{M_y}\tr\,\gamma^5 k^R(t,x,x)d\nu.
\end{align*}
Following Callias  \cite[Proposition 1]{Callias}, we use the fundamental theorem of calculus and the divergence theorem to rewrite this as 
\begin{multline*}
 \int_M\tr\,\gamma^5 p^R(x,x)d\nu= \\
= \lim_{y\rightarrow\infty}\left(\int\limits_{e^{-(2+\delta)y}}^\infty\frac{d}{dt}\int\limits_{M_y}\tr\,\gamma^5 k^R(t,x,x)d\nu dt +  \int\limits_{M_y}\tr\,\gamma^5 k^R(e^{-(2+\delta)y},x,x)d\nu \right) \\
= \lim_{y\rightarrow\infty}\left(\frac{1}{2}\int\limits_{e^{-(2+\delta)y}}^\infty \int\limits_{M_y} e_i \tr\,c^i\gamma^5 {D_{A_R}}k^R(t,x,x)d\nu dt  +  \int\limits_{M_y}\tr\,\gamma^5 k^R(e^{-(2+\delta)y},x,x)d\nu \right)\\
= \lim_{y\rightarrow\infty}\left(\frac{1}{2}\int\limits_{e^{-(2+\delta)y}}^\infty  \int\limits_{\p M_y} \tr\,c(\nu)\gamma^5 D_{A_R}k^R(t,x,x)d\nu dt  +\int\limits_{M_y}\tr\,\gamma^5 k^R({e^{-(2+\delta)y}},x,x)d\nu \right).
\end{multline*}   
The second equality in this expression follows from 
\begin{multline}
\int_{M_{y}}\frac{d}{dt}\tr\, \gamma^5 k^R(t,x,x')|_{x=x'}d\nu=-\int_{M_{y}}\tr\, \gamma^5(D_{A_R}^2k^R)(t,x,x')|_{x=x'}d\nu\\
=- \int_{M_{y}}\tr\, \gamma^5(D_{A_R}\circ k^R\circ D_{A_R})(x,x')|_{x=x'}d\nu
=-\frac{1}{2}\int_{M_{y}}\tr\, \gamma^5((D_x+D_{x'}){D_x}k)|_{x=x'}d\nu\\
= \frac{1}{2}\int_{M_{y}}e_i \tr\, c^i\gamma^5 (D_{A_R}\circ k^R(t,x,x))d\nu.
\end{multline}
Here we have used $D_x$ and $D_{x'}$ to distinguish  2 different lifts of $D_{A_R}$ to $M\times M$. 
Following \cite{roe}, we  now construct an approximate heat kernel of the form 
$$k_N^R(t,x,x')=\eta(x,x')h_t(x,x')\sum_{j=0}^N t^j\Theta_j (x,x'),$$
where $h_t(x,x')=\frac{1}{(4\pi t)^2}e^{-\frac{d(x,x')^2}{4t}}$ (with $d(x,x')$ the $g'$ distance between $x$ and $x'$) and $\eta(x,x')=\tilde\eta(d(x,x'))$ is a cut-off function  supported in a neighborhood $\mathcal{N}$ of the diagonal, such that for $(x,x')\in \mathcal{N}$,  $x$ lies in a normal coordinate neighborhood of $x'$ and $x'$ lies in a normal coordinate neighborhood of $x$. It suffices to choose $\tilde \eta$ to be supported on $[0,\frac{1}{2\ell}e^{-y} ]$, identically one on $[0,\frac{1}{4\ell}e^{-y} ]$, and satsifying for $j=1,2$,  $|\frac{d^j}{dt^j}\tilde\eta|\leq c_j e^{jy}$,  for some $c_j>0$.   The evaluation of the trace of the approximate heat kernel is standard, and we will simply refer to  \cite{roe} for the computation of the trace asymptotics. We will also estimate the error for $t= e^{-(2+\delta)y}$ and show that the error terms in this approximation vanish as $y\to \infty$. This estimate is also standard, except for the effect of the small injectivity radius. We include this estimate in order  to clarify the effects of the small injectivity radius on the analysis. The construction of approximate heat kernel in this subsection is only suitable for   $t$ small relative to $e^{-2y}$. 

To define the $\Theta_j$ inductively, let $\Theta_0(x,x')$ denote parallel translation from $x'$ to $x$ along the radial geodesic. 
Compute (see \cite[Lemma 7.12, Lemma 7.13, and Theorem 7.15]{roe}) 
\begin{multline}\label{induct0}
(\frac{\p}{\p t} + D_{A_R}^2)h_t(x,x')\sum_{j=0}^N t^j\Theta_j (x,x')
= h_t(x,x')[\sum_{j=0}^N t^jD_{A_R}^2\Theta_j (x,x')\\
+  \sum_{j=0}^{N-1} t^jr^{-j}(\det g')^{-\frac{1}{4}}   \nabla_{\frac{\p}{\p r}} ( r^{j+1}(\det g')^{\frac{1}{4}}\Theta_{j+1} (x,x'))],
\end{multline}
with $D_{A_R}^2$ differentiating in the $x$ variable. 
Set 
$$ \nabla_{\frac{\p}{\p r}} ( r^{j+1}(g')^{\frac{1}{4}}\Theta_{j+1} (x,x')) = -(g')^{\frac{1}{4}}r^jD_{A_R}^2\Theta_j (x,x'),$$
which we solve by integrating along geodesic rays in a radially covariant constant frame. With this choice, \eqref{induct0} reduces to 
\begin{align}\label{induct1}&(\frac{\p}{\p t} + D_{A_R}^2)(h_t(x,x')\sum_{j=0}^N t^j\Theta_j (x,x'))= h_t(x,x') t^ND_{A_R}^2\Theta_N,
\end{align}
and setting $k_N^R:= \eta h_t \sum_{j=0}^N t^j\Theta_j$, we have 
\begin{align}\label{induct1}&(\frac{\p}{\p t} + D_{A_R}^2)k_N^R= \eta h_t t^ND_{A_R}^2\Theta_N + (\Delta\eta -2\nabla_{\nabla\eta})h_t \sum_{j=0}^N t^j\Theta_j.
\end{align}

We now use Duhamel's principle to estimate the error arising when we use $k_N^R$ to estimate the trace. Let 
$$\epsilon_N(t,x,x'):= (\frac{\p}{\p t} + D_{A_R}^2)k_N^R(t,x,x').$$
Then
\begin{align}\label{duhamel}&k_N^R(t,x,x') - k^R(t,x,x')   = \int_0^t\int_Mk^R(t-s,x,w)\epsilon_N(s,w,x')dwds\nonumber\\
&= \int_0^t\int_Mk_N^R(t-s,x,w)\epsilon_N(s,w,x')dwds \nonumber\\
&-   \int_0^t\int_0^{t-s}\int_M\int_Mk^R(t-s-u,x,z)\epsilon_N(u,z,w)\epsilon_N(s,w,x')dzdwduds.
\end{align}
Taking the trace, we have 
\begin{align}\label{trduhamel}
&\int_{M_y}(\tr \gamma^5k_N^R(t,x,x) - \tr \gamma^5k^R(t,x,x))dx   \nonumber\\
&= \int_{M_y}\int_0^t\int_M\tr \gamma^5k_N^R(t-s,x,w)\epsilon_N(s,w,x')dwdsdx \nonumber\\
&-   \int\limits_{M_y}\int\limits_0^t\int\limits_0^{t-s}\int\limits_M\int\limits_M\tr \gamma^5k^R(t-s-u,x,z)\epsilon_N(u,z,w)\epsilon_N(s,w,x)dzdwdudsdx.
\end{align}
The last summand can be written as 
\begin{align}\label{lastsumm}
\int_0^t\int_0^{t-s}\Tr \gamma^5e^{-(t-s)D_{A_R}^2}\epsilon_N(u)\chi_{M_{y+1}}\epsilon_N(s)\chi_{M_y},
\end{align}
where $\epsilon_N(w)$ denotes the operator with Schwarz kernel $\epsilon_N(w,x,y),$ and $\chi_X$ denotes the characteristic function of the set $X$. We have used the fact that $\epsilon_N$ is supported near the diagonal to insert an additional $\chi_{M_{y+1}}$. Using the boundedness of $e^{-sD_{A_R}^2}$ we have 
\begin{align}
\left|\Tr \gamma^5e^{-(t-s)D_{A_R}^2}\epsilon_N(u)\chi_{M_{y+1}}\epsilon_N(s)\chi_{M_y}\right|
&\leq \left\|\gamma^5e^{-(t-s)D_{A_R}^2}\epsilon_N(u)\chi_{M_{y+1}}\epsilon_N(s)\chi_{M_y}\right\|_{Tr}\nonumber\\
&\leq \| \epsilon_N(u)\chi_{M_{y+1}}\|_{HS}\|\epsilon_N(s)\chi_{M_y}\|_{HS}.
\end{align}
Here $\|\cdot\|_{Tr}$ denotes trace class norm and  $\|\cdot\|_{HS}$ denotes Hilbert-Schmidt norm. (See \cite[Section 2]{SternCHA}
for a summary of relevant properties of these norms.) 
  
The $\Theta_j$ can be computed recursively in terms of curvatures and their derivatives (see for example \cite[Section 3.3]{SternCHA}), and are therefore bounded. Hence, \eqref{induct1} gives 
\begin{multline}
|\epsilon_N(t,x,x')|\leq C_0\eta(x,x')e^{-\frac{d^2(x,x')}{4t}}t^N+C_1\left(e^{2y}+e^{y}\frac{d(x,x')}{t}\right)e^{-\frac{d^2(x,x')}{4t}}\chi_{\text{support }d\eta}\\
\leq \left[C_0 t^N+C_1\left(e^{2y}+ \frac{1}{2\ell t}\right)e^{-\frac{e^{-2y}}{64l^2t}}\right]\chi_{\mathcal{N}}.
\end{multline}  
Here we have used $d(x,x')\geq \frac{e^{-y}}{4l}$ on the support of $d\eta$. For $t\leq e^{-(2+\delta)y}$, we have 
\begin{align}|\epsilon_N(t,x,x')|\leq \left[C_0 e^{-N(2+\delta)y}+C_1\left(e^{2y}+ \frac{e^{(2+\delta)y}}{2\ell}\right)e^{-\frac{e^{\delta y}}{64\ell^2}}\right]\chi_{\mathcal{N}} = O(e^{-2Ny}).
\end{align}  
Recalling that 
$$\|\epsilon_N(s)\chi_{M_y}\|_{HS}^2 = \int_{M_{y+1}\times M_y} |\epsilon_N(s,x,x')|^2dxdx'\leq Ce^{-2Ny},$$ 
we see that the quantity in \eqref{lastsumm} is exponentially decreasing. Similarly, 
$$\left|\int_{M_y}\int_0^t\int_M\tr\, \gamma^5 k_N^R(t-s,x,y)\epsilon_N(s,y,x')dydsdx\right|\leq Ce^{-2Ny},$$ 
and we deduce 
\begin{align}\label{trduhamel2}&\int_{M_y} \tr\, \gamma^5 k^R(t,x,x)dx = \int_{M_y}  \tr\, \gamma^5 k_N^R(t,x,x)dx  +O(e^{-Ny}),
\end{align}
for $t\leq e^{-(2+\delta)y}.$

  It follows, as in the compact case, (see e.g. \cite[Chapter 12]{roe}) that 
\begin{align}
\int_{M_y} \tr\, \gamma^5k^R\left(e^{-(2+\delta)y},x,x\right)dx =\int_{M_y} \mathrm{ch}(\mathcal{E},A_R)\wedge \hat{A}(\mathrm{TN}_k)+O(e^{-Ny}).
\end{align}  
In dimension four, 
$\hat{A}=1-\frac{1}{24}p_1 =1+\frac{1}{192\pi^2}\tr\, \mathcal{R}\wedge \mathcal{R},$ where $p_1$ is the first Pontryagin form, and the Chern character is
$$\mathrm{ch}(\mathcal{E},A_R)=\mathrm{Rank}(\mathcal{E})+\frac{i}{2\pi}\tr\, F_{A_R}-\frac{1}{8\pi^2} \tr\, F_{A_R}\wedge F_{A_R}.$$ 
Therefore, 
\begin{multline}
\lim_{y\to\infty}\int_{M_y} \tr \gamma^5k^R(e^{-(2+\delta)y},x,x)dx = \int_{M }(\frac{\mathrm{Rank}(\mathcal{E})}{192\pi^2}\tr \,\mathcal{R}\wedge \mathcal{R}{-}\frac{1}{8\pi^2} \tr F_{A_R}\wedge F_{A_R})\\
= \int_{M }(\frac{\mathrm{Rank}(\mathcal{E})}{192\pi^2}\tr\, \mathcal{R}\wedge \mathcal{R}{-}\frac{1}{8\pi^2} \tr F_{A}\wedge F_{A}).
\end{multline}
The last equality follows from using  Chern-Weil theory to express $ \tr F_{A_R}\wedge F_{A_R} -  \tr F_{A}\wedge F_{A}$ as the differential of a rapidly decreasing three-form. 
 \end{proof}

\begin{lemma}\label{Pont}
The integral of the first Pontryagin form over TN$_k$ is
\begin{equation}
\frac{1}{192\pi^2}\int_{\mathrm{TN}_k}\tr\, \mathcal{R}\wedge \mathcal{R}=\frac{k}{12}.
\end{equation}
\end{lemma}
\begin{proof}
This is computed in \cite{HAWKING197781} and can also be established by direct calculation. Computing in the original $g$ metric, we have, as in \cite{Nergiz:1995qg}, that $\tr \,\mathcal{R}\wedge \mathcal{R}=\frac{1}{2}(\Delta\Delta V^{-1})dx^1\wedge dx^2\wedge dx^3\wedge d\tau$. Since $V$ is harmonic, 
$ \Delta V^{-1}=-2\frac{|\nabla V|^2}{V^3},$ 
 and $\Delta V^{-1}=-\frac{4}{r_\sigma}+O(r_\sigma^0)$ near each center $\nu_\sigma$. Applying Stokes theorem, the computation of the Pontryagin number reduces to  
$$\frac{1}{192\pi^2}\int\tr\, \mathcal{R}\wedge \mathcal{R}=-\sum_{\sigma}\frac{1}{192\pi}\int_{S^2_{\nu_\sigma}}\nabla_n\big( \frac{|\nabla V|^2}{V^3} \big) d\mathrm{Vol}_{S^2_{\nu_\sigma}},$$ where $S^2_{\nu_\sigma}$ denotes a small sphere centered at $\nu_\sigma\in\mathbb{R}^3$, and $n$ is the outward unit normal.  The integral over each center $\nu_\sigma$  contributes $\frac{1}{12}$.  Since the Pontryagin forms are invariant under conformal transformations (see \cite{CS}), the Pontryagin number is the same for the $g$ and $g'$ metrics.
\end{proof}


\subsection{Approximate Heat Kernel for arbitrary $t$, large $y$}

In Lemma \ref{generalind}, expression (\ref{l2ind}) for the index of $D_A^-$ involves two summands:  the Atiyah-Singer integrand and the asymptotic contribution. The latter is more subtle; hence we will give more details of the analysis of this term. We first specify an iterative semilocal approximation to the heat kernel. We then prove that substituting the approximation for the exact kernel computes the asymptotic contribution to the index.  

Consider an open neighborhood $\mathcal{U}\subset M_{R+2}^c$  admitting a section of the $S^1$ bundle, 
containing a point $p$. Introduce ($p$ and section dependent) coordinates to $\mathcal{U}\times\{p\}$ with 
$x:=(y, \phi,\theta,\tau)$ as in Section~\ref{Sec:Prelim}, so that $x(p) = (y(p),\frac{\pi}{2},0,0)$ and so that 
$$\pi_k^{-1}\Bigg(\bigg(\begin{smallmatrix}\cos(\theta)\sin(\phi)&-\sin(\theta)&-\cos(\theta)\cos(\phi)\\
\sin(\theta)\sin(\phi)&\cos(\theta)&-\sin(\theta)\cos(\phi)\\ 
\cos(\phi)&0& \sin(\phi)\end{smallmatrix}\bigg)\pi_k(p)\Bigg)$$ 
has coordinates $(y(p),\phi,\theta,\ast)$. Choose a frame $\{e_j\}_{j=1}^4$ defined by these coordinates as in the  paragraph following \eqref{frame}. 
Let $b=(b_1,b_2,b_3)=(y, \phi,\theta)$  so that $x = (b,\tau)$.  We use the continuous Fourier transform in the base variables and the  discrete Fourier expansion in  $\tau$. Let $v$  denote coordinates dual to the $b$ variables.  Write $u = (v,\frac{ \kappa}{{ 2\pi}})=(v_1,v_2,v_3,\frac{ \kappa}{{ 2\pi}})$, where $ \kappa\in \IZ$ labels  
the discrete Fourier modes.  We make use of the compact notation $$\int\ldots du := \sum_{ \kappa\in\IZ}\int_{\IR^3}\ldots dv.$$

For a suitable contour $C$ surrounding $[0,\infty)\subset\IR\subset\mathbb{C}$ oriented counterclockwise, we use the Cauchy integral formula to write
$$e^{-tD_{A_R}^2}=\frac{-1}{2\pi i}\oint_Ce^{-tz}(D_{A_R}^2-z)^{-1}dz{.}$$
Hence,   an approximation of $(D_{A_R}^2-z)^{-1}$ yields an approximation of $e^{-tD_{A_R}^2}$. 
We iteratively construct an approximation of the Schwartz kernel of $(D_{A_R}^2-z)^{-1}$ of the form
\begin{multline}\label{approx}
\sum_{j=0}^N\int e^{2\pi i(x-x')u}\sigma_z^{-j-1}(x)q_j(x,x')\psi(x,x')\,du\\
={\sum_{j=0}^N\sum_{ \kappa\in \IZ}\int_{\IR^3} e^{2\pi i(b-b')v}{e^{{{i}(\tau-\tau') \kappa}}}\sigma_z^{-j-1}q_j(x,x')\psi(x,x')\,dv,}
\end{multline}      
{where $\sigma_z$ is constructed below from the symbol of $D_{A_R}^2$, $\psi$ is a fiber isomorphism specified below, and the $q_j$ are defined inductively.} 
{Observe} that $$\int_{\IR^3}  e^{2\pi i(y-y')v_1}e^{2\pi i(\phi-\phi')v_2}e^{2\pi i(\theta-\theta')v_3}dv_1dv_2dv_3$$  represents the ({distributional}) integral kernel of the delta distribution with respect to the form $dy\wedge  d\phi\wedge d\theta$. 

We now specify each term in (\ref{approx}). First we  specify on $\mathcal{U}$
a frame dependent identification   $\psi(x,x')\in\textrm{Hom}(S_{x'}\otimes \mathcal{E}_{x'},S_x\otimes \mathcal{E}_x)$ of the fibers of the bundle $S\otimes \mathcal{E}$ at $x'$ and at $x$. The local frame $\{e_j = e_j(x,x')\}_{j=1}^4$ on $\mathcal{U}\times \{x'\}$  defines a section of the bundle of oriented orthonormal frames. We lift this to a local section of the principal spin bundle and use it to define a local frame $\{f_a = f_a(x,x')\}_{a=1}^4$ for the spin bundle.   Define $\psi^S(x,x')=f_a(x,x')\otimes f_a^*(x',x')$ where $\{f_a^*\}$ denotes the dual coframe.
Take a local unitary $H_p$ (holonomy) eigenframe frame $\{s_l\}$ of $\mathcal{E}$ on $\mathcal{U}$ and define $\psi^{\mathcal{E}}(x,x')=s_l(x)\otimes s^*_l(x')$, where again $\{s^*_l\}$ denotes the dual frame. Now set $\psi = \psi^S\otimes \psi^{\mathcal{E}}$.  Observe that 
\begin{align}
 \nabla^S_{e_j}\psi^S(x,x')&=\Gamma_j^S(x)\psi^S(x,x'),&
 \nabla^{ab}_{e_j}\psi^{\mathcal{E}}(x,x')&={A^{\ab}_{j}(x)\psi^{\mathcal{E}}(x,x'),} \label{con2}
\end{align}
where $e^j\otimes\Gamma_j^S(x)$ and $e^j\otimes A^{\ab}_{j}$ are the connection one-forms for the given frames.
For the computations below, we define $\psi^{\mathcal{E}}(x,x')$ as follows. Let $\tau(x)$ denote the $\tau$ coordinate of $x$ and let 
$$V_{x' }:= \{x\in \mathcal{U}: \tau(x) = \tau(x')\}$$ denote a section of the $S^1$ bundle $\mathcal{U}$ containing $x'$.     Pick a unitary eigenbasis $\{s_a(x')\}_a$, at $x'$, of the holonomy operator. Extend this basis to a frame on $V_{x'}$ by radial parallel translation along $V_{x'}$ with respect to $\nabla^{\mathrm{ab}}$. This gives a frame (diagonal with respect to the holonomy) at each point $(y,\phi,\theta, \tau(x'))$. Extend this to a frame $\{s_a(x,x')\}_a$ over all of $\mathcal{U}$   so that its coefficients in the $\{w_a\}_a$ frame constructed in the first paragraph of section \ref{Sec:Asymp} are constant in each circle fiber.   Restricted to $V_{x'}$, this frame satisfies 
\begin{align} \nabla^{\ab}_{\bar e_m}s_a(x) =  \frac{1}{2}F^{\ab}(r_V\frac{\p}{\p r_V},\bar e_m )s_a(x,x')+O(r_V(x,x')^2), \forall x\in V_{x'},
\end{align}
where $r_V(x,x')$ denotes the distance function on $V_{x'}$ from $x$ to $x'$. (See, for example, \cite[Lemma 3.18]{SternCHA}.) 
By \eqref{odeabc},  
\begin{align}\p_\tau\langle \nabla_{e_j}^{\ab}s_a,s_a\rangle = O(e^{-2y}). 
\end{align}
Hence $\forall x\in \mathcal{U}$, 
\begin{align}\label{connest} \nabla_{\bar e_m}^{\ab}s_a =  \frac{1}{2}F^{\mathrm{ab}}_A(r_V\frac{\p}{\p r_V},\bar e_m )s_a+O(r_V(x,x')^2) +O(e^{-2y}).
\end{align}
We also have from Theorem \ref{asyholo}
\begin{align}\label{conhol} \nabla_{ e_4}s_a  =  -i(  e^y\lambda_a+\frac{m_a}{2})s_a  +O(e^{-y}).
\end{align}
 
Write {\begin{equation}\label{cdu}
e^{{-{i} \tau  \kappa}}e^{-2\pi ib\cdot v}D_{A_R} e^{2\pi ib\cdot v}e^{{{i} \tau  \kappa}}= D_{A_R} +c(d u),
\end{equation}
thus determining $c(du)$ to be  
\begin{equation}\label{cdudef}
c(d u)=2\pi iv_1c^1+\big(2\pi iv_2-\omega(\p_\phi){{i} \kappa}\big)c^2
+
\frac{2\pi iv_3-\omega(\p_\theta){{i} \kappa}}{\sin\phi}c^3+e^yV{{i} \kappa} c^4.
\end{equation}
In order to exploit the holonomy, we  modify the decomposition \eqref{cdu}. As in Section~\ref{Sec:Asymp}, we have $\mu={i{A^{\ab}}{(\p_\tau)}}=\mathrm{diag}(\frac{\lambda_a}{\ell}+\frac{1}{2}e^{-y}\vartheta_a+O(e^{-2y}))$, and we set $\Lambda = \text{diag}(\frac{\lambda_a}{\ell}).$

 Write analogously to \eqref{cdu} 
\begin{equation}\label{dhat}
e^{{-{i} \tau  \kappa}}e^{-2\pi ib\cdot v}D_{A_R} e^{2\pi ib\cdot v}e^{{{i} \tau  \kappa}}=\hat{D}+c(\delta u),
\end{equation}
with   
\begin{multline}\label{cdeludef}
c(\delta u)=2\pi iv_1c^1+\big(2\pi iv_2-\omega(\p_\phi){{i}( \kappa- \Lambda)}\big)c^2\\
+\frac{1}{\sin\phi}\big(2\pi iv_3-\omega(\p_\theta){{i}( \kappa- \Lambda)}\big)c^3
+e^yV{{i}( \kappa- \Lambda)}c^4\\
=2\pi iv_1c^1+ 2\pi iv_2 c^2 +
\frac{2\pi i}{\sin\phi} v_3 c^3+e^yV{{i}( \kappa- \Lambda)}(c^4- \frac{e^{-y}}{V}c(\omega)),
\end{multline}
and $\hat D$ is defined by  equation (\ref{dhat}). This definition shifts the unbounded part of the connection matrix to the $c(\delta u)$ term. It follows  that
\begin{multline}\label{cdu2}
c(\delta u)^2=(2\pi v_1)^2+{e^{2y}V^2( \kappa- \Lambda)^2}\\
+\left(2\pi v_2-{{\omega(\p_\phi)}}( \kappa- \Lambda)\right)^2
+\frac{1}{\sin^2\phi}\left(2\pi v_3-{{\omega(\p_\theta)}}( \kappa- \Lambda)\right)^2.
\end{multline}

For $z\in\mathbb{C},$ set
\begin{equation}\label{sigmadef}\sigma_z=c(\delta u)^2-z.\end{equation}}   
Assumption~\ref{assume} implies that for $R$ large no diagonal entry of $( \kappa- \Lambda)$ is zero, therefore $|\sigma_z|\geq ce^{2y}-|z|$, for some $c\in \IR $.

We introduce the differential operator  
\begin{align}L:&=\hat{D}^2+\{\hat{D},c(\delta u)\} \nonumber\\
&= \hat{D}^2+L_1+  c(d\delta u)+d^*\delta u,
\end{align}
where 
\begin{align}\label{L1def}
L_1:=&-4\pi i v_1\nabla_{e_1} -2(2\pi iv_2-i\omega(\bar e_2)(\kappa-\Lambda)) (\nabla_{e_2}-i\omega(\bar e_2)\Lambda)\nonumber\\
&
- \frac{2}{\sin\phi}(2\pi i v_3- i\omega( \p_\theta)(\kappa-\Lambda)) (\nabla_{e_3}-i\omega(\bar e_3)\Lambda)\nonumber\\
& -2e^yV i( \kappa- \Lambda)(\nabla_{e_4}+ie^yV\Lambda),
\end{align} 
and where $\delta u$ is the covector defined by \eqref{cdeludef}. Then 
\begin{align}\label{Dsigmaj}
  e^{-2\pi i(x-x')u}(D_{A_R}^2-z)e^{2\pi i(x-x')u} &=L+\sigma_z.
\end{align}
Let $\tilde \zeta(t)$ be a smooth function supported in $(-2,2)$ and identically $1$ on $[-1,1]$, with derivative $|d\tilde\zeta|\leq 2.$ Set $ \zeta:M\times M\to \IR$ to be 
\begin{align}\label{zeta}
\zeta(p,q) = \tilde \zeta\left(e^{\frac{3}{4}y(q)}d(\pi_k (p),\pi_k(q))\right).
\end{align}
 Here the distance $d(\cdot,\cdot)$ is with respect to the new metric $g'.$ (In the following construction, the $e^{\frac{3}{4}y(q)}$ factor in the definition of $\zeta$ can be replaced by $e^{\alpha y(q)}$ for any $\alpha\in (0,1)$.  If $\alpha \geq 1$, stationary phase arguments no longer imply the error terms associated with derivatives of the cutoffs are rapidly decreasing. The larger we choose $\alpha$, however, the better our subsequent bounds on connection matrices.) On the support of $\zeta(x,x')$,  the connection matrices in the frame $\{s_a(x,x')\}_a$  satisfy  
\begin{align}\label{connest2} A_R(\bar e_m) =  \frac{1}{2}F_A^{\ab}(r_V\frac{\p}{\p r_V},\bar e_m ) +O(e^{-\frac{3}{2}y})
=O(e^{-\frac{3}{4}y}).
\end{align}

Then  
\begin{multline*}
 (D_{A_R}^2-z)\int e^{2\pi i(x-x')\cdot u}\zeta(x,x')\sigma_z^{-j-1}q_j(x,x')\psi(x,x')du =\\ 
\shoveleft\int e^{2\pi i(x-x')\cdot u}(L+\sigma_z)\zeta(x,x')\sigma_z^{-j-1}q_j(x,x')\psi(x,x')du=\\ 
\shoveleft\zeta(x,x')\int e^{2\pi i(x-x')\cdot u} (\sigma_z^{-j}q_j(x,x')\psi(x,x')  +  L\sigma_z^{-1-j}q_j(x,x')\psi(x,x'))du \\
\shoveright \hfill+[D_{A_R}^2,\zeta(x,x')]\int e^{2\pi i(x-x')\cdot u}\sigma_z^{-j-1} q_j(x,x')\psi(x,x'))du.
\end{multline*}
Set $q_0(x,x') \in \mathrm{End}(S_x\otimes \mathcal{E}_x)$ to be the identity, and define $q_j(x,x')\in \mathrm{End}(S_x\otimes \mathcal{E}_x)$  by setting  
\begin{align}\label{defineqj}q_{j }  = -   \left(\sigma_z^{j }L\sigma_z^{-j }q_{j-1}\psi\right)\psi^{-1}
=    (-1)^{j}\sigma_z^{j}\left((L\sigma_z^{-1})^{j}\psi\right)\psi^{-1},\end{align}
where the $\sigma_z^{-1}$ in $(L\sigma_z^{-1})^{j }$ denotes the operation of left multiplication by $\sigma_z^{-1}$. In particular, $(L\sigma_z^{-1})^{j }$ denotes a $2j$ order partial differential operator which acts on the  $\psi$ factor but not  the $\psi^{-1}$. 
 We define an approximate resolvent kernel by 
\begin{align}q_z^N(x,x'):= \sum_{j=0}^N\zeta(x,x')\int e^{2\pi i(x-x')\cdot u}\sigma_z^{-j-1}q_j(x,x')\psi(x,x')du,
\end{align}
and let $Q^N_z$ denote the operator with Schwartz kernel $q^N_z$. 
Then
\begin{multline}\label{epsdef}
(D_{A_R}^2-z) q_z^N(x,x')\\
= \zeta(x,x')\int e^{2\pi i(x-x')\cdot u}\psi(x,x')du
+ \zeta(x,x')\int e^{2\pi i(x-x')\cdot u}L\sigma_z^{-N-1}q_N\psi(x,x')du\\
\shoveright{+\sum_{j=0}^N[D_{A_R}^2,\zeta(x,x')]\int e^{2\pi i(x-x')\cdot u}\sigma_z^{-j-1}q_j(x,x')\psi(x,x')du}\\
\shoveleft{= I+ \zeta(x,x')\int e^{2\pi i(x-x')\cdot u}L\sigma_z^{-N-1}q_N\psi(x,x')du}\\
\shoveright \hfill+\sum_{j=0}^N[D_{A_R}^2,\zeta(x,x')]\int e^{2\pi i(x-x')\cdot u}\sigma_z^{-j-1}q_j(x,x')\psi(x,x')du.
\end{multline}
We now define our approximate heat kernel, 
\begin{align}
k^R_N(t,x,x'):=   \frac{-1}{2\pi i}\int\limits_Ce^{-tz}q_z^N(x,x') dz,
\end{align}
and let $K_{t,N}$ denote the operator with Schwarz kernel $k^R_N(t,\cdot,\cdot)$.

\begin{lemma}\label{verification}
Let $0\leq \tilde\eta\leq 1$ be a cutoff function supported in $M_{T}^c$, $T\geq R+4$ large.
For any  $t{>} 0$ and {$N$ large},
\begin{align}|\Tr\gamma^5 \eta e^{-tD_{A_R}^2} - \Tr\gamma^5 \eta K_t^N|
=\begin{cases}
 O(   e^{-(2N-3 )T} ),& 
 t\geq e^{-2T}\\
  O(   t^{\frac{N}{2}-1}e^{3T}),& 
  t< e^{-2T}
\end{cases}. 
\end{align}
\begin{align}
|\Tr\gamma^5 \eta c(dy)D_{A_R}e^{-tD_{A_R}^2} - \Tr\gamma^5 \eta c(dy)D_{A_R}K_t^N|=
\begin{cases} 
O(   e^{-(2N-4 )T} ),&
t\geq e^{-2T}\\
 O(   t^{\frac{N}{2}-2}e^{3T}),&
 t< e^{-2T}
\end{cases}. 
\end{align}
\end{lemma}
\begin{proof}
For any $\phi\in H_1^2(S\otimes \mathcal{E})$, one has $\mathrm{Im} ((D_{A_R}^2-z)\phi,\phi)=-\mathrm{Im} (z)\|\phi\|^2$. Thus, 
$\|(D_{A_R}^2-z)\phi\|_{L^2}\geq |\mathrm{Im}(z)|\|\phi\|$. This implies that if   $\mathrm{Im}(z)\neq 0$, then $D_{A_R}^2-z$ is injective with closed range, and $\|(D_{A_R}^2-z)^{-1}\|_{\mathrm{op}}\leq |\mathrm{Im}(z)|^{-1}$.

Consider a counterclockwise oriented curve $C_t$ surrounding the spectrum of $D_{A_R}^2$ defined as follows: $C_t$ is the union of a semicircle $\{ z : |z|=1/t, \mathrm{Re}\, z\leq 0 \}$ and two horizontal half-lines $y = \pm \frac{1}{t}$, $x\geq 0$.  
Observe that $|e^{-tz}|\leq e$ for any  $z\in C_t$.  Moreover, for all  $z\in C_t$,  
$$\|(D_{A_R}^2-z)^{-1}\|_{\mathrm{op}}\leq t.$$

Let $\epsilon_z^N:=I-(D_{A_R}^2-z)Q^N_z$.  
 Then  
\begin{align*}e^{-tD_{A_R}^2} - K_t^N &=   \frac{-i}{2\pi }\int_{C_t}e^{-tz}((D_{A_R}^2-z)^{-1} - Q^N_z)dz\\
&=   \frac{-i}{2\pi }\int_{C_t}e^{-tz}(D_{A_R}^2-z)^{-1}\epsilon_z^Ndz\\
&=   \frac{-i}{2\pi }\int_{C_t}e^{-tz}Q_z^N  \epsilon_z^N dz+\frac{-i}{2\pi }\int_{C_t}e^{-tz}(D_{A_R}^2-z)^{-1}(\epsilon_z^N)^{2}dz.
\end{align*}
Taking the trace yields 
\begin{align*}
\Tr\gamma^5 \eta (e^{-tD_{A_R}^2} - K_t^N) &=  \Tr\gamma^5 \eta \frac{-i}{2\pi }\int_{C_t}e^{-tz}Q^N_z\epsilon_z^Ndz \\
&+  \Tr\gamma^5 \eta \frac{-i}{2\pi }\int_{C_t}e^{-tz}(D_{A_R}^2-z)^{-1}(\epsilon_z^N)^2dz. 
\end{align*}
Hence
\begin{align}\label{errorinf}
|\Tr\gamma^5 \eta (e^{-tD_{A_R}^2} - K^N_t)|&\leq 
\left| \Tr\gamma^5 \eta \frac{1}{2\pi }\int_{C_t}e^{-tz}Q^N_z\epsilon_z^Ndz\right| \\
&+      \frac{1}{2\pi }\int_{C_t}e^{-t\mathrm{Re}\,{z}}t\left|\sqrt{\eta}\epsilon_z^N \right|_{HS}
\left|\epsilon_z^N\sqrt{\eta}\right|_{HS}
d|z|.\nonumber
\end{align}  
By \eqref{epsdef} we can expand the Schwarz kernel 
$\epsilon_z^N(x,x')$ of $\epsilon_z^N$ 
as 
\begin{align}\epsilon_z^N(x,x') =   \epsilon_{z,1}^N(x,x')+\epsilon_{z,2}^N(x,x'),
\end{align}
with  
 $$\epsilon_{z,1}^N(x,x')= \zeta(x,x')\int e^{2\pi i(x-x')\cdot u}L\sigma_z^{-N-1}q_N(x,x')\psi(x,x')du,$$
and
$$\epsilon_{z,2}^N(x,x') := \sum_{j=0}^N[D_{A_R}^2,\zeta(x,x')]\int e^{2\pi i(x-x')\cdot u}\sigma_z^{-j-1}q_j(x,x')\psi(x,x')du.$$
We are thus left to consider three terms (after applying the arithmetic/geometric mean to eliminate cross terms) to estimate  the right-hand side of \eqref{errorinf}: the contributions from $|\epsilon_{z,1}^N|_{HS},$ $|\epsilon_{z,2}^N|_{HS},$ and $| \Tr\gamma^5 \eta \frac{1}{2\pi }\int_{C_t}e^{-tz}Q^N_z\epsilon_z^Ndz|$. 
We will estimate the first term in detail. The remaining terms are similar, except  some terms in the trace integral require integration by parts in $z$ before estimating, and some terms in the $|\epsilon_{z,2}^N|_{HS}$ contribution require integration by parts in the phase space variable.

Define the shifted variables
$$v(j) = \left(2\pi v_1, 2\pi v_2- \omega(\p_\phi) ( \kappa-\frac{\lambda_j}{\ell}),\frac{2\pi v_3- \omega(\p_\theta) ( \kappa-\frac{\lambda_j}{\ell})  }{\sin(\phi)}\right).$$
Observe that $L$ has coefficients that are polynomials of degree at most one in the variables $v$ and $e^y( \kappa-\frac{\lambda_j}{\ell})$. Also $\sigma_z^{-1}$  is a rational function of degree $-2$ in the same variables. Hence $(L\sigma_z^{-N-1}q_N \psi)\psi^{-1}= (-1)^N((L\sigma_z^{-1})^{N+1}\psi)\psi^{-1}$ is a diagonal matrix whose j-th entry is a sum of terms of the form 
\begin{align}\sum_{ \kappa\in\IZ}  p_a(v(j),e^y( \kappa-\frac{\lambda_j}{\ell})) \left[|v(j)|^2+e^{2y}V^2( \kappa-\frac{\lambda_j}{\ell})^2-z\right]^{-N_a} .
\end{align}
Here $N+1\leq  N_a\leq   3N+3,$ and each $p_a$ is a Clifford-bundle-valued  polynomial in $v(j)$ and $e^y( \kappa-\frac{\lambda_j}{\ell})$ with bounded coefficients and of degree  $d_a<N_a$, with $ N_a-d_a\geq N+1$.
Let 
$$\epsilon_0:= \frac{1}{2}\min_{ \kappa,j}\left\{( \kappa-\frac{\lambda_j}{\ell})^2\right\}.$$
Using the arithmetic geometric mean inequality, $\prod_js_j^{a_j}\leq \sum_j\frac{a_j}{\sum_ia_i}s_j^{\sum_ia_i}$, we can estimate   $|\epsilon_{z,1}^N(x,x')|$ when $\mathrm{Re}\, z<\epsilon_0e^{2y}$ for some $C_N,\tilde C_N>0$ and for any $a\in[0,\frac{N}{2}-2)$ by  
\begin{multline*}
 |\epsilon_{z,1}^N(x,x')|\leq \sum_j\zeta(x,x')C_N\sum_{ \kappa\in\IZ} t^a\int_{\IR^3}\frac{dv}{(|v(j)|^2+e^{2y}( \kappa-\frac{\lambda_j}{\ell})^2)^{\frac{N+1}{2}-a}}\\
= 
\zeta(x,x')\sum_j\tilde C_N\sum_{ \kappa\in\IZ}\frac{t^ae^{-(N-2-2a)y}}{| \kappa-\frac{\lambda_j}{\ell}|^{(N-2-2a)}}.
\end{multline*}
Hence, for $N\geq 4+2a$, and $\mathrm{Re}\, z<\epsilon_0e^{2y}$, 
\begin{align}\label{ezest}\|\sqrt{\eta}\epsilon_{z,1}^N \|_{L^2(M\times M)}= O(t^ae^{-(N-\frac{3}{2}-2a)T}).\end{align}
Here we have used the fact that (in the $g'$ metric) $\mathrm{Vol}(M)<\infty$ with $\mathrm{Vol}(M_T^c) = O(e^{-T}).$  
The $t^a$ factor comes from the estimate 
 $|(|v(j)|^2+e^{2y}( \kappa-\frac{\lambda_j}{\ell})^2-z)^{-a}|< t^a$ on $C_t$. This is only useful for $t\leq O(e^{-2y})$. Otherwise we will choose $a=0$. 

Hence,
\begin{align}\label{ezcont}
\int\limits_{C_t:\mathrm{Re}\,(z) <\epsilon_0e^{2T}}e^{-tw}t\|\sqrt{\eta}\epsilon_{z,1}^N \|_{HS}\|\epsilon_{z,1}^N \sqrt{\eta}\|_{HS}|dz|\leq  O(t^{2a-1}e^{-(2N-3-4a)T}).\end{align}

When $w:=\mathrm{Re}\, z\geq \epsilon_0e^{2y}$, we have the weaker estimate : 
$$  |\epsilon_{z,1}^N(x,x')|\leq  \sum_j\zeta(x,x')\tilde C_N\sum_{ \kappa\in\IZ} \int_{\IR^3}\frac{(|v|^{N_j-N-1}+|e^{y}( \kappa-\frac{\lambda_j}{\ell})|^{N_j-N-1})dv}{([|v|^2+e^{2y}( \kappa-\frac{\lambda_j}{\ell})^2-w]^2+t^{-2})^{\frac{N_j+1}{4}}}.$$
We  split the $ \kappa$ sum  into terms where $w\leq \frac{1}{2} e^{2y}( \kappa-\frac{\lambda_j}{\ell})^2$, and terms where  $w\geq \frac{1}{2} e^{2y}( \kappa-\frac{\lambda_j}{\ell})^2$. The first set is infinite and can be treated exactly as the case of $|z|<\epsilon_0e^{2y}$. It's contribution to the Hilbert-Schmidt norm is  again $O(e^{-(N-\frac{1}{2})T})$. We estimate the remaining terms as follows.  
\begin{multline}
\sum_j\zeta(x,x')\hat C_N\sum_{e^{2y}| \kappa-\frac{\lambda_j}{\ell}|^2\leq 2w}\int_{\IR^3}\frac{(|v|^{N_j-N-1}+|e^{y}( \kappa-\frac{\lambda_j}{\ell})|^{N_j-N-1})dv}{([|v|^2+e^{2y}( \kappa-\frac{\lambda_j}{\ell})^2-w]^2+t^{-2})^{\frac{N_j+1}{4}}}\\
\leq \sum_j\zeta(x,x')\hat C_N\sum_{e^{2y}| \kappa-\frac{\lambda_j}{\ell}|^2\leq 2w}\frac{1}{w^{\frac{N-1}{2}}}\int_{\IR^3}\frac{(|v|^{N_j-N-1}+ 1)dv}{([|v|^2+\frac{e^{2y}( \kappa-\frac{\lambda_j}{\ell})^2}{w}-1]^2+t^{-2}w^{-2})^{\frac{N_j+1}{4}}}\\
\leq \sum_j\zeta(x,x')  C_N'\sum_{| \kappa-\frac{\lambda_j}{\ell}|^2\leq 2e^{-2y}w} t^{\frac{N_j+1}{2}}w^{\frac{N_j-N+2}{2}}\\
\leq \sum_j\zeta(x,x')  C_N' e^{- y} t^{\frac{N_j+1}{2}}w^{\frac{N_j-N+2}{2}}.
\end{multline}
Hence, for $N\geq 4$ and $\mathrm{Re}\,(z)\geq\epsilon_0e^{2T}$,  
$$\|\sqrt{\eta}\epsilon_{z,1}^N \|_{L^2(M\times M)}^2= O(e^{-3T} t^{ N_j+1 }w^{ N_j-N+2 }).$$
These terms contribute to \eqref{errorinf} with a term bounded by 
\begin{multline}\label{cont2}
O(e^{-3T})\int_{\epsilon_0e^{2T}}^\infty e^{-tw}t^{ N_j+2 }w^{ N_j-N+2 }dw
 = O\left( t^{ N_j+2 }e^{-t\epsilon_0e^{2T}} e^{ (2N_j-2N+1)T} \right)\\
\leq O\left( \max\{t^{ 3N+5 }e^{-t\epsilon_0e^{2T}} e^{ (4N+7)T }, t^{ N +3}e^{-t\epsilon_0e^{2T}} e^{ 3T}\}\right).
\end{multline}

Combining \eqref{ezcont} and \eqref{cont2} and the corresponding results for the other two contributions then yields 
\begin{multline}\label{errorinf2}
   \left| \Tr\gamma^5 \eta \frac{1}{2\pi }\int_{C_t}e^{-tz}Q^N_z\epsilon_z^Ndz\right|
+ \frac{1}{2\pi }\int_{C_t}e^{-t\mathrm{Re}\,{z}}|t|\left|\sqrt{\eta}\epsilon_z^N \right|_{HS}
\left|\epsilon_z^N\sqrt{\eta}\right|_{HS}
d|z|\\
 =
\begin{cases}
  O(   e^{-(2N-3 )T} ), &\text{ for }t\geq e^{-2T}\\
  O(   t^{\frac{N}{2}-1}e^{3T}), &\text{ for }t< e^{-2T}.
  \end{cases}
\end{multline}  
When we  consider $D_{A_R}e^{-tD_{A_R}^2}-D_{A_R}K_t^N$ in place of $ e^{-tD_{A_R}^2}- K_t^N$, the additional $D_{A_R}$ increases the allowed homogeneity of the polynomials $p_a$ by $1$. The proof then proceeds as before. 
\end{proof}

Since $e^1$ is the unit outward normal to $M_y$, we are left to compute 
\begin{align}\label{step1}
&{-}\lim_{y\to \infty}\frac{1}{2}\int\limits_{e^{-(2+\delta)y}}^\infty\int\limits_{\partial M_y}\tr c^1\gamma^5D_{A_R}e^{-tD_{A_R}^2}(x,x){d\nu_x  dt}=\nonumber\\
& \lim_{y\to \infty}\int\limits_{e^{-(2+\delta)y}}^\infty\int\limits_{\partial M_y}\frac{1}{4\pi i}\int\limits_{C_t}\int \sum_{j=0}^Ne^{-tz}\tr c^1\gamma^5(\hat{D}+c(\delta u))\sigma_z^{-j-1}q_j\psi|_{x=x'}\,du\,dz\, d\nu_x dt. 
\end{align}
The following section is devoted to simplifying  \eqref{step1}.
 
\subsection{Reduction}
The expression \eqref{step1} contains  an enormous number of summands. In this section we show that only those summands corresponding to $q_0$ and $q_1$ contribute to the index formula. 
Write 
\begin{align}\label{comL}L\sigma_z^{-1} = \sigma_z^{-1}L - \sigma_z^{-2}[L,c(\delta u)^2] + \sigma_z^{-3}[[L,c(\delta u)^2],c(\delta u)^2].
\end{align}
The last term in expression \eqref{comL} is zero order and commutes with $\sigma_z^{-1}$. Hence, we can write 
\begin{align}\label{pabc}&\sigma_z^{-j-1}q_j\psi  = (-1)^j\sigma_z^{-1} (L \sigma_z^{-1})^j\psi\nonumber\\
&=   (-1)^j\sum_{a+b+c = j}\sigma_z^{-1-a-2b-3c}p_{a,b,c}(L,[L,c(\delta u)^2],[[L,c(\delta u)^2],c(\delta u)^2])\psi,
\end{align}
where each $p_{a,b,c}(X_1,X_2,X_3)$ is a polynomial homogeneous of degree $a,b,c$ respectively in the noncommutative variables $X_1,X_2,$ and $X_3.$ Then \\$p_{a,b,c}(L,[L,c(\delta u)^2],[[L,c(\delta u)^2],c(\delta u)^2])$ defines a partial differential operator of order $\leq 2a+b$. Inserting expression \eqref{pabc} into \eqref{step1} and performing the contour integration yields
\begin{align}\label{step11}
&\frac{1}{4\pi i}\int\limits_{C_t}\int e^{-tz}\tr c^1\gamma^5(\hat{D}+c(\delta u))\sigma_z^{-j-1}q_j\psi|_{x=x'}\,du\,dz\nonumber\\
&= \frac{1}{2}\int \tr c^1\gamma^5(\hat{D}+c(\delta u))e^{-tc(\delta u)^2}\sum_{a+b+c = j}\frac{(-1)^{j+1}p_{a,b,c}t^{a+2b+3c}}{(a+2b+3c)!}\psi|_{x=x'}\,du, 
\end{align}
where $p_{a,b,c} = p_{a,b,c}(L,[L,c(\delta u)^2],[[L,c(\delta u)^2],c(\delta u)^2])$. Observe that as a polynomial in $u$, $p_{a,b,c}$ has degree at most $a+3b+4c$, and this highest degree is obtained only for the summand $p_{a,b,c}(L_1,[L_1,c(\delta u)^2],[[L,c(\delta u)^2],c(\delta u)^2])$, where $L_1$ is defined in \eqref{L1def}.  

\begin{lemma}\label{elembound}
For $\chi\geq 0 $, $p>-\frac{1}{2}$ and $a\not\in\mathbb{Z}$, $$\sum_{ \kappa\in \IZ}\int_\chi^\infty t^{{p}}e^{-te^{2y}V^2{( \kappa+a)^2}}dt\leq C_pe^{-2(p+1)y},$$ for some $C_p>0$.   For $0<\delta<1$ and  $b$ odd 
$$\sum_{k\in \IZ}\int_{e^{-(2+\delta)y}}^\infty t^{q}\left[e^{y}V( \kappa+a)\right]^be^{-te^{2y}V^2 ( \kappa+a)^2}dt = O\left(e^{-(2q+2-b) y}\right),$$
 For $b$ even we have 
\begin{multline*}
\sum_{ \kappa\in \IZ}\int_{e^{-(2+\delta)y}}^\infty t^{q}\left[e^{y}V ( \kappa+a)\right]^be^{-te^{2y}V^2( \kappa+a)^2}dt\\
 = O\left(e^{-(2q+2-b) y}\right) +   O\left( e^{\frac{\delta}{2} y-(2q+2-b)(1+\frac{\delta}{2}) y}\right).
\end{multline*}
\end{lemma}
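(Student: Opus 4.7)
The plan is to reduce each inequality to a single substitution and then invoke Poisson summation in $\sk$, with the zero Fourier mode accounting for the dichotomy between $b$ odd and $b$ even. For the first bound I substitute $s=te^{2y}V^2(\sk+a)^2/4$ in each summand, which dominates it by $\bigl(\tfrac{4}{e^{2y}V^2(\sk+a)^2}\bigr)^{p+1}\Gamma(p+1)$. The series $\sum_{\sk\in\IZ}|\sk+a|^{-2(p+1)}$ converges because $p>-\tfrac12$ and $a\notin\IZ$ guarantees $\inf_\sk|\sk+a|>0$; combined with $V$ being bounded below at infinity by $l>0$, this yields the claimed $e^{-2(p+1)y}$ decay.

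For the two remaining bounds I first interchange the sum and integral to rewrite the left-hand side as
\begin{equation*}
\Bigl(\tfrac{e^yV}{2}\Bigr)^b\int_{e^{-(2+\delta)y}}^\infty t^q\,g_b\bigl(te^{2y}V^2/4\bigr)\,dt,\qquad g_b(\beta):=\sum_{\sk\in\IZ}(\sk+a)^b e^{-\beta(\sk+a)^2},
\end{equation*}
and then substitute $\beta=te^{2y}V^2/4$ to reduce everything to controlling $e^{-(2q+2-b)y}\int_{\beta_0}^\infty \beta^q g_b(\beta)\,d\beta$ with $\beta_0=e^{-\delta y}V^2/4$. The entire $y$-dependence (apart from $\beta_0$) now sits in the explicit exponential prefactor. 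To handle $g_b$ I apply Poisson summation, $g_b(\beta)=\sum_{n\in\IZ}e^{2\pi ina}\hat h_\beta(n)$ with $h_\beta(u)=u^b e^{-\beta u^2}$, for which classical Gaussian Fourier computations give $\hat h_\beta(0)=0$ when $b$ is odd, $\hat h_\beta(0)=c_b\beta^{-(b+1)/2}$ when $b$ is even, and $|\hat h_\beta(n)|\lesssim\beta^{-(b+1)/2}(1+|n|/\sqrt\beta)^b e^{-\pi^2 n^2/\beta}$ for $n\neq 0$.

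Splitting $\int_{\beta_0}^\infty=\int_{\beta_0}^1+\int_1^\infty$, the tail $\int_1^\infty$ is bounded uniformly in $\beta_0$ by the exponential decay of $g_b$ at infinity, and the nonzero-mode contribution to $\int_{\beta_0}^1$ is bounded by $\int_0^1 C\beta^{q-(b+1)/2}e^{-\pi^2/\beta}\,d\beta<\infty$ since the super-exponential factor dominates any negative power. For $b$ odd the zero mode vanishes, so $\int_{\beta_0}^\infty\beta^q g_b(\beta)\,d\beta=O(1)$ and the whole expression becomes $O(e^{-(2q+2-b)y})$. For $b$ even the additional zero-mode integral $c_b\int_{\beta_0}^1\beta^{q-(b+1)/2}\,d\beta$ is uniformly bounded when $b\le 2q+1$ (absorbed into the main term) and equals $C'\beta_0^{q+1-(b+1)/2}=C'e^{\delta y(b-2q-1)/2}$ when $b>2q+1$; multiplying by the prefactor and applying the algebraic identity $\delta(b-2q-1)/2=\delta/2-(2q+2-b)\delta/2$ produces the advertised $O(e^{\delta y/2-(2q+2-b)(1+\delta/2)y})$ correction. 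The main obstacle is pure bookkeeping: keeping $V$-dependent constants uniform in $y$ (harmless since $V\to l$) and handling the borderline case $b=2q+1$ where a logarithmic factor appears but remains compatible with the stated $O$-bound; the conceptual content is just the Poisson dichotomy between odd and even $b$.
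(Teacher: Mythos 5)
Your proposal is correct and takes essentially the same route as the paper: after rescaling, your split point $\beta=1$ is the paper's $t=e^{-2y}$, the large-$\beta$ piece is handled by direct convergence of the sum (using $a\notin\mathbb{Z}$), and Poisson summation in $\sk$ on the small-$\beta$ piece yields the odd/even dichotomy through the zero Fourier mode, with the nonzero modes suppressed by $e^{-\pi^2 n^2/\beta}$ and the zero mode producing exactly the extra $e^{\frac{\delta}{2}y-(2q+2-b)(1+\frac{\delta}{2})y}$ term via the same exponent identity. The one caveat — the borderline case $b=2q+1$ (with $b$ even), where a factor of $y$ appears and the stated bound holds only up to that logarithmic loss — is glossed over identically in the paper's own proof, so your treatment is on par with it.
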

\begin{proof}
We break the integration interval into two pieces $[e^{-(2+\delta)y},e^{-2y}]$ and $[e^{-2y},\infty).$ For the latter interval
\begin{multline*}
\sum_{ \kappa\in \IZ}\int_{e^{- 2y}}^\infty t^{q}\left[e^{y}V{( \kappa+a)}\right]^be^{-te^{2y}V^2{( \kappa+a)^2}}dt\\ 
= \sum_{ \kappa\in \IZ}e^{- (2+2q-b)y}\int_{1}^\infty t^{q}\left[ V{( \kappa+a)}\right]^be^{-t V^2{( \kappa+a)^2}}dt 
= O\left(e^{- (2+2q-b)y}\right).
\end{multline*}
To  estimate  the remaining integral, we first transform the sum via the Poisson summation formula:
\begin{multline*}
\sum_{ \kappa\in\IZ}[e^{y} ( \kappa+a)]^be^{-te^{2y}V^2{( \kappa+a)^2}}\\
 = {\sqrt{\pi}}e^{(b-1)y}V^{-1}t^{-1/2}\sum_{p\in \IZ}e^{2\pi ipa}\left(\frac{i}{2\pi}\frac{\p}{\p p}\right)^be^{- t^{-1}{\pi^2}  e^{-2y}V^{-2}p^2}.
\end{multline*}
In such formulas, we differentiate before evaluating at $p$ integer.
Hence,   
\begin{align*}
&\sum_{ \kappa\in \IZ}\int_{e^{-(2+\delta)y}}^{e^{-2y}} t^{q}[e^{y} ( \kappa+a)]^be^{-te^{2y}V^2{( \kappa+a)^2}}dt\\
 &= \sum_{p\in \IZ}e^{2\pi ipa}\left(\frac{i}{2\pi}\frac{\p}{\p p}\right)^b\int_{e^{-(2+\delta)y}}^{e^{-2y}} t^{q-\frac{1}{2}}{\sqrt{\pi}}e^{(b-1)y}V^{ -1}e^{- t^{-1}{\pi^2 } e^{-2y}V^{-2}p^2  }dt\\
&= \sum_{p\in \IZ}e^{2\pi ipa}\left(\frac{i}{2\pi}\frac{\p}{\p p}\right)^b\int_1^{e^{ \delta y}}  e^{-(2q+2-b) y}t^{-q-\frac{3}{2}}{\sqrt{\pi}}V^{ -1}e^{- t {\pi^2}V^{-2}p^2  }dt. 
\end{align*}
For $b$ odd, $p=0$ does not contribute, and this yields
$$\sum_{ \kappa\in \IZ}\int_{e^{-(2+\delta)y}}^{e^{-2y}} t^{q}[e^{y} ( \kappa+a)]^be^{-te^{2y}V^2{( \kappa+a)^2}}dt 
= O\left(e^{-(2q+2-b) y}\right).$$
For $b$ even, $p=0$ contributes and we have 
\begin{multline}
\sum_{ \kappa\in \IZ}\int_{e^{-(2+\delta)y}}^{e^{-2y}} t^{q}[e^{y} ( \kappa+a)]^be^{-te^{2y}V^2{( \kappa+a)^2}}dt\\ 
 = O\left(e^{-(2q+2-b) y}\right)
+   O\left( e^{\frac{\delta}{2} y-(2q+2-b)(1+\frac{\delta}{2}) y}\right) .
\end{multline}
\end{proof}

We apply this lemma to  eliminate most terms in the expression \eqref{step1}. 
\begin{lemma}\label{vange2}
$$ \lim_{y\to \infty}\int\limits_{e^{-(2+\delta)y}}^\infty\int\limits_{\partial M_y}\frac{1}{4\pi i}\int\limits_{C_t}\int \sum_{j>2}^Ne^{-tz}\tr c^1\gamma^5(\hat{D}+c(\delta u))\sigma_z^{-j-1}q_j\psi|_{x=x'}\,du\,dz\, d\nu_x dt=0. 
$$
\end{lemma}
\begin{proof}
Make the change of variables $u= (\kappa,v)\to (\kappa, t^{-1/2}v)$. After this change of variables, $p_{a,b,c}$ as a polynomial in $(e^y\kappa,t^{-\frac{1}{2}})$ has degree at most $a+3b+4c$, with this maximal degree only obtained in the summand\\ $p_{a,b,c}(L_1,[L_1,c(\delta u)^2],[[L,c(\delta u)^2],c(\delta u)^2])$, and $c(\delta u)$ becomes a polynomial of degree at most one in these variables. Hence substituting expression \eqref{step11} into \eqref{step1}, making the change of variables and integrating with respect to $v$ leaves us with integrals given by the product of the trace of  bounded endomorphisms and  integrals of the following form.
\begin{align}\label{jpc} &\int_{e^{-(2+\delta)y}}^\infty t^{a+2b+3c-\frac{3}{2}-m-\frac{\epsilon}{2}}\left[e^{y}V ( \kappa-\Lambda)\right]^{a+3b+4c-2m}e^{-te^{2y}V^2( \kappa-\Lambda)^2}dt  \nonumber\\
&= O\left(e^{-(j+c -1 - \epsilon ) y}\right)
+   O\left( e^{\frac{\delta}{2} y-(j+c-1 - \epsilon )(1+\frac{\delta}{2}) y}\right),
\end{align}   
 with $a+b+c=j$, and   $\epsilon = 1$ for the $c(\delta u)$ summand and $0$ for the $\hat D$ summand in the $\tr\gamma^5c(\nu)(\hat D+ c(\delta(u))$ term. Moreover, the $O\left( e^{\frac{\delta}{2} y-(j+c-1 - \epsilon )(1+\frac{\delta}{2}) y}\right)$ term only appears when $a+b$ is even. In particular, the  terms in \eqref{step1} (as expanded in \eqref{step11}) with $j+c>1+\epsilon$ are exponentially decreasing. 
\end{proof} 
To obtain further vanishing we require an algebraic lemma. 
\begin{lemma}[\cite{roe},\cite{BGV}]\label{cliffvan}  If $\alpha$ is a  $p-$form, with $p<4$, then $\tr \gamma^5c(\alpha) = 0$. 
\end{lemma}
We need  more information about the  connection components $\gamma_{jl}^m:= \langle \nabla_{e_j}e_l,e_m\rangle$ in order to exploit the preceding lemma. For the convenience of the reader, we record all these terms to the requisite accuracy. We have 
\begin{align}\label{gammam}
\gamma^1_{44} &= 1-\frac{ k}{4\ell}e^{-y}+O(e^{-2y}),&\gamma^2_{33} &= -\cot(\phi) +O(e^{-2y}) =O(e^{-\frac{3y}{4}}), \nonumber\\
\gamma_{23}^4 &= \frac{ k}{4\ell}e^{-y}+O(e^{-2y}),&
\gamma_{32}^4 &= -\frac{ k}{4\ell}e^{-y}+O(e^{-2y}),\\
\gamma_{42}^3 &= -\frac{ k}{4\ell}e^{-y}+O(e^{-2y})
 .\nonumber 
\end{align}
Here $k$ is the number of centers in TN$_k$. 
All other terms not related to the above by the relation $\gamma_{ij}^m = -\gamma_{im}^j$ are $O(e^{-2y})$. 
\begin{lemma}
\begin{align}\label{q0cont}& \lim_{y\to \infty}\int\limits_{e^{-(2+\delta)y}}^\infty\int\limits_{\partial M_y}\frac{1}{4\pi i}\int\limits_{C_t} e^{-tz}\tr c^1\gamma^5(\hat{D}+c(\delta u))\sigma_z^{-1}q_0\psi|_{x=x'}\,du\,dz\, d\nu_x dt\nonumber\\
&=   \lim_{y\to \infty}\int\limits_{e^{-(2+\delta)y}}^\infty\int\limits_{\partial M_y}  e^{-tc(\delta u)^2}\tr 
\gamma^5\frac{ke^{-y}}{16\ell}c(e^1\wedge e^4\wedge dVol_{S^2})   \,du \, d\nu_x dt. 
\end{align}
\end{lemma}
\begin{proof}By Lemma \ref{cliffvan}, $\tr \gamma^5 c^1c(\delta u)= 0$, since $c^1c(\delta u)$ contains no term which can be written as Clifford multiplication by a $4-$form. Writing 
\begin{align}\label{twistD}\hat D = c^m(e_m+A_m^{ab}-\gamma_{ii}^m)+ic^4\Lambda +\frac{ke^{-y}}{8\ell}c(e^4\wedge dVol_{S^2}) + O(e^{-2y}),
\end{align}
we see that the only term Clifford multiplication by a $4$-form $c^1\hat D$ contributes is $\frac{ke^{-y}}{8\ell}c(e^1\wedge e^4\wedge dVol_{S^2}) + O(e^{-2y})$, and the result follows. 
\end{proof}

\begin{lemma}
$$ \lim_{y\to \infty}\int\limits_{e^{-(2+\delta)y}}^\infty\int\limits_{\partial M_y}\frac{1}{4\pi i}\int\limits_{C_t}\int \sum_{j=2}^Ne^{-tz}\tr c^1\gamma^5(\hat{D}+c(\delta u))\sigma_z^{-j-1}q_j\psi|_{x=x'}\,du\,dz\, d\nu_x dt=0. 
$$
\end{lemma}
\begin{proof} In the proof of Lemma \ref{vange2}, we showed that terms with $j+c>1+\epsilon$ (in the notation of that proof) are exponentially decreasing. Hence for $j=2$, the only possible nonvanishing terms are $c=0$ and $\epsilon = 1$ arising in the summand
$-\tr \gamma^5 c^1c(\delta u)p_{a,b,0}(L_1,[L_1,c(\delta u)^2],1)$. By Lemma \ref{cliffvan}, $L_1$ must provide two additional Clifford terms for the trace to be nonzero. These terms can only come from the spin connection. We write 
$\nabla_{e_j} = e_j + A_j +\frac{1}{4}\gamma_{jk}^mc(e^k\wedge e^m)$. In our frame and neighborhood, the only $\gamma_{jk}^m$ which are not exponentially decreasing are $\gamma_{44}^1 = -\gamma_{41}^4 = 1+O(e^{-y}).$ Hence the only nonexponentially decreasing Clifford term contributed by $L_1$ is $c(e^1\wedge e^4)$. Both $c^1c(\delta u)c(e^1\wedge e^4)$ and $c^1c(\delta u)c(e^1\wedge e^4)^2$ can be written as the sum of a scalar and Clifford multiplication by a 2-form. Hence by Lemma \ref{cliffvan}, $-\tr \gamma^5 c^1c(\delta u)p_{a,b,0}(L_1,[L_1,c(\delta u)^2],1)$ is exponentially decreasing, and the lemma follows. 
\end{proof}

\begin{lemma}
$$ \lim_{y\to \infty}\int\limits_{e^{-(2+\delta)y}}^\infty\int\limits_{\partial M_y}\frac{1}{4\pi i}\int\limits_{C_t}\int e^{-tz}\tr c^1\gamma^5 \hat{D} \sigma_z^{-2}q_1\psi|_{x=x'}\,du\,dz\, d\nu_x dt=0. 
$$
\end{lemma}
\begin{proof} 
 By \eqref{jpc}, the contribution of  $\tr c^1\gamma^5 \hat{D}\sigma_z^{-2}q_1\psi $ to \eqref{step11}  is the trace of a bounded endomorphism times $O(e^{- cy})$. Hence  terms with $c>0$ are exponentially decreasing. Moreover all terms other than the contribution from $\tr c^1\gamma^5 \hat{D}\sigma_z^{-2}p_{a,b,0}(L_1,[L_1,c(\delta u)^2],1)$, with $a+b=1$ are exponentially decreasing.  The only nonexponentially decreasing Clifford term contributed by $L_1$ is $c(e^1\wedge e^4)$. Hence $\hat D$ must contribute  Clifford multiplication by a 3-form in order to have nonzero trace. By \eqref{twistD}, the only 3-form $\hat D$ contributes is exponentially decreasing, and the result follows. 
\end{proof}
For the contribution of the $q_1$ term to \eqref{step1}, using \eqref{comL}, we are left with 
\begin{align}\label{beforediff}&J_1:=   \lim_{y\to \infty}\int\limits_{e^{-(2+\delta)y}}^\infty\int\limits_{\partial M_y}\frac{1}{4\pi i}\int\limits_{C_t}\int  e^{-tz}\tr c^1\gamma^5 c(\delta u)  (\sigma_z^{-2}L - \sigma_z^{-3}[L,c(\delta u)^2])\psi|_{x=x'}\,du\,dz\, d\nu_x dt\nonumber\\
& =  \lim_{y\to \infty}\int\limits_{e^{-(2+\delta)y}}^\infty\int\limits_{\partial M_y}\int
 \tr c^1\gamma^5 c(\delta u)e^{-tc(\delta u)^2}  ( \frac{t}{2}L + \frac{t^2}{4}[L,c(\delta u)^2])\psi|_{x=x'}\,du \, d\nu_x dt.
\end{align}
\begin{lemma}
\begin{align}\label{afterdiff0}  &J_1= \lim_{y\to \infty}\int\limits_{e^{-(2+\delta)y}}^\infty\int\limits_{\partial M_y}\int
 \tr\gamma^5 c^1  e^{-tc(\delta u)^2}   {\frac{k{e^{-y}}}{8\ell}}c^2c^3c^4\,du \, d\nu_x dt\nonumber\\
&+\lim_{y\to \infty}\int\limits_{e^{-(2+\delta)y}}^\infty\int\limits_{\partial M_y}\int
 \tr c^1\gamma^5   c^4e^yV{i( \kappa- \Lambda)}e^{-tc(\delta u)^2}   \frac{t}{2}c^2c^3(F_A(e_2,e_3) {+}\frac{ik}{4}(\kappa-\Lambda))\,du \, d\nu_x dt.
\end{align}
\end{lemma}
\begin{proof}
From the definition of $c(\delta u)$ it follows that
\begin{align}\label{diffid}tc(\delta u)e^{-tc(\delta u)^2} = [-\frac{i}{4\pi}\Big(c^1\partial_{v_1}+ c^2\partial_{v_2}+\sin\phi\, c^3 \partial_{v_3}\Big) +tc^4e^yV{i( \kappa- \Lambda)}]e^{-tc(\delta u)^2}.
\end{align}

 Inserting \eqref{diffid} into \eqref{beforediff} and integrating by parts in $v$ gives
\begin{align}\label{afterdiff}  &J_1= \lim_{y\to \infty}\int\limits_{e^{-(2+\delta)y}}^\infty\int\limits_{\partial M_y}\int
 \tr \gamma^5 c^1  e^{-tc(\delta u)^2}   \frac{1}{2}(  \sum_{j=1}^3c^j(\frac{1}{4}\gamma_{jl}^mc^lc^m+A(e_j) -  \omega(\bar e_j)i\Lambda)
\,du \, d\nu_x dt\nonumber\\
&+\lim_{y\to \infty}\int\limits_{e^{-(2+\delta)y}}^\infty\int\limits_{\partial M_y}\int
 \tr \gamma^5 c^1  e^{-tc(\delta u)^2}  (\frac{\cos(\phi)c^2}{2\sin(\phi)}+ \frac{t}{4}[(   c^1\nabla_{e_1} +   c^2  \nabla_{e_2} 
+     c^3  \nabla_{e_3} ),c(\delta u)^2])\psi|_{x=x'}\,du \, d\nu_x dt\nonumber\\
&+\lim_{y\to \infty}\int\limits_{e^{-(2+\delta)y}}^\infty\int\limits_{\partial M_y}\int
 \tr c^1\gamma^5   c^4e^yV{i( \kappa- \Lambda)}e^{-tc(\delta u)^2} ( \frac{t}{2}L + \frac{t^2}{4}[L,c(\delta u)^2])\psi|_{x=x'}\,du \, d\nu_x dt.
\end{align}
By lemma \ref{cliffvan}, 
$$\tr \gamma^5 c^1  e^{-tc(\delta u)^2} (\frac{\cos(\phi)c^2}{2\sin(\phi)}+   \frac{t}{4}[(   c^1\nabla_{e_1} +   c^2  \nabla_{e_2} 
+     c^3  \nabla_{e_3} ),c(\delta u)^2])=0,$$ 
and
$$\tr \gamma^5 c^1  e^{-tc(\delta u)^2}   \frac{1}{2}(  \sum_{j=1}^3c^j(A(e_j) -  \omega(\bar e_j)i\Lambda)=0.$$

By \eqref{gammam}, we have 
$$ \sum_{j=1}^3c^j \frac{1}{2}\gamma_{jl}^mc^lc^m =   (\frac{k{e^{-y}}}{2\ell}+O(e^{-2y}))c^2c^3c^4
-  \gamma_{33}^m c^m.$$
In order to simplify the last line of \eqref{afterdiff}, we remove all the terms in $L$ which are odd in $v_1,$ $2\pi  v_2-\omega(\bar e_2) (\kappa-\Lambda)$, $ 2\pi  v_3-\omega(\p_\theta) (\kappa-\Lambda)$, or $\phi$, as they integrate to zero. Among the remaining terms, we then identify all the terms in $L$ which have a $c^2c^3$ factor and no $c^1$ or $c^4$ factor. By Lemma \ref{cliffvan} these are the only terms which do not trace to zero in 
$\tr c^1\gamma^5   c^4  ( \frac{t}{2}L + \frac{t^2}{4}[L,c(\delta u)^2])\psi|_{x=x'}$. Using \eqref{gammam} we compute that the remaining $c^2c^3$ term in $L\psi_{x=x'}$ is
$$ c^2c^3\left(F_A(e_2,e_3) {+}\frac{ik}{4}(\kappa-\Lambda)+O(e^{-y})\right), $$
 which vanishes in the commutator with $c(\delta u)^2$. 
The result follows. 
\end{proof}

\subsection{The Asymptotic Contribution}
After the reductions obtained in the previous section, the computation of the index reduces to the computation of the sum and integral of \eqref{q0cont} and \eqref{afterdiff0}. The elements $\frac{\lambda_a}{\ell}$ of the diagonal matrix $ \Lambda$ only enter our computations as $\frac{\lambda_a}{\ell}+\IZ$. We denote by $\{{\lambda_a}/{\ell}\}$ the unique representative of $\frac{\lambda_a}{\ell}+\IZ$ in the interval $[0,1)$, and we let $\{ \Lambda\}$ denote the diagonal matrix with entries $\{{\lambda_a}/{\ell}\}.$ We denote $F_A(e_2,e_3)$ by $F_{23}$, and let $F_{23}^0$ denote the zeroth Fourier coefficient of $F_{23}$ in the given frame. 
\begin{theo}
The asymptotic contribution to the index equals
\begin{multline*}
\lim_{y\to \infty}\frac{1}{2}\int_{e^{-(2+\delta)y}}^\infty\int_{\partial M_y}\tr\, c(\nu)({-}\gamma^5)De^{-tD^2}(x,x)\\
=\frac{k}{2}\tr_{_{\scriptscriptstyle \mathcal{E}}}\left(\{{ \Lambda}\}^2-\{{ \Lambda}\}+\frac{1}{6}\right)+\frac{1}{2}\int_{S^2_\infty} \tr_{_{\scriptscriptstyle \mathcal{E}}}\left(\frac{\{{ \Lambda}\}}{\pi}iF_{23}^0-\frac{iF_{23}^0}{2\pi}\right)d\mathrm{Vol}_{S^2}.
\end{multline*}
\end{theo}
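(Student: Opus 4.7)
The plan is to evaluate the three summands of the cleaned asymptotic integrand (\ref{cleanedasymp}) in sequence: first perform the $\lambda$ contour integral, then the Gaussian $v$-integrals, then the $t$-integral combined with the $\sk$-sum via Poisson summation, and finally recognize the result as a Fourier series of a Bernoulli polynomial.

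\emph{Step 1 (Contour integration).} Using the elementary identities
\[
\frac{-1}{2\pi i}\int_C e^{-t\lambda}\sigma^{-1}d\lambda = e^{-tc(\delta u)^2},\qquad
\frac{-1}{2\pi i}\int_C e^{-t\lambda}\sigma^{-2}d\lambda = t\,e^{-tc(\delta u)^2},
\]
the three terms $T_1,T_2,T_3$ become explicit Gaussians in $v$ multiplied by powers of $t$ and of $(\sk-\Lambda)$. Using the formula (\ref{cdu2}) for $c(\delta u)^2$, the shift of variables already introduced before Lemma~\ref{elembound} diagonalizes the Gaussian, and the $dv_1\,dv_2\,dv_3$ integration produces a factor $\sin\phi/(4\pi t)^{3/2}$ times $\exp(-t e^{2y}V^2(\sk-\Lambda)^2/4)$.

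\emph{Step 2 ($T_1+T_2$: the scalar part).} Substitute $s=te^{2y}V^2/4$. The combined integrand becomes, up to the $\tr_E$ and prefactors,
\[
\frac{k\sin\phi}{(4\pi)^{3/2}}\Bigl[\tfrac{1}{4}\,s^{-3/2}S(s)+\tfrac{1}{2}\,s^{-1/2}S'(s)\Bigr],\qquad
S(s)=\sum_{\sk\in\IZ}e^{-s(\sk-\Lambda)^2}.
\]
Applying Poisson summation $S(s)=\sqrt{\pi/s}\,\Theta(s)$ with $\Theta(s)=\sum_{n}e^{2\pi in\Lambda}e^{-\pi^2n^2/s}$, the leading $s^{-2}\Theta(s)$ contributions from the two bracketed pieces cancel, leaving the manifestly integrable $\tfrac{1}{2}\sqrt{\pi}\,s^{-1}\Theta'(s)$. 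Integrating term-by-term and using the substitution $u=\pi^2n^2/s$ gives
\[
\int_0^{\infty}\!s^{-1}\Theta'(s)\,ds=\sum_{n\neq 0}\frac{e^{2\pi in\Lambda}}{\pi^2n^2}=2\bigl(\{\Lambda\}^2-\{\Lambda\}+\tfrac{1}{6}\bigr),
\]
by the classical Fourier series of $B_2(\{x\})=\{x\}^2-\{x\}+1/6$, which is valid under Assumption~\ref{assume}.

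\emph{Step 3 ($T_3$: the curvature part).} Writing $\sum_{\sk}(\sk-\Lambda)e^{-s(\sk-\Lambda)^2}=(2s)^{-1}\partial_\Lambda S(s)$ and applying Poisson summation yields, after the $s$-integration, $i\pi^{-1/2}\sum_{n\neq 0}e^{2\pi in\Lambda}/n=2\pi^{1/2}(\{\Lambda\}-\tfrac{1}{2})$ via the Fourier series of $B_1(\{x\})=\{x\}-1/2$. Reassembling the prefactor $4\cdot\tfrac{1}{2}$, the factor $\sin\phi$, and the integration over $\partial M_y$ (whose induced volume form $\sin\phi\,e^{-y}V^{-1}d\phi\,d\theta\,d\tau$ combines the $\sin\phi$ in the integrand with $d\phi\,d\theta$ into $d\mathrm{Vol}_{S^2}$; the circle fiber $d\tau$ contributes the normalization that cancels the $(4\pi)^{-1}$ in the Fourier inversion convention for $\tau$), one obtains the claimed formula. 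The main obstacle is precisely the cancellation in Step~2: individually, each of $T_1$ and $T_2$ produces a divergent contribution as $s_0=e^{-\delta y}V^2/4\to 0^+$, and only after combining them and recognizing the total as a full derivative (equivalently, after Poisson summation collapses the zero mode) is the limit $y\to\infty$ finite. All subsequent arithmetic is bookkeeping of Jacobians and normalizations.
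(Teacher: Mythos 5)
Your proposal is correct and follows essentially the same route as the paper: contour integration of the resolvent, the Gaussian $v$-integrals producing the $\sin\phi\,(4\pi t)^{-3/2}$ factor, Poisson summation in $\sk$, the $t$-integration, and the Fourier expansions of $B_1$ and $B_2$, with the decisive step being exactly the paper's cancellation of the divergent zero mode between the two scalar summands. Your packaging of that cancellation as the combination being proportional to $s^{-1}\Theta'(s)$ is just a tidier rewriting of the paper's observation that the $p$-independent terms drop out after Poisson summation, and the remaining normalization issues you defer are the same Jacobian bookkeeping the paper also treats briefly.
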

\begin{proof}
We must evaluate the sums and integrals corresponding to each one of the three summands in \eqref{q0cont} and \eqref{afterdiff0}. Start with 
\begin{align*}
&{\int\limits_{e^{-(2+\delta)y}}^\infty\int\limits_{\partial M_y}\int
 \tr c^1\gamma^5   c^4e^yV{i( \kappa- \Lambda)}e^{-tc(\delta u)^2}   \frac{t}{2}c^2c^3F_{23} \,du \, d\nu_x dt.}\\
&={\sum_ \kappa 2} \int\limits_{e^{-(2+\delta)y}}^\infty\int_{\mathbb{R}^3}  \tr_{_{\scriptscriptstyle \mathcal{E}}} e^{-t {{e^{2y}V^2}}( \kappa- \Lambda)^2}e^{-t 4\pi^2(v_1^2+v_2^2+\frac{v_3^2}{\sin^2\phi} )}    e^yV {{i}( \kappa- \Lambda)}   tF_{23} dv dt \\
&={\sum_ \kappa 2 }\int\limits_{e^{- \delta y}V^2}^\infty  \tr_{_{\scriptscriptstyle \mathcal{E}}} \sin \phi\, e^{- {{t}( \kappa- \Lambda)^2}}(4\pi )^{-3/2}   {{i}( \kappa- \Lambda) }  F_{23} t^{-1/2} dt .
\end{align*}  
The Poisson summation formula implies 
$$\sum_{ \kappa\in \mathbb{Z}}( \kappa+a)e^{-4\pi^2s( \kappa+a)^2} = \sum_{p=1}^\infty 2p\sin(2\pi pa)(4\pi s)^{-3/2}e^{-p^2/4s}.$$ 
This transforms the last integral to
\begin{multline*}
{{-\frac{1}{4}}}\tr_{_{\scriptscriptstyle {\mathcal{E}}}}\sum_{p=1}^\infty2p\sin(2\pi p \Lambda)\sin \phi \int_{e^{- \delta y}V^2}^\infty  e^{- \frac{ {\pi^2p^2} }{t} }   {{i}}   F_{23} t^{-2} dt\\
= {{-\tr_{_{\scriptscriptstyle \mathcal{E}}}\,  \frac{i}{4} } } F_{23} \sum_{p=1}^\infty2p\sin(2\pi p \Lambda)\sin \phi \int_0^{e^{ \delta y}V^{-2}} 
  e^{{- t  \pi^2p^2}  }   dt.
 \end{multline*}
In the limit as $y\to\infty$ this reduces to 
\begin{align}\label{bndrya}
{-\frac{1}{2}\tr_{_{\scriptscriptstyle \mathcal{E}}}}\sum_{p=1}^\infty \frac{\sin(2\pi p \Lambda)}{ \pi^2p }i F_{23}^0\sin \phi.
\end{align}  Recalling the Fourier expansion of the Bernoulli polynomials \cite[Sec.1.13]{BatemanProject} for $x\in(0,1):$ 
\begin{align}\label{Bernoulli}
{\frac{1}{n!}B_n(x)=-\sum_{p\neq 0}\frac{e^{2\pi i p x}}{(2\pi i p)^n},}
\end{align}
we have 
$\frac{1}{2}-\{a\}= \sum_{p=1}^\infty \frac{\sin(2\pi pa)}{\pi p },$ where $a\in\mathbb{R}\setminus\mathbb{Z},$ and we denote by $\{a\}\in [0,1)$  the unique representative of $a+\IZ$ in that interval.   The sum \eqref{bndrya} then reduces, under the  Assumption~\ref{assume}, to
\begin{align}\label{sinphi}
{\frac{1}{2}}\tr_{\mathcal{E}}\left(\frac{\{{ \Lambda}\}}{\pi}-\frac{I}{2\pi}\right) iF_{23}^0 \sin \phi. \end{align}

We recall that in our parametrix construction, we had replaced $d\mathrm{Vol}_{S^2}$ by $ d\phi\wedge d\theta$. The factor of $\sin \phi$ in (\ref{sinphi}) restores the usual volume form, and the contribution of  the final summand of  (\ref{afterdiff0}) to the index is 
 $${\frac{1}{2}} \int_{S^{2}_\infty}\tr_{\mathcal{E}}\left(\frac{\{{ \Lambda}\} iF_{23}^0}{\pi}-\frac{iF_{23}^0}{2\pi}\right) d\nu.
$$

Now we consider the remaining terms. 
\begin{align}\label{remainders}
&{\int\limits_{e^{-(2+\delta)y}}^\infty\int\limits_{\partial M_y}\int
 \tr_\mathcal{E} e^{-tc(\delta u)^2}\bigg(     {\frac{k{e^{-y}}}{{4}\ell}}
-
e^yV  \frac{ikt}{2}(\kappa-\Lambda)^2\bigg)\,du \, d\nu_x dt.}
\end{align}
The Poisson  summation formula implies
\begin{align*}
\sum_{ \kappa\in \mathbb{Z}}e^{-4\pi^2s( \kappa+a)^2} &= \sum_{p\in \mathbb{Z}}(4\pi s)^{-1/2}e^{-p^2/4s}e^{2\pi ipa},\text{ and}\\
\sum_{ \kappa\in \mathbb{Z}}e^{-4\pi^2s( \kappa+a)^2}4\pi^2s( \kappa+a)^2 &= \sum_{p\in \mathbb{Z}}(4\pi s)^{-1/2}e^{-p^2/4s}e^{2\pi ipa}\left(\frac{1}{2}-\frac{p^2}{4s}\right).
\end{align*}
Computing the integrals and applying Poisson summation formulas as before simplifies \eqref{remainders} to
\begin{multline*}
\frac{e^{-2\pi ip\Lambda}k \sin\phi}{16\pi}\sum\limits_{p\neq 0}\int\limits_{e^{-\delta y}V^2}^\infty t^{-2}e^{-\frac{\pi^2 p^2}{t}}\frac{\pi^2 p^2}{t}\,dt
=\frac{k}{8\pi}\sin\phi\sum_{p\neq 0}\frac{\cos {2\pi p\Lambda} }{2\pi^2p^2}\\
=\frac{k\sin\phi}{4\pi}\frac{1}{2}\big(\{\Lambda\}^2-\{\Lambda\}+\frac{1}{6}\big),
\end{multline*}
where we have used the Bernoulli polynomial Taylor expansion \eqref{Bernoulli}, 
$$ \sum_{p> 0}^\infty  \frac{\cos(2\pi pa)}{\pi^2 p^2} = B_2(a) = \{a\}^2-\{a\}+\frac{1}{6},$$ 
\end{proof}
Assembling the above results we obtain
 \begin{multline}\label{indprelim}
  \mathrm{ind}_{L^2} D^-=\frac{k}{2}\tr(\{{\Lambda}\}^2-\{{\Lambda}\})
  +\frac{1}{2}\tr\int_{S^2_\infty}\left(\frac{\{{\Lambda}\}}{\pi}iF_{23}^0
 -\frac{iF_{23}^0}{2\pi}\right)d\mathrm{Vol}_{S^2}\\
+\frac{1}{8\pi^2}\int \tr F\wedge F.
\end{multline}
From the asymptotic form of the connection of Theorem~\ref{asyholo} we easily evaluate the boundary contribution, given by the first line of \eqref{indprelim}.  Letting $M=\mathrm{diag}(m_a),$  we have  
$\frac{i}{2\pi}\int_{S^2_\infty}F_{23}^0\,d\mathrm{Vol}_{S^2}
=M-k\Lambda.$  Thus we obtain
\begin{theo}\label{bigind}
 The index of the Dirac operator $D_A$ equals 
 \begin{multline}\label{indfor}
 \mathrm{ind}_{L^2} D^-=
 \mathrm{tr}\left(\frac{k}{2}\{\Lambda\}^2 -\frac{k}{2}\{\Lambda\}-\{\Lambda\}(k\Lambda-M)+\frac{1}{2}(k\Lambda-M)\right)
 \\
+\frac{1}{8\pi^2}\int \tr F\wedge F.
\end{multline}

\end{theo}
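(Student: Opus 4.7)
The plan is to assemble the index from three pieces already in hand: the bulk/asymptotic decomposition of Lemma~\ref{generalind}, the Pontryagin evaluation in Lemma~\ref{Pont}, and the explicit asymptotic contribution computed in the theorem immediately preceding this statement. At this point the heavy heat-kernel and parametrix analysis is done; what remains is to notice one cancellation and evaluate one boundary flux.

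First I would substitute Lemma~\ref{Pont} into Lemma~\ref{generalind} to write
\[
\mathrm{ind}_{L^2}D^+ = -\frac{k\,\mathrm{Rank}(E)}{12} + \frac{1}{8\pi^2}\int \tr F\wedge F + \mathcal{A},
\]
where $\mathcal{A}$ denotes the asymptotic contribution. Reading $\mathcal{A}$ off the preceding theorem, the constant piece $\frac{k}{2}\tr(I/6) = \frac{k\,\mathrm{Rank}(E)}{12}$ inside $\mathcal{A}$ cancels the Pontryagin term exactly, producing the intermediate identity~\eqref{indprelim}.

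Next I would evaluate the remaining flux $\frac{i}{2\pi}\int_{S^2_\infty} F^0_{23}\,d\mathrm{Vol}_{S^2}$ using the explicit gauge supplied by Theorem~\ref{asyholo}. Writing $A = -i\,\mathrm{diag}(a_j)$ with $a_j = \frac{\lambda_j+m_j/r}{2V}(d\tau+\omega) - \frac{m_j}{k}\omega + O(\ln r/r^2)$, computing $dA$, and restricting to the sphere at infinity, I would use $V \to l$, $d\omega = \ast_3 dV$, and the Gauss-law identity $\int_{S^2_\infty}\ast_3 dV = -4\pi k$ to obtain $\frac{i}{2\pi}\int_{S^2_\infty} F^0_{23}\,d\mathrm{Vol}_{S^2} = M - k\Lambda$ in diagonal matrix form. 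The $O(\ln r/r^2)$ corrections decay faster than the $r^2$ area growth and drop out.

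Substituting this flux back into \eqref{indprelim} converts the boundary piece into $\tr\!\big(-\{\Lambda\}(k\Lambda - M) + \frac{1}{2}(k\Lambda - M)\big)$, and combining with $\frac{k}{2}\tr(\{\Lambda\}^2-\{\Lambda\})$ together with the $F\wedge F$ term reproduces exactly the formula displayed in the theorem. The main subtlety is purely bookkeeping in the flux computation: the contributions of the $\lambda_j$-piece and the $m_j$-piece of $a_j$ must be carefully disentangled (the former arising through $V\to l$ and the monopole structure of $d\omega$, the latter through the $-\frac{m_j}{k}\omega$ coefficient), and one must verify that they combine into the single matrix $M - k\Lambda$ consistent with the Chern-number interpretation appearing at the end of the proof of Theorem~\ref{asyholo}.
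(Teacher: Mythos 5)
Your proposal is correct and follows essentially the same route as the paper: assemble Lemma~\ref{generalind} with Lemma~\ref{Pont}, note that the $\frac{k}{12}\mathrm{Rank}(E)$ constant in the boundary-contribution theorem cancels the Pontryagin term to give \eqref{indprelim}, and then evaluate the flux $\frac{i}{2\pi}\int_{S^2_\infty}F^0_{23}\,d\mathrm{Vol}_{S^2}=M-k\Lambda$ from the explicit asymptotic gauge of Theorem~\ref{asyholo}. Your flux bookkeeping (via $V\to l$, $d\omega=\ast_3 dV$, and $\int_{S^2_\infty}\ast_3 dV=-4\pi k$) reproduces exactly the paper's one-line evaluation, so there is nothing missing.
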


Let's apply this formula to the abelian instanton on TN$_1$. It is given by $A=-i\frac{s}{2V}(d\tau+\omega)$, with curvature $F=dA=-i(d(s/2V)\wedge (d\tau+\omega)+(s/2V)d\omega)$. Therefore, $\Lambda=s/\ell , M=0,$ and $\frac{1}{8\pi^2}\int F\wedge F=\frac{1}{2}(s/\ell )^2$. The above index formula reduces to $\lfloor s/\ell \rfloor(\lfloor s/\ell \rfloor+1)/2,$  in agreement with \cite{Pope2}, where the solutions of the Dirac operator on TN$_1$ in this background were explicitly found. These solutions were  studied more recently in  \cite{Jante:2013kha, Jante:2015xra}.

As another illustration, consider a Whitney sum $\oplus_{j=1}^n L_j$ of line bundles with  abelian connection one-forms $-i a_j=-i \frac{H_j}{V}(d\tau+\omega)+\pi^*_k(\eta_j)$ on $L_j,$ with $H_j=\lambda_j+\sum_\sigma\frac{v_{j\sigma}}{2 r_\sigma}$ and $d\eta_j=*_3dH_j.$  Thus, we have an instanton connection one-form $A=-i\mathrm{diag}(a_j).$ Its second Chern character value is
$
\frac{1}{8\pi^2}\int F_A\wedge F_A=\frac{1}{2}\left(
k\Lambda^2-2\Lambda M+\mathrm{diag}(\sum_\sigma (v_{j\sigma})^2)
\right),
$
giving
\begin{align}\label{last}
 \mathrm{ind}_{L^2} D^-=\sum_{j=1}^n\sum_{\sigma=1}^k
  \frac{\left(\lfloor\lambda_j/\ell \rfloor-v_{j\sigma}\right)\left(\lfloor\lambda_j/\ell \rfloor-v_{j\sigma}+1\right)}{2}.
\end{align}

\section*{Acknowledgements}
We thank the anonymous referee for his or her numerous suggesions for improving our exposition. 
SCh is grateful to the Institute for Advanced Study, Princeton and to the Institute des Hautes \'Etudes Scientifiques for their hospitality and support during various stages of this work; he also thanks the Berkeley Center for Theoretical Physics for hospitality during its completion.  The work of SCh was partially supported by the Simons Foundation grant {\footnotesize\#}245643. The work of MAS was partially supported by the Simons Foundation grant {\footnotesize\#}353857 and NSF grant DMS 1005761. The work of ALH was supported by an NSF Alliance Postdoctoral Fellowship.

\bibliographystyle{amsalpha}

\providecommand{\bysame}{\leavevmode\hbox to3em{\hrulefill}\thinspace}
\providecommand{\MR}{\relax\ifhmode\unskip\space\fi MR }
\providecommand{\MRhref}[2]{%
  \href{http://www.ams.org/mathscinet-getitem?mr=#1}{#2}
}
\providecommand{\href}[2]{#2}

\end{document}